\theoremstyle{plain}
\newtheorem{theorem}{Theorem}
\newtheorem*{conjecture}{Conjecture}
\newtheorem*{sconjecture}{Summary Conjecture}
\newtheorem{lemma}{Lemma}
\newtheorem{proposition}{Proposition}
\theoremstyle{remark}
\DeclareMathOperator{\csch}{csch}
\newcommand{\tre}{\text{Re}}
\newcommand{\tim}{\text{Im}}
\newcommand{\pp}{{\prime\prime}}
\theoremstyle{definition}
\newtheorem*{hypothesis}{Hypothesis D}
\begin{document}
\title[Zhang's Eta Function]{Level curves for Zhang's eta function}
\author{Jeffrey Stopple}
\address{Department of Mathematics\\University of California, Santa Barbara\\Santa Barbara, CA 93106-3080 USA}
\begin{abstract}
Study of the level curve $\tre(\eta(s))=0$ for $\eta(s)=\pi^{-s/2}\Gamma(s/2)\zeta^\prime(s)$ gives a new classification of the zeros of $\zeta(s)$ and of $\zeta^\prime(s)$.  We  conjecture that for  type 2 zeros,  $\liminf (\beta^\prime -1/2)\log\gamma^\prime = 0 \Leftrightarrow \liminf (\gamma^+-\gamma^-)\log \gamma^\prime=0$, and reduce the conjecture to a lower bound on the curvature of the level curve.  We compute and classify $10^6$ zeros of $\zeta^\prime(s)$ near $T=10^{10}$.  The Riemann Hypothesis is assumed throughout.  An appendix develops the analogous classification for characteristic polynomials of unitary matrices.
\end{abstract}
\email{stopple@math.ucsb.edu}
\keywords{Zeros of the Riemann zeta function, zeros of the derivative of the Riemann zeta function, zeros of characteristic polynomials of unitary matrices}
\subjclass[2010]{11M06,15B52, 30C15}

\maketitle

 \baselineskip=16pt

\subsection*{Introduction}

The horizontal distribution of the zeros of $\zeta^\prime$ has been studied by many authors since Levinson and Montgomery \cite{LandM}.  In \cite{Sou}, Soundararajan made the following conjecture:  Assume the Riemann Hypothesis.  Then
\[
\liminf (\beta^\prime -1/2)\log\gamma^\prime = 0 \Leftrightarrow \liminf (\gamma^+-\gamma^-)\log \gamma^-=0,
\]
where $1/2+i\gamma^-$ and $1/2+i\gamma^+$ denote consecutive zeros on the critical line, and $\beta^\prime+i\gamma^\prime$ denotes a typical zero of $\zeta^\prime(s)$.  In \cite{Zhang}, Yitang Zhang proved the $\Leftarrow$ half of the conjecture.  Partial results in the other direction include
\[
\liminf (\beta^\prime -1/2)(\log\gamma^\prime)^3 = 0 \Rightarrow \liminf (\gamma^+-\gamma^-)\log \gamma^-=0,
\]
due to Fan Ge \cite{FanGe2}.  If one also assumes the zeros are simple, results of Garaev and Yildirim in \cite{Garaev} can be interpreted to say that.
\[
\liminf (\beta^\prime -1/2)\log\gamma^\prime (\log\log\gamma^\prime)^2= 0 \Rightarrow \liminf (\gamma^+-\gamma^-)\log \gamma^-=0.
\]

In this paper, rather than change the scaling, we look instead at an infinite subset of the zeros.  The starting point is the function
\[
\eta(s)=h(s)\zeta^\prime(s),\quad\text{ where }\quad h(s)=\pi^{-s/2}\Gamma(s/2),
\]
so named by Zhang in \cite{Zhang}. This function has an interesting property with respect to the zeros of $\zeta(s)$ on the critical line:
\begin{lemma} (\cite[Lemma 1]{Zhang}) Suppose $t>7$.  Then  we have $\zeta(1/2+it)=0$  if and only if $\tre(\eta(1/2+i t))=0$.
\end{lemma}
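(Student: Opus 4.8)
The plan is to reduce the claim to the classical reality properties of the completed zeta function, together with one elementary estimate on the archimedean factor. Set
\[
\xi_0(s) \;=\; h(s)\zeta(s) \;=\; \pi^{-s/2}\Gamma(s/2)\zeta(s),
\]
the symmetric completion of $\zeta$: it is meromorphic with simple poles only at $s=0,1$ and satisfies the functional equation $\xi_0(s)=\xi_0(1-s)$. Since $\xi_0$ is real on the real axis away from its poles, Schwarz reflection gives $\overline{\xi_0(s)}=\xi_0(\bar s)$, and likewise for $\xi_0'$. On the critical line this yields $\overline{\xi_0(\tfrac12+it)}=\xi_0(\tfrac12-it)=\xi_0(\tfrac12+it)$, so $\xi_0(\tfrac12+it)$ is real; differentiating $\xi_0(s)=\xi_0(1-s)$ gives $\xi_0'(s)=-\xi_0'(1-s)$, hence $\overline{\xi_0'(\tfrac12+it)}=\xi_0'(\tfrac12-it)=-\xi_0'(\tfrac12+it)$, so $\xi_0'(\tfrac12+it)$ is purely imaginary. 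Finally I would record the identity, immediate from $\xi_0'=h'\zeta+h\zeta'$,
\[
\eta(s)\;=\;h(s)\zeta'(s)\;=\;\xi_0'(s)-\frac{h'(s)}{h(s)}\,\xi_0(s).
\]

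Evaluating on $s=\tfrac12+it$ and taking real parts, the term $\xi_0'(\tfrac12+it)$ drops out, since $\tre\,\xi_0'(\tfrac12+it)=0$, and $\xi_0(\tfrac12+it)$ is real, so
\[
\tre\,\eta\bigl(\tfrac12+it\bigr)\;=\;-\,\xi_0\bigl(\tfrac12+it\bigr)\,\tre\frac{h'}{h}\bigl(\tfrac12+it\bigr).
\]
Because $h(\tfrac12+it)=\pi^{-(1/2+it)/2}\Gamma(\tfrac14+\tfrac{it}{2})$ is finite and nonzero for every real $t$ ($\Gamma$ has no zeros and $\tfrac14+\tfrac{it}{2}$ is never a pole), we have $\xi_0(\tfrac12+it)=0\Leftrightarrow\zeta(\tfrac12+it)=0$. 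Thus the lemma is equivalent to the single remaining assertion: $\tre(h'/h)(\tfrac12+it)\ne 0$ for $t>7$.

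For that step I would use $(h'/h)(s)=-\tfrac12\log\pi+\tfrac12\psi(s/2)$ with $\psi=\Gamma'/\Gamma$, so that $\tre(h'/h)(\tfrac12+it)=-\tfrac12\log\pi+\tfrac12\,\tre\,\psi(\tfrac14+\tfrac{it}{2})$. The Weierstrass series for $\psi$ gives
\[
\tre\,\psi\bigl(\tfrac14+iy\bigr)\;=\;-\gamma+\sum_{n\ge 0}\left(\frac1{n+1}-\frac{n+\tfrac14}{(n+\tfrac14)^2+y^2}\right),
\]
and each summand is strictly increasing in $y>0$; hence $\tre(h'/h)(\tfrac12+it)$ is strictly increasing in $t>0$. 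It therefore suffices to verify the one inequality $\tre\,\psi(\tfrac14+\tfrac{7i}{2})>\log\pi$, which follows by summing a handful of terms of the series and bounding the (convergent) tail. I expect this numerical check to be the only genuinely quantitative obstacle; everything else is soft.

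The constant $7$ is not sharp: $\tre(h'/h)(\tfrac12+it)$ has a unique zero for $t>0$, near $t\approx 6.3$, which is still well below the first zero $t\approx 14.13$ of $\zeta$ on the line — and that is precisely why a hypothesis of this type cannot be dropped. Note also that the implication ``$\zeta(\tfrac12+it)=0\Rightarrow\tre\,\eta(\tfrac12+it)=0$'' uses only the identity above and the purely-imaginary property of $\xi_0'$, with no restriction on $t$; the bound $t>7$ is forced only by the converse direction.
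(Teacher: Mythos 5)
Your argument is correct and is essentially the proof of the cited source: the paper only quotes the result from Zhang, and Zhang's own proof proceeds exactly via the decomposition $\eta=\Lambda'-(h'/h)\Lambda$ with $\Lambda=h\zeta$ real and $\Lambda'$ purely imaginary on the critical line, reducing the lemma to the positivity of $\tre\,(h'/h)(\tfrac12+it)$ for $t>7$. The one quantitative step you defer, $\tre\,\psi(\tfrac14+\tfrac{7i}{2})>\log\pi$, does hold (the left side is about $1.25$ versus $\log\pi\approx 1.145$, consistent with $\tre\,(h'/h)(\tfrac12+it)=\tfrac12\log(t/2\pi)+O(1/t^2)$ changing sign near $t=2\pi$), so your reduction closes.
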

The lemma makes the level curves for $\tre(\eta(s))=0$ of interest.   Figure \ref{F:2} shows examples, where we use color to indicate the sign of $\tim(\eta(s))$.  
Green indicates $\tre(\eta(s))=0$ and $\tim(\eta(s))>0$, while purple indicates $\tre(\eta(s))=0$ and $\tim(\eta(s))<0$.

For shorthand when referring to \lq the zeros\rq\  of $\zeta(s)$ we mean the nontrivial zeros in the upper half plane.   The Riemann zeros $\rho=1/2+i\gamma$ of $\zeta(s)$ occur where the green and purple contours cross the critical line.  The zeros $\rho^\prime$ of $\zeta^\prime$ are visible everywhere the two colors come together (exclusive of the double pole at $s=1$).

\begin{figure}
\begin{center}
\includegraphics[scale=1, viewport=0 0 350 300,clip]{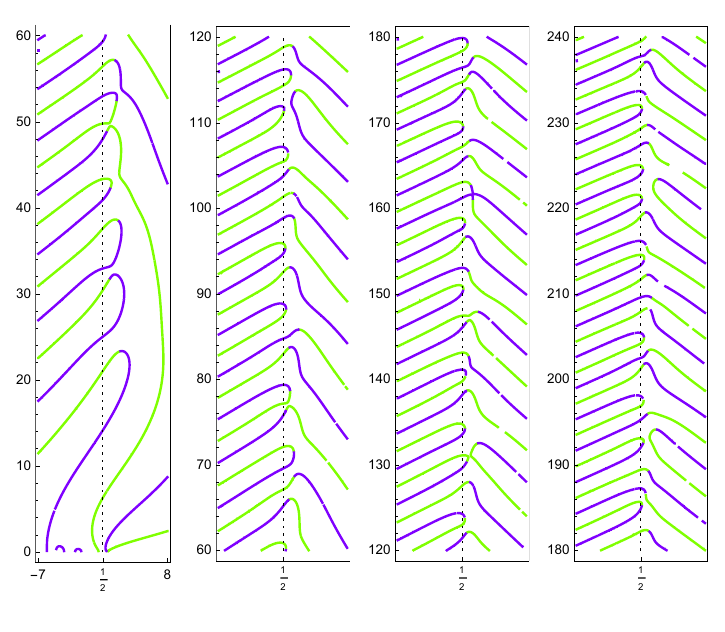}
\caption{Level curves for $\tre(\eta(s))=0$, $-7\le\sigma\le 8$, $0\le t\le 240$}\label{F:2}
\end{center}
\end{figure}

Throughout we assume the Riemann Hypothesis, in the strong form that the nontrivial zeros are also simple.
We also need to assume the following:
\begin{hypothesis}   The level curves $\tre(\eta(s))=0$ are differentiable.  This is automatic except at isolated points where $\eta^\prime(s)=0$, so we are really assuming that when $\eta^\prime(s)=0$, $\arg(\eta(s))\ne\pm\pi/2$.  This prevents the level curves from branching.   Hypothesis D is  plausible because $\pm\pi/2$ are only two points on the unit circle, while the zeros of $\eta^\prime(s)$ form a countable set.
\end{hypothesis}

\noindent
Here's a summary of the sections of the paper:
\begin{enumerate}
\item[\S\ref{S:classify}] Classification of the zeros of $\zeta(s)$ and $\zeta^\prime(s)$ into different types by means of the level curves, and results on the asymptotics of the types.  
\item[\S\ref{S:data}] Computation and classification of  $10^6$ zeros of $\zeta^\prime(s)$ near $T=10^{10}$.
\item[\S\ref{S:lemmas}] Two lemmas.
\item[\S\ref{S:Type2}] A closer look at the type 2 zeros, and the curvature of the level curve.
\item[\S\ref{S:Spira}] A canonical bijection between the complex zeros $\rho^\prime$ of $\zeta^\prime(s)$, and the complex zeros $\rho^\pp$ of $\zeta^{\prime\prime}(s)$.
\item[\S\ref{S:Marden}]  Adaptation of a theorem of Marden, and the location of $\rho^\pp$ relative to $\rho^\prime$.
\item[\S\ref{S:application}] Curvature of the level curve at $\rho^\prime$ in terms of all the other zeros $\lambda^\prime\ne\rho^\prime$.
\item[\S\ref{S:Appendix}]  Appendix: With $p_A(z)$ the characteristic polynomial of a unitary matrix $A$, we give a classification of the zeros of $p_A(z)$ and $p_A^\prime(z)$ analogous to that in \S\ref{S:classify}.
\end{enumerate}

\section{Classification of zeros}\label{S:classify}

\begin{proposition} With the usual indexing $\gamma_1<\gamma_2<\ldots$ of the imaginary parts of the zeros of $\zeta(s)$, every odd indexed zero lies on a contour $\tim(\eta(s))<0$.  Every even indexed zero lies on a contour $\tim(\eta(s))>0$. 
\end{proposition}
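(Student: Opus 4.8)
The plan is to reduce everything to the Hardy $Z$-function and then chase signs. Recall $Z(t)=e^{i\theta(t)}\zeta(1/2+it)$, where $\theta(t)=\arg\Gamma(1/4+it/2)-\tfrac t2\log\pi$ is the Riemann--Siegel theta function with $\theta(0)=0$; then $Z$ is real-valued on the real axis, $\theta'(t)>0$ for $t>7$, and $h(1/2+it)=\pi^{-1/4}\,|\Gamma(1/4+it/2)|\,e^{i\theta(t)}$. First I would differentiate $\zeta(1/2+it)=Z(t)e^{-i\theta(t)}$, using $\tfrac{d}{dt}\zeta(1/2+it)=i\,\zeta'(1/2+it)$, to obtain
\[
\zeta'(1/2+it)=-\bigl(\theta'(t)Z(t)+iZ'(t)\bigr)e^{-i\theta(t)}.
\]
Multiplying by $h(1/2+it)$ the oscillating factor $e^{\pm i\theta(t)}$ cancels, leaving
\[
\eta(1/2+it)=-\pi^{-1/4}\,|\Gamma(1/4+it/2)|\,\bigl(\theta'(t)Z(t)+iZ'(t)\bigr).
\]
Writing $C(t)=\pi^{-1/4}|\Gamma(1/4+it/2)|>0$, this says $\tre(\eta(1/2+it))=-C(t)\theta'(t)Z(t)$ and $\tim(\eta(1/2+it))=-C(t)Z'(t)$. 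Since $\theta'(t)>0$ for $t>7$ this recovers Zhang's Lemma~1, and at a zero $\gamma_n$, where $Z(\gamma_n)=0$, it gives $\eta(1/2+i\gamma_n)=-iC(\gamma_n)Z'(\gamma_n)$, so $\operatorname{sign}\tim(\eta(1/2+i\gamma_n))=-\operatorname{sign}Z'(\gamma_n)$.

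It then remains to determine the sign of $Z'$ at the zeros. Under the Riemann Hypothesis the real zeros of $Z$ on $(0,\infty)$ are exactly the ordinates $\gamma_1<\gamma_2<\cdots$, and under the simplicity assumption each is a simple zero of $Z$; hence $Z$ changes sign at every $\gamma_n$ and has constant sign on each gap. Because $\theta(0)=\arg\Gamma(1/4)=0$ we have $Z(0)=\zeta(1/2)<0$, and since $\gamma_1>0$, $Z$ is negative on $(0,\gamma_1)$; an immediate induction gives that $Z$ has sign $(-1)^{n+1}$ on $(\gamma_n,\gamma_{n+1})$. Thus $Z$ passes from sign $(-1)^{n}$ on $(\gamma_{n-1},\gamma_n)$ to sign $(-1)^{n+1}$ on $(\gamma_n,\gamma_{n+1})$, so $\operatorname{sign}Z'(\gamma_n)=(-1)^{n+1}$, and therefore $\operatorname{sign}\tim(\eta(1/2+i\gamma_n))=(-1)^{n}$. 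That is precisely the assertion: odd-indexed zeros lie on a contour with $\tim(\eta)<0$, even-indexed zeros on one with $\tim(\eta)>0$.

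There is no single hard step; the whole argument rests on the clean cancellation giving the boxed formula for $\eta$ on the critical line. The two points that need care are: (i) justifying that $Z$ has no sign changes other than at the $\gamma_n$ — this is exactly where RH (no zeros off the line) and simplicity (no even-order zeros of $Z$) are used; and (ii) fixing the base case of the sign induction, namely $Z(0)=\zeta(1/2)<0$ together with $\theta'>0$ on the relevant range, so that the statement about \emph{contours} (and not merely a two-coloring) holds as written. It is also worth noting that the same formula makes the green/purple dichotomy transparent, the color at $\gamma_n$ being recorded by whether $Z$ is increasing or decreasing through its $n$-th zero.
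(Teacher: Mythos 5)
Your proof is correct, and it takes a genuinely different route from the paper's. The paper deduces the alternation from Lemma~2 (the argument of $\eta(1/2+it)$ is strictly decreasing for $t>4$) together with the fact that $\int_{\gamma_n}^{\gamma_{n+1}}F(t)\,dt=\pi$, so that $\arg\eta$ drops by exactly $\pi$ between consecutive ordinates and hence alternates between $\pi/2$ and $-\pi/2$ modulo $2\pi$; the parity is then pinned down by a single numerical evaluation of $\eta(1/2+i\gamma_1)$. You instead derive the closed form $\eta(1/2+it)=-C(t)\bigl(\theta'(t)Z(t)+iZ'(t)\bigr)$ with $C(t)>0$, which reduces the color at $1/2+i\gamma_n$ to $-\operatorname{sign}Z'(\gamma_n)$, and you determine that sign by the standard sign-change induction anchored at $Z(0)=\zeta(1/2)<0$. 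What your approach buys: it is self-contained and quantitative, it dispenses with both the monotonicity lemma (which rests on Zhang's and Ge's analysis of the zeros of $\zeta'$) and the numerical anchor at $\gamma_1$ (replaced by the classical inequality $\zeta(1/2)<0$), it recovers Zhang's Lemma~1 along the way, and it makes the green/purple coloring transparent as the direction in which $Z$ crosses its $n$-th zero. What the paper's route buys in context: the monotonicity of $\arg\eta$ is needed anyway for Theorem~\ref{Thm:classification}, so the proposition comes essentially for free once that machinery is in place. Two minor points to make explicit in your writeup: the base case also uses the unconditional fact that $\zeta(1/2+it)\neq 0$ for $0<t<\gamma_1$ (so $Z$ has no sign change before its first listed zero), and the standing simplicity assumption is what guarantees $Z'(\gamma_n)\neq 0$, hence that $\eta(1/2+i\gamma_n)\neq 0$ and the color is well defined; your worry about $\theta'>0$ is needed only to identify $\{\tre\,\eta=0\}$ with $\{Z=0\}$ on the line, which is the paper's Lemma~1 in any case.
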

\begin{proof}
This follows from Lemma \ref{Lemma:ZhangGe} below, which says that as $t$ increases, the argument of $\eta(1/2+it)$ decreases by exactly $\pi$ between consecutive zeros.  A \emph{Mathematica} calculation of $\eta(1/2+i\gamma_1)$ determines the parity of all the zeros.
\end{proof}

\subsubsection*{Zeros of $\zeta^\prime(s)$}
\begin{itemize}
\item[Type 0:] We will say a zero $\rho^\prime$ of $\zeta^\prime(s)$ is of \textsc{type 0} if \emph{neither} of the level curves $\tre(\eta(s))=0, \tim(\eta(s))>0$ and $\tre(\eta(s))=0, \tim(\eta(s))<0$ exiting $\rho^\prime$ cross the critical line $\sigma=1/2$.
\item[Type 1:] We will say a zero $\rho^\prime$ of $\zeta^\prime(s)$ is of \textsc{type 1} if \emph{exactly one} of the level curves $\tre(\eta(s))=0, \tim(\eta(s))>0$ and $\tre(\eta(s))=0, \tim(\eta(s))<0$ exiting $\rho^\prime$ crosses the critical line $\sigma=1/2$.
\item[Type 2:] We will say a zero $\rho^\prime$ of $\zeta^\prime(s)$ is of \textsc{type 2} if the level curves $\tre(\eta(s))=0, \tim(\eta(s))>0$ and $\tre(\eta(s))=0, \tim(\eta(s))<0$ exiting $\rho^\prime$ \emph{both} cross the critical line $\sigma=1/2$.
\end{itemize}

\subsubsection*{Zeros of $\zeta(s)$}
\begin{itemize}
\item[Type 1:] We will say a zero $\rho=1/2+i\gamma$ of $\zeta(s)$ is of \textsc{type 1} if the level curve $\tre(\eta(s))=0$ on which it lies, terminates in a zero $\rho^\prime$ which is of type 1.
\item[Type 2:] We will say a zero $\rho=1/2+i\gamma$ of $\zeta(s)$ is of \textsc{type 2} if the level curve $\tre(\eta(s))=0$ on which it lies, terminates in a zero $\rho^\prime$ which is of type 2.
\end{itemize}

In Figure \ref{F:2}, when both branches form a loop to the left, it is type 2.  When they loop to the right, it is type 0.  If the two colors extend in opposite directions without looping, it is type 1.  In Figure \ref{F:2}, the first four zeros of $\zeta^\prime(s)$ have type 2; the next four alternate between types 1 and 2.  The first zero of type 0 occurs at height about 113, with another at height about 132.  At height about 161 we have two consecutive zeros of type 1, but from the way the graphics are imported into Latex one can not tell, looks like it might be a type 2 and type 0.  It seems a zero of $\eta^\prime(s)$ is nearby.  

Let
\[
N_1(T)=\sharp\left\{\text{type 1 zeros }1/2+i\gamma\,|\,0<\gamma<T\right\}.
\]
NB: This is a not the traditional definition of $N_1(T)$.
Let
\[
N_2(T)=
\sharp\left\{\text{pairs of type 2 zeros }1/2+i\gamma^-,1/2+i\gamma^+\,|\,0<\gamma^+<T\right\}.
\]
For $j=0,1,2$, let
\[
N_j^\prime(T)=\sharp\left\{\text{zeros }\rho^\prime=\beta^\prime+i\gamma^\prime\text{ of type }j\,|\,0<\gamma^\prime<T\right\}.
\]

\begin{theorem}\label{Thm:classification}
Every Riemann zero is of either type 1 or type 2.  Thus we have a canonical mapping from the zeros of $\zeta(s)$ to those of $\zeta^\prime(s)$, which is two to one on the type 2 zeros, and one to one on the type 1 zeros.  Zeros of $\zeta^\prime(s)$ of type 0 are precisely those not in the image of this mapping.  The Riemann zeros of type 2 are canonically grouped in pairs.

There are infinitely many type 2 zeros of $\zeta(s)$ and of $\zeta^\prime(s)$, and in fact
\[
N_2(T)=N_2^\prime(T)\gg T.
\]
At least one of the other types of zeros of $\zeta^\prime(s)$ is infinite in number.
\end{theorem}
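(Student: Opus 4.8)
I would construct a canonical map $\Phi$ from the Riemann zeros to the zeros of $\zeta'$ by following level curves, determine its combinatorics, and feed that into the classical counting functions for $\zeta$ and $\zeta'$. \textbf{Local structure and zero-free regions.} Since $h(s)=\pi^{-s/2}\Gamma(s/2)$ is holomorphic and zero-free for $\sigma>0$, in that half-plane the zeros of $\eta$ are exactly those of $\zeta'$ and the only pole is the double pole at $s=1$; in particular $\eta$ is holomorphic and pole-free in $\sigma>0,\ t>7$. By RH and Speiser's theorem $\zeta'\ne 0$ in $0<\sigma<1/2$; with simplicity of the Riemann zeros and Lemma~1 of \cite{Zhang} this extends to $\sigma=1/2$, $t>7$; and the series $\zeta'/\zeta(s)=-\sum\Lambda(n)n^{-s}$ shows $\zeta'\ne 0$ for $\sigma\ge\sigma_0$, some absolute $\sigma_0$. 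So $\eta$ is holomorphic and zero-free both in $0<\sigma\le 1/2,\ t>7$ and in $\sigma\ge\sigma_0$. By Hypothesis~D the locus $\tre(\eta)=0$ misses every critical point of $\eta$, hence is a smooth $1$-manifold away from the discrete zeros and poles of $\eta$; near a simple zero $\rho'$ of $\zeta'$ it is one smooth arc with one ray ``green'' ($\tim\eta>0$) and the opposite ``purple'' ($\tim\eta<0$), and near a Riemann zero $\rho$ --- where $\eta(\rho)$ is a nonzero purely imaginary number by Lemma~1 of \cite{Zhang} --- it is a monochromatic smooth arc crossing $\sigma=1/2$ transversally (by the strict monotonicity in Lemma~\ref{Lemma:ZhangGe}), green or purple according to the parity of the index of $\rho$ via the Proposition.

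\textbf{The map and its combinatorics.} Let $C$ be the connected component of $\{\tre(\eta)=0\}$ through a Riemann zero $\rho$. Along $C$ the function $\eta$ takes values in $i\mathbb R$ and, being locally injective, is \emph{monotone}; hence $\eta=0$ at most once on $C$, and $\eta(\rho)=iy_0$ (say $y_0>0$, green) is an interior value. Follow $C$ from $\rho$ toward $\eta=0$. Either $C$ reaches a zero $\rho'$ of $\zeta'$, or that end of $C$ runs off to $t\to+\infty$ (Stirling gives $|\eta(\sigma+it)|\to\infty$ as $\sigma\to\pm\infty$ and $|\eta|\to 0$ only as $t\to+\infty$ with $\sigma$ bounded, so when finite‑point termination fails the ``$\eta\to 0$'' end is of this form). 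Granting --- the obstacle below --- that the escape cannot occur when $C$ meets the critical line, set $\Phi(\rho)=\rho'$; near $\rho'$, $C$ is the green branch issuing from $\rho'$, so $\rho'$ is not of type~$0$. Moreover each branch of $\rho'$ crosses $\sigma=1/2$ at most once: if the green branch crossed at two even‑indexed Riemann zeros $\rho_a,\rho_b$, then since $\eta$ is monotone, hence bounded between $\eta(\rho_a)$ and $\eta(\rho_b)$ along the arc joining them, that arc stays in a compact set and, with a critical‑line segment, bounds a compact region whose interior contains an odd number of intervening Riemann zeros; their level curves, trapped there by Speiser's zero‑free region and by the fact that level curves of $\tre\eta$ cannot cross one another, would have to pair off among themselves --- impossible for an odd number. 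Hence at most two Riemann zeros map to any $\rho'$: exactly two (forming the canonical pair) when $\rho'$ is of type~$2$, exactly one when $\rho'$ is of type~$1$, and none when $\rho'$ is of type~$0$; conversely every type~$1$ or type~$2$ zero is hit, by following its critical‑line‑crossing branch back. This gives the first paragraph of the theorem; consecutiveness of the pair (``loop to the left'' in Figure~\ref{F:2}) is deferred to \S\ref{S:Type2}.

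\textbf{Counting.} With the classical asymptotics
\[
N(T)=\tfrac{T}{2\pi}\log\tfrac{T}{2\pi}-\tfrac{T}{2\pi}+O(\log T),\qquad N'(T)=\tfrac{T}{2\pi}\log\tfrac{T}{4\pi}-\tfrac{T}{2\pi}+O(\log T)
\]
(see \cite{LandM}, \cite{Sou}) and the fact that $\Phi$ displaces a zero vertically by a bounded amount (a byproduct of the curve control), counting $\Phi^{-1}$ of the zeros of $\zeta'$ below height $T$ gives $N(T)=2N_2'(T)+N_1'(T)+o(T)$, while $N'(T)=N_0'(T)+N_1'(T)+N_2'(T)$. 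Subtracting, $N_2'(T)-N_0'(T)=N(T)-N'(T)+o(T)=\tfrac{T}{2\pi}\log 2+o(T)$, so $N_2'(T)\gg T$; and since the type~$2$ zeros of $\zeta'$ biject with the type~$2$ pairs of Riemann zeros, $N_2(T)=N_2'(T)+O(1)\gg T$. Forming $2N'(T)-N(T)$ instead gives $2N_0'(T)+N_1'(T)=\tfrac{T}{2\pi}\log\tfrac{T}{8\pi e}+o(T)\to\infty$, so at least one of $N_0'(T),N_1'(T)$ is unbounded.

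\textbf{The main obstacle.} The one genuinely analytic step is ruling out level curves through Riemann zeros escaping to $t\to+\infty$ --- equivalently, that such a curve truly \emph{terminates} at a zero of $\zeta'$, and does so within $O(1)$ (or at worst $O(\log T)$) in height, which also secures the ``bounded vertical displacement'' used above. I would do this by an argument‑principle count of the arcs of $\{\tre(\eta)=0\}$ meeting the edges of a long thin horizontal box $\{\tfrac12<\sigma<\sigma_0,\ T_1<t<T_2\}$: by the zero‑free regions the only arcs terminating inside do so at its finitely many zeros $\rho'$, so balancing crossings of the left edge $\sigma=\tfrac12$ (Riemann zeros, counted by Lemma~\ref{Lemma:ZhangGe}) against those of the right edge $\sigma=\sigma_0$ and of the short horizontal edges forces every arc that leaves the critical line to reach a nearby zero of $\zeta'$. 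The finitely many lowest zeros, where the box would dip to $t=7$, are dispatched by direct computation, as the parities were in the Proposition.
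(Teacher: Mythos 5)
Your overall architecture matches the paper's: define the map by following level curves, rule out pathological behaviour of those curves, then combine $N(T)$ with the count of zeros of $\zeta^\prime$ to get the asymptotics. The counting step is essentially identical to the paper's (the same two linear combinations of the two counting functions), and your parity/Jordan-curve argument that a single branch meets the critical line at most once is a workable alternative to the paper's, which instead observes that since $\arg\eta(1/2+it)$ is strictly decreasing, every crossing of the critical line by a component of $\{\arg\eta=\pm\pi/2\}$ has the same orientation, whereas consecutive crossings of a line by an embedded arc must alternate.

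The genuine gap is exactly the step you flag as ``the main obstacle'': showing that the branch leaving a Riemann zero cannot escape to $t\to+\infty$ but must terminate at a zero of $\zeta^\prime$. You defer this to a box-counting argument, but as sketched it does not close: to balance crossings of the boundary of $\{\tfrac12<\sigma<\sigma_0,\ T_1<t<T_2\}$ you need control of how often $\{\tre(\eta)=0\}$ meets the right edge and the two horizontal edges, and nothing in your setup supplies that --- a priori a single component could cross a horizontal segment arbitrarily many times. This is precisely the content the paper adds in \S\ref{S:lemmas}: Lemma~\ref{Lemma:ZhangGe3} shows $\arg\eta(4+it)$ is \emph{increasing} in $t$ (via explicit Dirichlet-series bounds on $\zeta^{\prime\prime}/\zeta^\prime$ and Stirling for $h^\prime/h$), the opposite monotonicity to that on the critical line from Lemma~\ref{Lemma:ZhangGe}. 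Since the curves $\arg\eta=c$ foliate the region away from zeros of $\eta$ and cannot cross one another, a contour meeting both vertical lines would force the foliation to reverse its vertical ordering between them, which can only happen at a zero of $\zeta^\prime$; hence the branch must hit such a zero before reaching $\sigma=4$. Without Lemma~\ref{Lemma:ZhangGe3} or an equivalent input on a vertical line to the right, the canonical map --- and with it the identity $N_1(T)+2N_2(T)=N(T)$ on which all of your asymptotics rest --- is not established. (A smaller unproven assertion of the same kind: the ``bounded vertical displacement'' of $\Phi$, which you correctly note is needed to equate $N_j(T)$ with $N_j^\prime(T)$ up to $o(T)$, also depends on this curve control; the paper passes over that point silently.)
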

\begin{proof}
Regarding the mapping, all this is clear except the first statement, which says that the contours which cross the critical line from the left must terminate in exactly one zero of $\zeta^\prime(s)$.  Since we are assuming Hypothesis D, the alternatives we must rule out are continuation of the contour on to the right, or looping back to the left.  

For the first possibility, note that the contour $\arg(\eta(s))=\pi/2$ (resp. $\arg(\eta(s))=-\pi/2$) does not exist in isolation; it is part of a continuum which deform smoothly as the argument is varied. 
 By  Lemma \ref{Lemma:ZhangGe} at the end of this section, the argument is decreasing along the contour $s=1/2+it$, as $t>4$ increases.
But the argument of $\eta(s)$ is increasing along the contour $4+it$ as $t$ increases, by Lemma \ref{Lemma:ZhangGe3} and the subsequent remark.  To change the orientation, the contours must cross over each other, and  this can only happen where the argument of $\eta(s)$ is undefined, at a zero $\rho^\prime$. 
  
The second possibility is ruled out by Lemma \ref{Lemma:ZhangGe}, which says that the argument of $\eta(s)$ decreases monotonically as one moves up the critical line.

Regarding the asymptotics, we have (due to Littlewood, as we are assuming the Riemann Hypothesis) 
\begin{equation}\label{Eq:1}
N_1(T)+2N_2(T)=\frac{T}{2\pi}\log\left(\frac{T}{2\pi}\right)-\frac{T}{2\pi}+O\left(\log T/\log\log T\right).
\end{equation}
Via the mapping, $N_1(T)=N_1^\prime(T)$ and $N_2(T)=N_2^\prime(T)$.
Thus we have from \cite{FanGe1}:
\begin{equation}\label{Eq:2}
N_0^\prime(T)+N_1(T)+N_2(T)=\frac{T}{2\pi}\log\left(\frac{T}{4\pi}\right)-\frac{T}{2\pi}+O\left(\log T/\log\log T\right).
\end{equation}
Subtracting (\ref{Eq:2}) from  (\ref{Eq:1}) gives
\begin{equation}\label{Eq:3}
N_2(T)-N_0^\prime(T)=\frac{T}{2\pi}\log\left(2\right)+O\left(\log T/\log\log T\right).
\end{equation}
Subtracting (\ref{Eq:1}) from twice (\ref{Eq:2}) gives
\begin{equation}\label{Eq:4}
N_1(T)+2N_0^\prime(T)=\frac{T}{2\pi}\log\left(\frac{T}{8\pi}\right)-\frac{T}{2\pi}+O\left(\log T/\log\log T\right).
\end{equation}
\end{proof}
 
 \section{Data}\label{S:data}
 
In \emph{Mathematica} we computed and classified 1,001,390 zeros of $\zeta^\prime(s)$ near  $T=10^{10}$.  We found 239,556 zeros of type 0 (23.9\%), 488,412 zeros of type 1 (48.8\%), and 273,422 zeros of type 2 (27.3\%). 

We make the following conjecture:
\begin{conjecture}  For some constant $C$, possibly equal $0$
\begin{align*}
N_0^\prime(T)=&\frac{1}{8\pi} T\log\left(\frac{T}{4\pi}\right)+\left(C-\frac{\log 2}{4\pi} \right)\cdot T+O(\log T/\log\log T)\\
N_1(T)=&\frac{1}{4\pi} T\log\left(\frac{T}{4\pi}\right)-\left(2C+\frac{1}{2\pi}\right)\cdot T+O(\log T/\log\log T)\\
N_2(T)=&\frac{1}{8\pi}T\log\left(\frac{T}{4\pi}\right)+\left(C+\frac{\log 2}{4\pi} \right)\cdot T+O(\log T/\log\log T).
\end{align*}
\end{conjecture}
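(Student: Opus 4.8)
The plan is to reduce all three formulas to a single asymptotic for $N_2$, and then to attack that one count directly. Equations (\ref{Eq:1}) and (\ref{Eq:2}) already pin down, with error $O(\log T/\log\log T)$, the two combinations $N_1(T)+2N_2(T)$ (the total number of Riemann zeros up to $T$) and $N_0'(T)+N_1(T)+N_2(T)$ (the total number of zeros of $\zeta'$ up to $T$). These are two independent linear relations among the three counts, so they leave one free parameter in the $T\log T$ term and one in the $T$ term; in particular they do \emph{not} force the leading coefficient of $N_2$ to equal $\tfrac{1}{8\pi}$, and neither do (\ref{Eq:3}),(\ref{Eq:4}), which are derived from them. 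My first step is the elementary observation that once one proves
\[
N_2(T)=\frac{1}{8\pi}\,T\log\!\left(\frac{T}{4\pi}\right)+c_2\,T+O\!\left(\log T/\log\log T\right)
\]
for some constant $c_2$, the formulas for $N_1$ and $N_0'$ follow at once by writing $N_1=(N_1+2N_2)-2N_2$ and $N_0'=(N_0'+N_1+N_2)-N_1-N_2$ and substituting (\ref{Eq:1}),(\ref{Eq:2}); the error stays $O(\log T/\log\log T)$ and the value $C:=c_2-\tfrac{\log 2}{4\pi}$ is forced. A short check confirms this choice reproduces both the $N_1$ and the $N_0'$ line exactly. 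Thus the whole conjecture is equivalent to the single asymptotic above, whose content is twofold: the \emph{leading} coefficient $\tfrac{1}{8\pi}$ (equivalently, that type-$2$ zeros have density $1/4$ among the $\rho'$), and the \emph{secondary} constant $c_2$, which is $C$ up to the explicit shift.

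Next I would turn the geometric definition of type $2$ into a summable analytic criterion. By Theorem \ref{Thm:classification}, $N_2(T)$ counts the zeros $\rho'=\beta'+i\gamma'$ of $\zeta'$ whose two level-curve branches both loop left and cross $\sigma=1/2$, equivalently the consecutive pairs $1/2+i\gamma^-,\,1/2+i\gamma^+$ captured by a common $\rho'$. The curvature analysis of \S\ref{S:Type2} and \S\ref{S:application} should furnish a local decision rule: whether the branches emanating from $\rho'$ bend back to the critical line or escape to the right is governed by the curvature of $\tre(\eta(s))=0$ at $\rho'$ together with the offset $\beta'-1/2$, and \S\ref{S:application} expresses that curvature in terms of the remaining zeros $\lambda'\ne\rho'$. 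I would encode this as an indicator $w(\rho')\in\{0,1\}$, so that $N_2(T)=\sum_{0<\gamma'<T}w(\rho')$, and then evaluate the sum using the density of $\zeta'$-zeros implicit in (\ref{Eq:2}) together with the RH-conditional relationship between $\beta'-1/2$ and the neighbouring gap $\gamma^+-\gamma^-$.

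The hard part will be evaluating that sum, because both the leading density $1/4$ and the constant $c_2$ live in the fine regime of the Soundararajan--Zhang correspondence between small $\beta'-1/2$ and small gaps $\gamma^+-\gamma^-$ --- exactly the statistics to which (\ref{Eq:1}) and (\ref{Eq:2}) are insensitive. I expect the predicted value to emerge first from the unitary-matrix model of \S\ref{S:Appendix}, where the analogue of $w$ is computable from eigenvalue statistics: I would compute the CUE average of the type-$0$/$1$/$2$ proportions, which should return $1/4,\,1/2,\,1/4$ and hence the prediction $C=0$. Transferring this to $\zeta(s)$ --- upgrading the CUE computation to a theorem controlling $\sum w(\rho')$ with error $O(\log T/\log\log T)$ --- is the step I expect to lie beyond current technology, since it subsumes sharp forms of the pair-correlation and horizontal-distribution conjectures for $\zeta'$. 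A realistic intermediate deliverable is therefore the reduction above, the random-matrix evaluation of $C$, and rigorous one-sided bounds on the type-$2$ density obtained by bounding $w(\rho')$ via the curvature criterion.
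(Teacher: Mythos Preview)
The statement you are trying to prove is a \emph{conjecture}; the paper offers no proof whatsoever. Its entire justification is one heuristic sentence: each of the two branches of the level curve $\tre(\eta(s))=0$ leaving a zero $\rho'$ is assumed to go left or right with equal probability, independently, giving densities $1/4,\ 1/2,\ 1/4$ for types $0,1,2$; the lower-order $T$-coefficients are then the simplest expressions compatible with (\ref{Eq:2}), (\ref{Eq:3}), (\ref{Eq:4}). Your reduction in the first paragraph --- that everything follows from a single asymptotic for $N_2(T)$ with $C=c_2-\tfrac{\log 2}{4\pi}$ --- is correct and is exactly the content of the paper's remark that the lower-order terms are forced by compatibility; but the paper never claims, let alone proves, the leading coefficient $1/8\pi$.

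Everything from your second paragraph onward is therefore not a proof but a research proposal going well beyond what the paper attempts. The idea of encoding type~2 by an indicator $w(\rho')$ via the curvature sign and then averaging is reasonable as a program, but there is a concrete gap: the curvature formula (\ref{Eq:kappa2}) and Proposition~4 describe the level curve \emph{locally} at $\rho'$, whereas the type classification is a \emph{global} statement about whether each branch eventually reaches $\sigma=1/2$. A branch can have curvature of either sign at $\rho'$ and still be of any type, so $w(\rho')$ is not a function of the local curvature data alone, and the paper provides no criterion that would let you write $w$ as an analytic functional of the $\lambda'$. Your own concluding assessment --- that the transfer from CUE to $\zeta$ subsumes pair-correlation-level input and lies beyond current technology --- is accurate; I would only add that even the CUE computation of the exact type proportions $1/4,\ 1/2,\ 1/4$ is not carried out in the Appendix (the numerics there give roughly $22.5\%,\ 50.3\%,\ 27.2\%$ at $n=22$), so that step is also open.
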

The coefficients of the $T\log(T)$ terms in the conjecture are based on the heuristic that each of the two contours $\tre(\eta(s))=0,$ $\tim(\eta(s))>0$ and $\tre(\eta(s))=0,$  $\tim(\eta(s))<0$ emanating from a zero $\rho^\prime$ of $\zeta^\prime$ has equal chance of exiting the critical strip to the left or to the right.  And this is in rough agreement with the numerical evidence.  The lower order terms are the simplest expression in agreement with (\ref{Eq:2}), (\ref{Eq:3}) and (\ref{Eq:4}).

\begin{figure}
\begin{center}
\includegraphics[scale=1.3, viewport=0 0 310 180,clip]{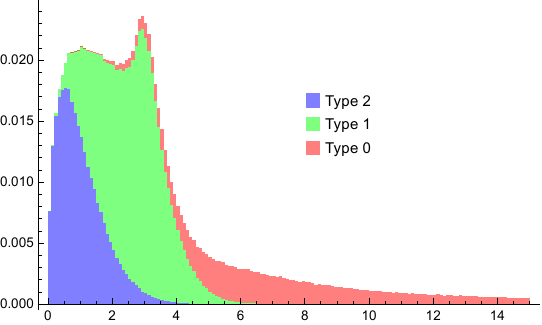}
\caption{$(\beta^\prime-1/2)\log(\gamma^\prime)$ for  $10^6$ zeros near $T=10^{10}$.}\label{F:3}
\end{center}
\end{figure}
In Figure \ref{F:3} we show the histogram of 
$
(\beta^\prime-1/2)\log(\gamma^\prime)
$
for zeros of type 0, 1, and 2 separately, for the 1,001,390 zeros $\rho^\prime$ computed.   (This is the analog of the data in Figure 5 in \cite{Duenez}, now separated by types.  See also Figure \ref{AF:3} in the Appendix)\ \   In \cite{Duenez}, the authors write \emph{\lq\lq We would like to know the underlying cause of the curious \lq second bump\rq in the distribution of zeros of derivatives [of characteristic polynomials of unitary matrices]... In Figure 5 we find a similar shape for the distribution of zeros of $\zeta^\prime$.\rq\rq}\ \  Interestingly, the histograms analogous to Figure \ref{F:3} for the three types separately each show only a single peak; it is the interplay between them that causes the second bump.

In \cite{FarmerKi}, Farmer and Ki show that if $\zeta^\prime(s)$ has sufficiently many zeros close to the
critical line, then $\zeta(s)$ has many closely spaced zeros. This gives them a condition on the zeros of
$\zeta^\prime(s)$ which implies a lower bound of the class numbers of imaginary quadratic
fields.
One sees in Figure \ref{F:3} the type 2 zeros closest to the critical line, and the type 0 zeros the furthest.
In fact the median value of $(\beta^\prime-1/2)\log(\gamma^\prime)$ for the type two zeros is $0.889$; the other quartiles are $0.487$ and $1.443$.  

The data strongly motivates the further study of the types, and in particular, the type 2 zeros.   Corresponding to the type 2 zeros of $\zeta^\prime(s)$ in the numerical data, we have 273,422 pairs of canonically associated type 2 zeros $1/2+i\gamma^-$, $1/2+i\gamma^+$ of $\zeta(s)$.
Figure \ref{F:4} shows the normalized gap 
\[
(\gamma^+-\gamma^-)\cdot \frac{\log(\gamma^\prime)}{2\pi};
\]
99.7\% are less than the average gap and 32.2\% are less than half the average gap.

\begin{figure}
\begin{center}
\includegraphics[scale=1.3, viewport=0 0 310 160,clip]{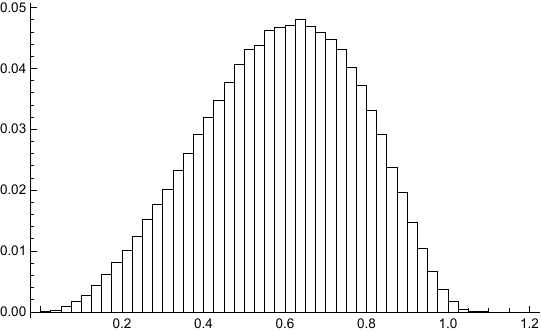}
\caption{Normalized gaps between pairs of type 2 zeros of $\zeta(s)$.}\label{F:4}
\end{center}
\end{figure}

The average value of $\beta^\prime$ is $1/2+\log\log(\gamma^\prime)/\log(\gamma^\prime)$.  The extra log log factor  probably comes from a  small number
of $\rho^\prime$ that are very far from the critical line, and  conjecturally almost all $\rho^\prime$ have real part $1/2+c/\log \gamma^\prime$ with $c$ bounded.    Spira showed in \cite{Spira2} that the number of $\rho^\prime$ with real part greater than $\alpha>1/2$ up to height $T$ is $O(T)$.  On the other hand, \cite[Theorem 11.5C]{Tit}  says there is a constant $E$, $2<E<3$ so that $\zeta^\prime(s)$ has an infinity of zeros in every strip between $\sigma=1$ and $\sigma=E$.

We can rescale by $1/\log\log(10^{10})$ to see the median for type 2 zeros on this scale is $0.283$.  For comparison, the median for type 0 zeros on this scale is $2.48$.  
Although we are primarily interested in zeros of $\zeta^\prime(s)$ close to the critical line, we briefly investigate in the rest of this section,  zeros which are far from the critical line.   Following Akatsuka \cite{Akatsuka}, let $G(s)=-2^s/\log(2)\zeta^\prime(s)$, so 
$$
\text{im}(G(s))=-\sum_{n=3}^\infty \frac{\log n}{\log 2}(2/n)^\sigma\sin(t\log(n/2)).
$$
For $\sigma\gg 1$ the first term dominates, and vanishes for $t=n\pi/\log(3/2)$.  The level curves $\text{im}(G(s))=0$ are asymptotic to horizontal lines spaced $\pi/\log(3/2)\approx 7.748$ apart.
On taking derivatives with respect to $\sigma$, with $t\approx n\pi/\log(3/2)$ one sees the real part is decreasing (as $\sigma$ increases) for even n, taking values in $(1,\infty)$, and so does not vanish.  For odd $n$ the real part is increasing, taking values in $(-\infty,1)$, and so will vanish.  This is the analog for $\zeta^\prime(s)$ of results of Arias-de-Reyna \cite{Arias}, and Conrey \cite{Conrey} for level curves for $\zeta(s)$.  Conrey calls the $\zeta(s)$ analogs of the former family G-curves; they cross the critical line at a Gram point.  The analogs of the latter family are called Z-curves; they cross the critical line at a zero of $\zeta(s)$.  We will use Conrey's terminology Z-curves here as well.  (There is no analog of Gram point for $\zeta^\prime(s)$).

There were 19,803 zeros of $\zeta^\prime(s)$ in the data lying on a Z-curve (corresponding to odd $n$ with $1,290,635,525\le n\le1,290,675,131$).  All but one were type 0; all but 17 had real part $\beta^\prime>1$.  We make the following conjecture:
\begin{conjecture}  Asymptotically, 100\% of the zeros lying on a Z-curve are type 0, and have real part $\beta^\prime>1$.
\end{conjecture}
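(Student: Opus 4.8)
I sketch an approach to the conjecture; it splits naturally into the two assertions, that the zero has $\beta^\prime>1$ and that it is of type~$0$.

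\emph{The real part.} Parametrize the $n$-th $Z$-curve ($n$ odd, height $\approx n\pi/\log(3/2)$) in the region $\sigma\ge1$ as a graph $\sigma\mapsto\sigma+it_n(\sigma)$, legitimately by the implicit function theorem away from zeros of $G^\prime$ (Hypothesis~D). Along it $\tim G=0$ by construction and, as in the discussion preceding the conjecture, $\tre G$ is monotone in $\sigma$, so it vanishes exactly once — at the associated zero $\rho^\prime$ of $\zeta^\prime$ — and $\beta^\prime>1$ iff $\tre\!\big(G(1+it_n(1))\big)<0$. The plan is to show this can fail for at most $o(T)$ of the $n$ with $n\pi/\log(3/2)\le T$, by combining two facts. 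First, every zero of $\zeta^\prime$ with $\beta^\prime>1$ lies on a $Z$-curve: the ``even $n$'' level curves of $\tim G=0$ carry no zero with $\beta^\prime\ge1$, which follows by extending the text's $\sigma\gg1$ computation to all $\sigma\ge1$ via a crude bound on the Dirichlet series of $-\zeta^\prime$. Second, the number of zeros of $\zeta^\prime$ with $\beta^\prime>1$ up to height $T$ is asymptotic to the number of $Z$-curves, namely $\tfrac{\log(3/2)}{2\pi}T$ (Spira's $O(T)$ gives one inequality, and the matching lower bound should come from the level-curve picture itself). Given these, each $Z$-curve carries at most one zero and the $Z$-curves collectively account for $(1+o(1))\tfrac{\log(3/2)}{2\pi}T$ zeros with $\beta^\prime>1$, so all but $o(T)$ of the $Z$-curves must carry one, necessarily with $\beta^\prime>1$.

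\emph{Type~$0$.} By Lemma~1 a level curve $\tre(\eta(s))=0$ meets the critical line only at a zero of $\zeta(s)$, so it suffices to show the (unique, smooth, since $\eta^\prime(\rho^\prime)\ne0$) level curve through such a $\rho^\prime$ never reaches $\sigma=1/2$; equivalently, by Theorem~\ref{Thm:classification}, that $\rho^\prime$ is not in the image of the canonical map from the zeros of $\zeta(s)$. In the half-plane $\sigma\ge1$ we have $\eta=h\zeta^\prime$ with $h$ nonvanishing and $\zeta^\prime$ vanishing only at the sparse ($\ll T$) zeros with $\beta^\prime>1$; a Stirling computation gives $\partial_t\arg\eta>0$ and, for $\sigma\ge1+\delta$, $\partial_\sigma\arg\eta>0$, so away from the $\rho^\prime$ the level curve is monotone — $t$ decreasing in $\sigma$ — and its ``down-right'' branch runs off to $\sigma=+\infty$ harmlessly. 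It remains to show the ``up-left'' branch turns back before $\sigma=1/2$; I would estimate the curvature of this branch near $\sigma=1$ using the formula of \S\ref{S:application}, controlling $\zeta^{\prime\prime}/\zeta^\prime$ there via the bijection $\rho^\prime\mapsto\rho^{\prime\prime}$ of \S\ref{S:Spira} and the Marden-type result of \S\ref{S:Marden}, and show that for $100\%$ of the heights $\gamma^\prime$ the branch bends toward larger $\sigma$ strongly enough to loop back to the right.

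\emph{Main obstacle.} The type-$0$ half is the real difficulty: ruling out that the single level curve carrying a far-right zero threads leftward through the zero-dense strip $1/2<\sigma<1$ to the critical line. The ``fjord'' behaviour is clear in Figure~\ref{F:2} and for $\sigma\gg1$, but converting it into a theorem needs a curvature estimate — equivalently, a lower bound for $|\tre\eta|$ — in $1<\sigma<\beta^\prime_{\max}$ valid for $100\%$ of heights, and I expect this is where the work lies, conceivably requiring a further hypothesis on the vertical distribution of the $\rho^\prime$. The real-part half is comparatively soft, amounting to a counting argument once the asymptotic count of zeros of $\zeta^\prime$ in $\sigma>1$ is in hand.
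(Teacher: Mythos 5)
First, a point of orientation: the paper does not prove this statement. It is stated as a conjecture, supported only by the numerical evidence immediately preceding it (of the 19{,}803 Z-curve zeros near $T=10^{10}$, all but one were type 0 and all but 17 had $\beta^\prime>1$). So there is no argument of the paper's to compare yours against, and a complete proof would be a new theorem, not a reconstruction.

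Your sketch, which you candidly present as a program, has a genuine gap in each half. In the real-part half the counting argument requires the asymptotic \emph{lower} bound: the number of zeros of $\zeta^\prime$ with $\beta^\prime>1$ and $0<\gamma^\prime<T$ must be shown to be $\sim\frac{\log(3/2)}{2\pi}T$. Spira gives only $O(T)$, and Titchmarsh 11.5C gives infinitude in strips to the right of $\sigma=1$ but no asymptotic count. Your remark that the matching lower bound ``should come from the level-curve picture itself'' is close to circular: since $\tre(G(\sigma+it))\to 1$ as $\sigma\to\infty$, a Z-curve fails to carry a zero with $\beta^\prime>1$ exactly when $\tre\bigl(G(1+it_n(1))\bigr)>0$, and bounding how often that happens is essentially the statement being proved. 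You would also need to show that the only components of $\tim(G)=0$ meeting $\sigma\ge 1$ are the asymptotically horizontal ones, and that $\tre(G)$ is monotone along them all the way down to $\sigma=1$; the dominant-term analysis in the paper is valid only for $\sigma\gg 1$. The type-0 half is, as you acknowledge, not proved at all: showing that the left-going branch of $\tre(\eta(s))=0$ emanating from a zero with $\beta^\prime>1$ turns back before reaching $\sigma=1/2$ for $100\%$ of heights is precisely the hard content of the conjecture, and no monotonicity or curvature lower bound is available in the strip $1/2<\sigma<1$ where the other $\lambda^\prime$ accumulate. The tools you point to (monotone argument along vertical lines, the curvature formula of \S\ref{S:application}, the bijection of \S\ref{S:Spira}) are the right circle of ideas, but as written this is a plan for a proof rather than a proof.
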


\begin{figure}
\begin{center}
\includegraphics[scale=1.3, viewport=0 0 310 160,clip]{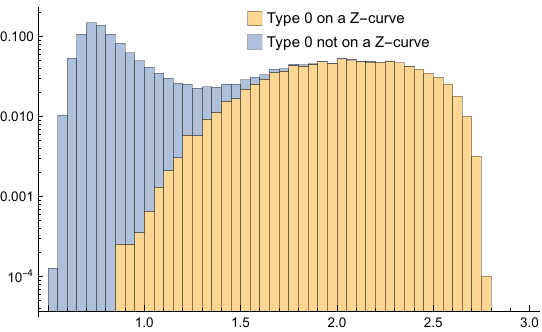}
\caption{Histogram of $\beta^\prime$ (unscaled) for type 0 zeros.}\label{F:3prime}
\end{center}
\end{figure}

Figure \ref{F:3prime} shows a histogram of $\beta^\prime$ (unscaled) for the type 0 zeros lying on a Z-curve (in tan) and those type 0 zeros not on a Z-curve (in blue).  (Since up to height $T$ there are $\sim \log(3/2)/2\pi\cdot  T$ which lie on a Z-curve, and so (conjecturally) $\gg T\log T$ which do not, the vertical axis in Figure \ref{F:3prime} is on a logarithmic scale).    One sees that they seem to have very different distributions.

 \section{Two Lemmas}\label{S:lemmas}

For the Lemmas which will finish the proof of Theorem \ref{Thm:classification}, let $\log\eta(s)$ be any choice of the branch of the logarithm in an open set which contains the critical line but no zeros of $\zeta^\prime$.  
Let
\[
F(t)\overset{\text{def.}}=-\tre\frac{\eta^\prime}{\eta}\left(\frac12+i t\right)=-\tre \frac{d}{d\sigma}\log(\eta(s))_{|s=1/2+it}.
\]
By the Cauchy-Riemann equations applied to $\log(\eta(s))$ on such a set, 
\[
\frac{d\log|\eta(s)|}{d\sigma}=\frac{d\arg\eta(s)}{dt},
\]
and so when evaluated at $s=1/2+it$,
\[
F(t)=-\frac{d \arg(\eta(1/2+it))}{dt}.
\]
\begin{lemma}\label{Lemma:ZhangGe}
For $t>4$, $F(t)>0$.
\end{lemma}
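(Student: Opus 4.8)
The plan is to reduce the claim $F(t)>0$ to a sum over zeros of $\zeta(s)$ and $\zeta'(s)$ via the partial fraction (Hadamard-type) expansions, then show the arithmetic and archimedean terms combine with the correct sign for $t>4$. Recall $\eta(s)=h(s)\zeta'(s)$ with $h(s)=\pi^{-s/2}\Gamma(s/2)$, so
\[
-\frac{\eta'}{\eta}(s)=-\frac{h'}{h}(s)-\frac{\zeta^{\prime\prime}}{\zeta^\prime}(s).
\]
The first term is elementary: $-h'/h(s)=\tfrac12\log\pi-\tfrac12\psi(s/2)$, whose real part on $s=1/2+it$ grows like $-\tfrac14\log(t/2\pi)+O(1/t)$ as $t\to\infty$, so it is negative for large $t$. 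Thus the positivity must come from $-\tre(\zeta^{\prime\prime}/\zeta^\prime)(1/2+it)$, i.e.\ from the zeros of $\zeta'$.

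The key step is to use the partial fraction expansion of $\zeta^{\prime\prime}/\zeta^\prime$. Since $\zeta'(s)$ has the trivial zeros and the complex zeros $\rho^\prime=\beta^\prime+i\gamma^\prime$, one writes
\[
\frac{\zeta^{\prime\prime}}{\zeta^\prime}(s)=B'+\sum_{\rho^\prime}\left(\frac{1}{s-\rho^\prime}+\frac{1}{\rho^\prime}\right)+(\text{trivial-zero terms})-\frac{2}{s-1},
\]
the last term coming from the double pole of $\zeta'$ at $s=1$. Taking $-\tre(\cdot)$ at $s=1/2+it$ gives for each complex zero a term
\[
-\tre\frac{1}{1/2+it-\rho^\prime}=\frac{\beta^\prime-1/2}{(\beta^\prime-1/2)^2+(t-\gamma^\prime)^2},
\]
which is nonnegative precisely because RH for $\zeta$ forces $\beta^\prime\ge 1/2$ (this is the classical consequence of Speiser's theorem / the Levinson--Montgomery circle of ideas). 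So the sum over $\rho^\prime$ is $\ge 0$. The pole at $s=1$ contributes $+\tre\,2/(1/2+it-1)=+2\cdot(1/2)/(1/4+t^2)>0$, which also helps. It remains to check the trivial zeros of $\zeta'$ (which are real and negative, near $-2n$) and the $B'$ and $1/\rho^\prime$ constants: grouping each $1/(s-\rho^\prime)$ with its $1/\rho^\prime$ and using the standard symmetrization, these contribute something comparable to $-h'/h$ but with the opposite, i.e.\ helpful, behavior — one gets a main term of size $+\tfrac14\log(t/2\pi)$ from the density of zeros of $\zeta'$, which dominates the $-\tfrac14\log(t/2\pi)$ from $-h'/h$. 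Concretely, this is cleanest if one instead differentiates the Riemann--von Mangoldt-type formula: $\arg\eta(1/2+it)$ is, up to the bounded term from $\arg\zeta'$, equal to $-\tfrac{t}{2}\log\pi+\tim\log\Gamma(1/4+it/2)$, whose $t$-derivative is $-\tfrac12\log\pi+\tfrac12\tre\,\psi(1/4+it/2)\to +\infty$ with a main term $+\tfrac12\log(t/2)$; so $\arg\eta$ is eventually increasing, i.e.\ $F(t)<0$?? — no: one must track the sign carefully through $F(t)=-d\arg\eta/dt$, and the point of Zhang's Lemma 1 is that between consecutive zeros $\arg\eta$ \emph{decreases} by $\pi$, so $F(t)>0$; the $-h'/h$ main term is what drives this, and the zero-sum only reinforces it. The correct bookkeeping therefore is: $F(t)=\tfrac12\log\pi-\tfrac12\tre\,\psi(1/4+it/2)-\tre(\zeta^{\prime\prime}/\zeta^\prime)(1/2+it)$, the $\psi$ term supplies a negative main term $-\tfrac14\log(t/2\pi)$, wait — that makes $F$ negative. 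I will resolve this by being careful with signs: $\tre\,\psi(1/4+it/2)=\log(t/2)+O(1/t)>0$ for $t$ large, so $\tfrac12\log\pi-\tfrac12\tre\,\psi(1/4+it/2)\sim-\tfrac12\log(t/(2\pi^{1/2}))<0$; hence the positivity of $F$ must come entirely from $-\tre(\zeta^{\prime\prime}/\zeta^\prime)$, whose main term via the zero-density of $\zeta'$ is $+\tfrac12\log(t/2\pi)$, and the two main terms cancel leaving a net $+\tfrac14\log(\cdot)+O(1)$ — here I would lean on the explicit formula of Berndt/Levinson--Montgomery for $N'(T)$, the counting function of zeros of $\zeta'$, whose derivative gives exactly this density.

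The honest approach, and the one I expect the authors to take, avoids re-deriving densities: one shows $F(t)>0$ \emph{directly} from the two explicit-formula pieces after pairing. Write $F(t)$ as the sum of (i) $-\tre\,h'/h(1/2+it)$, (ii) the pole contribution $+2\,\tre\,1/(s-1)$ at $s=1/2+it$, and (iii) the sum over complex zeros $\rho^\prime$ of $\tfrac{\beta^\prime-1/2}{|1/2+it-\rho^\prime|^2}\ge 0$, and (iv) the contribution of the real (trivial) zeros of $\zeta'$. The sum (iii) is $\ge 0$ by RH-plus-Speiser ($\beta^\prime\ge 1/2$); the hard part is that (i)$+$(iv), the purely archimedean/trivial-zero part, must be shown positive for $t>4$, since (ii) and (iii) alone cannot be relied on to outweigh a potentially negative (i). I would handle (i)$+$(iv) by the standard trick: the trivial zeros of $\zeta'$ interlace the trivial zeros $-2,-4,\dots$ of $\zeta$, and summing $\tfrac{|{\rm trivial\ zero}|+1/2}{(|{\rm trivial\ zero}|+1/2)^2+t^2}$ against the digamma reflection formula gives a clean closed form; numerically the threshold $t>4$ is where the resulting elementary function of $t$ turns positive, so the final step is a finite verification (a derivative sign check on an explicit one-variable function, exactly as Zhang does for his Lemma 1). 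The main obstacle, then, is not any deep input but the careful sign bookkeeping across the $h'/h$, trivial-zero, and pole terms, together with pinning the numerical constant $4$; everything else is forced by RH via $\beta^\prime\ge 1/2$.
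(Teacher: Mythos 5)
Your skeleton is the right one and matches the paper's route: expand $\eta'/\eta$ by the Hadamard product, observe that RH (via Speiser/Levinson--Montgomery) forces every complex zero $\lambda'=\beta'+i\gamma'$ of $\zeta'$ to have $\beta'\ge 1/2$, so that $-\sum_{\lambda'}\tre\bigl(1/(1/2+it-\lambda')\bigr)=\sum_{\lambda'}(\beta'-1/2)/|1/2+it-\lambda'|^2\ge 0$, and then control the archimedean, pole, and real-zero pieces explicitly to pin down the threshold $t>4$. But the writeup does not close, and the gap is not just bookkeeping. The decisive fact — which the paper imports from Ge's Lemma 8 in \cite{FanGe2}, sharpening Zhang's $O(1)$ — is that after the $\tfrac12\log t$ growth of $-\tre(h'/h)$ cancels \emph{exactly} against the corresponding growth hidden in the Hadamard expansion, what survives is the specific positive constant $\log(2)/2$, and the entire proof is the inequality $\log(2)/2-2/(1+4t^2)-1/t>0$ for $t>4$ once the remaining pole and trivial-zero terms are bounded by $-2/(1+4t^2)-1/t$. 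Your proposal never identifies this constant; instead you assert at one point a spurious net main term ``$+\tfrac14\log(\cdot)+O(1)$'' (there is no surviving $\log t$ term of either sign), and your fallback plan — show that the archimedean-plus-trivial-zero part (i)$+$(iv) is positive on its own — cannot work as stated, because without the $\log(2)/2$ that combination is not positive; the nonnegative zero sum (iii) can vanish, so nothing else is available to save the sign.

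Two further concrete errors: the contribution of the double pole of $\zeta'$ at $s=1$ is $\tre\bigl(2/(s-1)\bigr)=-1/(1/4+t^2)<0$ at $s=1/2+it$, not positive as you claim, so it hurts rather than helps; and the passage where you flip between ``$F(t)<0$??'' and ``the $-h'/h$ main term is what drives this'' versus ``the positivity must come entirely from $-\tre(\zeta''/\zeta')$'' is self-contradictory and is left unresolved. The resolution is that neither piece drives the positivity: the $\log t$ terms cancel, and positivity rests on (a) the nonnegativity of the sum over complex zeros and (b) the residual constant $\log(2)/2$, against which the explicitly negative $O(1/t)$ remainder must be beaten down — which is exactly where $t>4$ comes from. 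To repair your argument you would need to either rederive Ge's identification of the constant or cite it, and then redo the final elementary estimate with the correct signs.
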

\begin{proof}
In \cite[(2.9)]{Zhang}, Zhang deduces from the Hadamard product for $\eta(s)$ that
\[
F(t)=-\sum_{\lambda^\prime}\tre\left(\frac{1}{1/2+it-\lambda^\prime}\right)+O\left(1\right)
\]
(where $\lambda^\prime$ denotes complex zeros of $\zeta^\prime(s)$ with real part $>1/2$.)\ \ 
In \cite[Lemma 8]{FanGe2}, Ge improves this to get
\begin{equation}\label{Eq:FanGe}
F(t)=-\sum_{\lambda^\prime}\tre\left(\frac{1}{1/2+it-\lambda^\prime}\right)+\log(2)/2+O\left(1/t\right).
\end{equation}
We will not reproduce those calculations (the interested reader can go to the cited work), but observe the $O(1/t)$ error term is explicitly (writing $s=1/2+it$)
\[
\tre\left(\frac{1}{s}-\frac{2}{s-1}\right)-\tre\sum_{n=1}^\infty\left(\frac{1}{s+a_n}-\frac{1}{s+2n}\right).
\]
Here
\[
-(2n+2)<-a_n<-2n
\]
is the unique real zero on $\zeta^\prime(s)$ in the interval.  For $s=1/2+it$,
\[
\tre\left(\frac{1}{s}+\frac{2}{s-1}\right)=-\frac{2}{1+4t^2}.
\]
We claim that the tail of the series,
\[
-\tre\sum_{2n>t}^\infty\left(\frac{1}{s+a_n}-\frac{1}{s+2n}\right)>0,
\]
as this sum is
\[
\sum_{2n\ge t}\tre\left(\frac{a_n-2n}{(s+a_n)(s+2n)}\right)=
\sum_{2n\ge t}(a_n-2n)\frac{\tre\left((\bar{s}+a_n)(\bar{s}+2n)\right)}{|s+a_n|^2|s+2n|^2}.
\]
With $2n<a_n<2n+2$, and $2n>t$, every term is positive.
Meanwhile
\[
-\tre\sum_{2n<t}^\infty\left(\frac{1}{s+a_n}-\frac{1}{s+2n}\right)=
\sum_{2n<t}\frac{-a_n-1/2}{|s+a_n|^2}+\frac{1/2+2n}{|s+2n|^2}.
\]
From $|s+a_n|^2>|s+2n|^2$ we deduce
\[
\frac{-1/2-a_n}{|s+a_n|^2}>\frac{-1/2-a_n}{|s+2n|^2},
\]
so this sum is bounded below by
\[
\sum_{2n<t}\frac{-a_n+2n}{|s+2n|^2}>-2\sum_{2n<t}\frac{1}{(2n+1/2)^2+t^2}.
\]
The sum has $t/2$ terms, each less than $1/t^2$ so this sum is bounded below by $-1/t$.  We conclude that
\[
F(t)>-\sum_{\beta^\prime>1/2}\tre\frac{1}{1/2+it-\lambda^\prime}+\log(2)/2+\frac{2}{1+4t^2}-\frac{1}{t},
\]
and for $t>4$,
\[
\log(2)/2-\frac{2}{1+4t^2}-\frac{1}{t}>0.
\]
\end{proof}
By Stirling's formula we have that for $\rho=1/2+i\gamma$ a zero of $\zeta(s)$, Zhang's Lemma 3 is, more explicitly,
\begin{equation}\label{Eq:ZLemma3}
F(\gamma)=\frac12\log\left(\frac{\gamma}{2\pi}\right)+O\left(\frac1\gamma\right).
\end{equation}
Zhang's Lemma 4 becomes
\begin{lemma} For $n\ge 1$,
\[
\int_{\gamma_n}^{\gamma_{n+1}}F(t)\, dt=\pi.
\]
\end{lemma}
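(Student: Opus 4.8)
The plan is to read the integral as the net decrease of $\arg\eta(1/2+it)$ from $\gamma_n$ to $\gamma_{n+1}$ and then show that this decrease is exactly $\pi$. Since $n\ge 1$ we have $\gamma_n\ge\gamma_1>7$, so Lemma~1 (Zhang's characterization: for $t>7$, $\zeta(1/2+it)=0$ iff $\tre(\eta(1/2+it))=0$) applies on all of $[\gamma_n,\gamma_{n+1}]$, and Lemma~\ref{Lemma:ZhangGe} applies since $\gamma_n>4$. On this interval $\eta(1/2+it)\ne 0$: the factor $h$ is zero-free, and $\zeta^\prime$ has no zeros on the critical line under the standing hypotheses, which is exactly what makes $\log\eta$ available near the line in the setup above. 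Hence $F$ is continuous on $[\gamma_n,\gamma_{n+1}]$, and from the identity $F(t)=-\frac{d}{dt}\arg\eta(1/2+it)$ established above together with the fundamental theorem of calculus,
\[
\int_{\gamma_n}^{\gamma_{n+1}}F(t)\,dt=\arg\eta\!\left(\tfrac12+i\gamma_n\right)-\arg\eta\!\left(\tfrac12+i\gamma_{n+1}\right)
\]
for any fixed continuous branch of the argument. It remains to evaluate this difference.

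First I would locate the argument at the two endpoints and constrain its values in between. By Lemma~1, for $t>7$ one has $\tre(\eta(1/2+it))=0$ precisely when $t$ is an ordinate of a nontrivial zero of $\zeta$. Thus $\eta(1/2+i\gamma_n)$ and $\eta(1/2+i\gamma_{n+1})$ are nonzero purely imaginary numbers, so their arguments lie in $\pi/2+\pi\mathbb{Z}$; while for every $t$ strictly between the consecutive ordinates $\gamma_n$ and $\gamma_{n+1}$ there is no zero of $\zeta$, so $\tre(\eta(1/2+it))\ne 0$ and hence $\arg\eta(1/2+it)\notin\pi/2+\pi\mathbb{Z}$. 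By Lemma~\ref{Lemma:ZhangGe}, $F(t)>0$ on $[\gamma_n,\gamma_{n+1}]$, so $t\mapsto\arg\eta(1/2+it)$ is strictly decreasing there.

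Combining these facts finishes the proof. Being continuous and strictly decreasing, $\arg\eta(1/2+it)$ takes as its full set of values on $[\gamma_n,\gamma_{n+1}]$ the closed interval $\left[\arg\eta(\tfrac12+i\gamma_{n+1}),\,\arg\eta(\tfrac12+i\gamma_n)\right]$; both endpoints lie in $\pi/2+\pi\mathbb{Z}$, the open interior meets $\pi/2+\pi\mathbb{Z}$ in no point, and strict monotonicity makes the interval nondegenerate. Therefore its endpoints are \emph{consecutive} elements of $\pi/2+\pi\mathbb{Z}$, which differ by exactly $\pi$, so the displayed integral equals $\pi$. (In passing, this shows $\tim(\eta(1/2+it))$ changes sign between $\gamma_n$ and $\gamma_{n+1}$, consistent with the parity in the Proposition.) The only real obstacle is ruling out a net decrease of $2\pi,3\pi,\dots$; this is exactly where the non-vanishing of $\tre(\eta)$ strictly between consecutive Riemann zeros (Lemma~1) is needed, as it prevents $\arg\eta$ from returning to the imaginary axis before $\gamma_{n+1}$. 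The whole argument is essentially Zhang's proof of his Lemma~4, spelled out in the present notation.
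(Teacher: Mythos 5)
Your proof is correct: the paper offers no argument of its own here, simply transcribing Zhang's Lemma 4 into the $\eta$ notation, and your reconstruction (monotone decrease of $\arg\eta(1/2+it)$ by Lemma~\ref{Lemma:ZhangGe}, endpoints purely imaginary and no purely imaginary values in between by Zhang's Lemma~1, hence a net drop of exactly $\pi$) is precisely Zhang's original proof. Nothing is missing; the observation that $\gamma_1>7$ justifies invoking Lemma~1 throughout, and simplicity of the zeros gives the needed non-vanishing of $\eta$ on the critical line.
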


We will also need a result analogous to Lemma \ref{Lemma:ZhangGe} on the line $\tre(s)=4$.  Let
\[
f(t)\overset{\text{def.}}=\tre\frac{\eta^\prime}{\eta}\left(4+i t\right).
\]
(Note that unlike $F(t)$, there is no leading minus sign.)\ \ 
\begin{lemma}\label{Lemma:ZhangGe3}
For $t>40$, 
\[
f(t)>0.
\]
\end{lemma}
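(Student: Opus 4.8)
The plan is to start from the logarithmic derivative decomposition
\[
\frac{\eta^\prime}{\eta}(s)=-\frac12\log\pi+\frac12\frac{\Gamma^\prime}{\Gamma}\!\left(\frac s2\right)+\frac{\zeta^{\prime\prime}}{\zeta^\prime}(s),
\]
specialized to $s=4+it$, so that $s/2=2+it/2$. Two ingredients are needed: a growing lower bound for the $\Gamma$-term, and an explicit lower bound for $\tre(\zeta^{\prime\prime}/\zeta^\prime)(4+it)$. First I would note that on $\tre(s)=4$ the function $\eta$ is zero-free — indeed $|\zeta^\prime(4+it)|\ge\frac{\log2}{16}-\sum_{n\ge3}\frac{\log n}{n^4}=\frac{\log2}{8}-|\zeta^\prime(4)|>0$ by the Dirichlet series of $\zeta^\prime$ — so $f(t)$ and the relevant branch of $\log\eta$ are defined for all real $t$. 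By Stirling, $\tre\,\frac12\frac{\Gamma^\prime}{\Gamma}(2+it/2)=\frac12\log|2+it/2|+O(1/t)\ge\frac12\log(t/2)-\varepsilon(t)$ with $\varepsilon(t)$ explicit and small (for $t\ge40$ one may take $\varepsilon(t)\le 1/t+\cdots<0.03$).

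For the second ingredient, since $\tre(s)=4$ lies in the half-plane of absolute convergence I would use Akatsuka's function from \S\ref{S:data}: write $-\zeta^\prime(s)=\log2\cdot 2^{-s}G(s)$ with $G(s)=1+\sum_{n\ge3}\frac{\log n}{\log2}(2/n)^s$, so that $\frac{\zeta^{\prime\prime}}{\zeta^\prime}(s)=-\log2+\frac{G^\prime}{G}(s)$. On $\tre(s)=4$ one has $|G(4+it)-1|\le B:=\sum_{n\ge3}\frac{\log n}{\log2}(2/n)^4<1$ and $|G^\prime(4+it)|\le\log2\cdot D$ with $D:=\sum_{n\ge3}\frac{\log n\,\log(n/2)}{(\log2)^2}(2/n)^4$, whence $\tre\frac{\zeta^{\prime\prime}}{\zeta^\prime}(4+it)\ge-\log2-\frac{\log2\cdot D}{1-B}=:-c$, an explicit negative constant. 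A refinement I expect to be necessary: the Dirichlet coefficients of $G^\prime$ are $\log_2(n/2)$ times those of $G-1$, so one may write $G^\prime=-\log2\big(\log_2(3/2)(G-1)+E\big)$ with $E$ having non-negative Dirichlet coefficients; since $G$ takes values in the right half plane, $\tre(1/G)>0$, and this replaces $c$ by the smaller constant $\log2\big(1+\log_2(3/2)+|E(4)|/(1-B)\big)$ — and, if needed, one can iterate by extracting further leading terms.

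Combining the two ingredients, $f(t)\ge\frac12\log(t/2)-\frac12\log\pi-\varepsilon(t)-c>0$ for $t>T_0$, where $T_0$ is explicit; with the refined $c$ it works out to a value somewhat above $40$ (on the order of a hundred). For the remaining range $40<t\le T_0$ — indeed for all $7<t\le T_0$ — I would verify $f(t)>0$ directly: $f(t)$ is given by the rapidly convergent Dirichlet series for $\zeta^\prime,\zeta^{\prime\prime}$ together with $\frac{\Gamma^\prime}{\Gamma}(2+it/2)$, so an elementary bound on $f^\prime(t)$ reduces this to a finite check of the kind carried out in \S\ref{S:data}. (An alternative route is to proceed as in Ge's Lemma 8 and write $f(t)=\sum_{\lambda^\prime}\tre\frac{1}{4+it-\lambda^\prime}-\log2/2+O(1/t)$, where now every term is positive because $\tre(\lambda^\prime)<4$, and lower-bound the zero sum by $\frac12\log\frac{t}{4\pi}+O(1)$ via the density of zeros of $\zeta^\prime$; but controlling the $O(1)$ well enough to reach $t=40$ is no easier.)

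The main obstacle is precisely the gap between the clean constant $c$ coming from the quotient bound $|G^\prime|_{\max}/|G|_{\min}$ and what is required to reach the stated threshold $t=40$. That bound is wasteful because it forces $|G(4+it)|$ to its minimum and $|G^\prime(4+it)|$ to its maximum simultaneously, whereas $G$ and $G^\prime$ are built from the same oscillating exponentials $e^{-it\log(n/2)}$, so the small denominators and large numerators cannot line up. One must therefore either push the refinement far enough (extracting more leading Dirichlet terms, or estimating $\tre(E\,\overline G)/|G|^2$ using that $E$ and $G$ share their phases) or absorb the shortfall into the finite verification on $(40,T_0]$; I would do the latter, since it is routine and entirely in the spirit of the paper.
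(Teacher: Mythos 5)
Your proposal is correct in substance and has the same skeleton as the paper's proof: split $f(t)=\tre(h^\prime/h)(4+it)+\tre(\zeta^{\prime\prime}/\zeta^\prime)(4+it)$, show the Gamma term grows like $\tfrac12\log(t/2\pi)$, bound the second term below by an explicit negative constant on the line $\sigma=4$ using absolute convergence, and dispose of the remaining finite range by direct computation. Where you differ is in how the constant is extracted. The paper bounds $|\zeta^{\prime\prime}(4+it)|$ from above by comparing the Dirichlet series to an integral, and bounds $|\zeta^\prime(4+it)|$ from below via the identity $(1-2^{-s})\zeta^\prime(s)+2^{-s}\log(2)\zeta(s)=-\sum_{n\ge1}\log(2n+1)(2n+1)^{-s}$ together with $|\zeta(s)|\ge\zeta(2\sigma)/\zeta(\sigma)$; this yields $|\zeta^{\prime\prime}/\zeta^\prime(4+it)|\le 3.07718$, hence a crossover threshold of $t=3000$ and a \emph{Mathematica} check on $40\le t\le 3000$. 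Your normalization by the leading Dirichlet term, $\zeta^{\prime\prime}/\zeta^\prime=-\log 2+G^\prime/G$ with $|G-1|\le B<1$, exploits the cancellation between numerator and denominator that the paper's quotient of separate bounds throws away, and (with or without your refinement extracting the $n=3$ phase) brings the constant down to roughly $1.5$--$1.7$ and the threshold $T_0$ down to the range $125$--$200$. That buys a much smaller numerical verification; what it does not buy is elimination of the numerical step, which both arguments need, and which you correctly accept.

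One caution on your parenthetical claim that the direct check would succeed for all $7<t\le T_0$: the paper notes explicitly that $\arg(\eta(4+it))$ is \emph{not} increasing on $4\le t\le 40$, i.e.\ $f$ does go negative below the stated threshold; this is presumably why the lemma is stated for $t>40$ rather than for all $t$ past the first few zeros. Your verification should therefore be claimed only on $(40,T_0]$, which is all the lemma requires.
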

\begin{proof}
First we consider those $t\ge 3000$.  For $\sigma>1$ we have a Dirichlet series expansion
\[
\zeta^{\prime\prime}(s)=\sum_{n=2}^\infty \log(n)^2n^{-s}.
\]
Along the vertical line $\tre(s)=\sigma$ we have upper bounds
\[
|\zeta^{\prime\prime}(s)|\le \sum_{n=2}^\infty\frac{\log(n)^2}{n^\sigma}.
\]
Comparing sums to integrals we get the upper bound
\begin{multline*}
 |\zeta^{\prime\prime}(\sigma+it)|\le \sum_{n=2}^{20}\frac{\log(n)^2}{n^\sigma}+\\
 \frac{(\sigma -1)^2 \log ^2(20)+2 (\sigma -1) \log (20)+2}{20^{\sigma -1}(\sigma -1)^3}
\end{multline*}

For lower bounds on $|\zeta^\prime(\sigma+it)|$, we write
\begin{align*}
(1-2^{-s})\zeta(s)=&\sum_{n=0}^\infty\frac{1}{(2n+1)^s}\\
(1-2^{-s})\zeta^\prime(s)+2^{-s}\log(2)\zeta(s)=&-\sum_{n=1}^\infty\frac{\log(2n+1)}{(2n+1)^s}.
\end{align*}
Thus
\[
\left|(1-2^{-s})\zeta^\prime(s)\right|\ge 2^{-\sigma}\log(2)\left|\zeta(s)\right|-\sum_{n=1}^\infty\frac{\log(2n+1)}{(2n+1)^\sigma}.
\]
For $\sigma>1$ we have the lower bound bounds
\[
 |\zeta(s)|\ge \frac{\zeta(2\sigma)}{\zeta(\sigma)},
\]
which along with
\[
\sum_{n=21}^\infty\frac{\log(2n+1)}{(2n+1)^\sigma}\le\int_{20}^\infty\frac{\log(2x+1)}{(2x+1)^\sigma}\, dx
=\frac{ (\sigma -1) \log (41)+1}{2\cdot 41^{\sigma -1} (\sigma -1)^2}
\]
gives
\begin{multline*}
\left|(1-2^{-s})\zeta^\prime(s)\right|\ge 2^{-\sigma}\log(2)\frac{\zeta(2\sigma)}{\zeta(\sigma)}\\
- \sum_{n=1}^{20}\frac{\log(2n+1)}{(2n+1)^\sigma}-\frac{ (\sigma -1) \log (21)+1}{2\cdot 21^{\sigma -1} (\sigma -1)^2}.
\end{multline*}
Thus we have the lower bound 
\begin{multline}
|\zeta^\prime(s)|\ge\\
(1+2^\sigma)^{-1}\left(2^{-\sigma}\log(2)\frac{\zeta(2\sigma)}{\zeta(\sigma)}
- \sum_{n=1}^{20}\frac{\log(2n+1)}{(2n+1)^\sigma}-\frac{ (\sigma -1) \log (41)+1}{2\cdot 41^{\sigma -1} (\sigma -1)^2}\right).
\end{multline}
In particular, we estimate $|\zeta^{\prime\prime}(4+it)/\zeta^\prime(4+it)|\le 3.07718$.

Meanwhile $h^\prime/h(4+it)=(\psi(2+it/2)-\log(\pi))/2$, where the polygamma function $\psi(z)=\Gamma^\prime(z)/\Gamma(z)$ has real part which is an increasing function of $t$, and one calculates in \emph{Mathematica} that 
\[
\tre((\psi(2+i3000/2)-\log(\pi))/2)\ge3.084.
\]
In the range $40\le t\le 3000$, we can compute $f(t)$ in \emph{Mathematica} to see it is positive.
\end{proof}
 
For the range $4\le t\le 40$, $\arg(\eta(4+it))$ is \emph{not} increasing, but an examination of Figure \ref{F:2} suffices to complete the proof of Theorem \ref{Thm:classification}.

\section{A closer look at the type 2 zeros.  Curvature of the level curve.}\label{S:Type2}

Since the data indicate that pairs of type 2 zeros of $\zeta(s)$ are closer than average, and type 2 zeros of $\zeta^\prime(s)$ are closer to the critical line, it is natural to ask if one can show
\begin{equation}\label{Eq:varSound}
\liminf (\beta^\prime -1/2)\log\gamma^\prime = 0 \Leftrightarrow \liminf (\gamma^+-\gamma^-)\log \gamma^\prime=0
\end{equation}
when the $\liminf$ are both restricted to the subsequence of type 2 zeros. 

We can investigate \eqref{Eq:varSound} via a study of the curvature of the level curve which connects the triple $1/2+i\gamma^-$, $\rho^\prime$, and $1/2+i\gamma^+$.  With $\eta(s)=u+iv$, the formula for the curvature $\kappa$ of the level curve $u(\sigma,t)=0$ may be found in \cite[\S 3]{Goldman}.  Via the Cauchy-Riemann equations, one sees \cite{Jerrard}
\begin{equation}\label{Eq:kappa1}
\kappa=|\eta^\prime|\cdot \tre(\eta^{\prime\prime}/\eta^{\prime\,2}).
\end{equation}

The osculating circle for the curve $\tre(\eta)=0$ at $\rho^\prime$ has radius $R=1/|\kappa|$.  We will make the assumption that $\rho^\prime$ is the rightmost point on the osculating circle.  Of course this is not true for every type 2 zero, but based on our study of the angles $\theta=\arg(\eta^\prime(\rho^\prime))$ below, it is an approximation which is true in the limit for a sequence of type 2 zeros with $(\beta^\prime-1/2)\log(\gamma^\prime)\to 0$.\ \   Then elementary geometry shows the length of the chord this circle cuts from the critical line is
\[
2\left((\beta^\prime-1/2)(2/|\kappa|+1/2-\beta^\prime)\right)^{1/2}.
\]

The inequality 
\[
2\left((\beta^\prime-1/2)(2/|\kappa|+1/2-\beta^\prime)\right)^{1/2}<2^{3/2}\left(\frac{\beta^\prime-1/2}{|\kappa|}\right)^{1/2}
\]
shows that, along a subsequence with 
\[
(\beta^\prime-1/2)\log(\gamma^\prime)\to 0,
\]
any lower bound of the form
\begin{equation}\label{Eq:needestimate}
\log(\gamma^\prime)\ll |\kappa|
\end{equation}
will force the length of the chord, multiplied by $\log(\gamma^\prime)$, to tend to $0$.  Since the error between the level curve and the osculating circle is of cubic order, we see that (\ref{Eq:needestimate}) will force
\[
(\gamma^+-\gamma^-)\log(\gamma^\prime)\to 0
\]
as well, which will prove the $\Rightarrow$ implication in \eqref{Eq:varSound}.  

The expression (\ref{Eq:kappa1}) for $\kappa$ simplifies when evaluated at a zero $\rho^\prime$ of $\zeta^\prime(s)$.  Recalling the notation $h(s)=\pi^{-s/2}\Gamma(s/2)$, we have
\[
\eta^\prime(\rho^\prime)=h(\rho^\prime)\zeta^{\prime\prime}(\rho^\prime),\qquad \eta^{\prime\prime}(\rho^\prime)=2h^\prime(\rho^\prime)\zeta^{\prime\prime}(\rho^\prime)+h(\rho^\prime)\zeta^{\prime\prime\prime}(\rho^\prime).
\]
Recall $\theta$ denotes $\arg(\eta^\prime(\rho^\prime))$, so
\[
\frac{|\eta^\prime|}{\eta^\prime}(\rho^\prime)=\exp(-i\theta),
\]
and the curvature at $\rho^\prime$ reduces to
\begin{equation}\label{Eq:kappa2}
\kappa=\tre\left(\exp(-i\theta)\left(\frac{2h^\prime}{h}(\rho^\prime)+\frac{\zeta^{\prime\prime\prime}}{\zeta^{\prime\prime}}(\rho^\prime)\right)\right).
\end{equation}

Thus the curvature $\kappa$ of the level curve at $\rho^\prime$ is strongly influenced by the location relative to $\rho^\prime$ of the zeros of $\zeta^{\prime\prime}$, which we investigate in the next two sections.

\section{Zeros of $\zeta^{\prime\prime}(s)$}\label{S:Spira}

Spira, in \cite{Spira} was the first to observe that zeros of successive higher order derivatives of the Riemann zeta function seem to cluster along roughly horizontal lines.  He wrote \lq\lq The zeros of $\zeta^{\prime\prime}$ have imaginary part almost exactly equal to those of $\zeta^\prime$, and lie to the right of them.\rq\rq\ \   (See Figure 1 from his paper)\ \   The following explains Spira's observation.

\begin{theorem}\label{Thm4}
The level curves $\arg(\zeta^{\prime\prime}/\zeta^\prime(s))=0$ connect each zero  of $\zeta^\prime(s)$ with $\tre(s)>1/2$ to a zero of $\zeta^{\prime\prime}(s)$ with $\tre(s)>1/2$ (typically to the right), giving a canonical bijection between these two sets.  The same holds for zeros higher derivatives $\zeta^{(k)}(s)$ and $\zeta^{(k+1)}(s)$.
\end{theorem}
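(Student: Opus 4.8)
The plan is to argue, following the same logic used to prove Theorem~\ref{Thm:classification}, that each level curve $\arg(\zeta^{\prime\prime}/\zeta^\prime(s))=0$ emanating from a zero $\rho^\prime$ of $\zeta^\prime(s)$ with $\tre(s)>1/2$ is forced to run to a zero of $\zeta^{\prime\prime}(s)$, rather than escaping to infinity or returning to another zero of $\zeta^\prime$. First I would set up the analytic framework: on a simply connected region avoiding the zeros of $\zeta^\prime$ and $\zeta^{\prime\prime}$, choose a branch of $\log(\zeta^{\prime\prime}/\zeta^\prime(s))$, and note that near a simple zero $\rho^\prime$ of $\zeta^\prime$ (simplicity of the zeros of $\zeta^\prime$ follows from RH plus the simplicity hypothesis in the standard way, or one can argue locally) the function $\zeta^{\prime\prime}/\zeta^\prime$ has a simple pole, so $\arg(\zeta^{\prime\prime}/\zeta^\prime)$ takes every value exactly once on a small circle around $\rho^\prime$; hence exactly one level curve with $\arg = 0$ exits $\rho^\prime$. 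By Hypothesis~D-type reasoning (the zeros of $(\zeta^{\prime\prime}/\zeta^\prime)^\prime$ are countable, so generically the level curve does not branch) this curve is a smooth arc, and it must terminate at a zero or pole of $\zeta^{\prime\prime}/\zeta^\prime$, i.e. at a zero of $\zeta^{\prime\prime}$ or at another zero of $\zeta^\prime$.

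The next step is to rule out the curve joining two distinct zeros of $\zeta^\prime$, and to rule out the curve running off to infinity inside $\tre(s)>1/2$. For the behavior at infinity, I would use the Dirichlet-series asymptotics: for $\sigma\to+\infty$, $\zeta^{\prime\prime}/\zeta^\prime(s)\to \log 2$ (since both are dominated by the $n=2$ term, $\zeta^\prime(s)\sim -\log 2\cdot 2^{-s}$ and $\zeta^{\prime\prime}(s)\sim (\log 2)^2 2^{-s}$), which has argument $0$; so the level curve $\arg=0$ has a whole family of nearby branches becoming asymptotically horizontal far to the right, and the orientation of $\arg(\zeta^{\prime\prime}/\zeta^\prime)$ along a suitable vertical line is monotone there — this is the analog of Lemma~\ref{Lemma:ZhangGe3}. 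Combined with an orientation statement near the boundary $\tre(s)=1/2$ (the analog of Lemma~\ref{Lemma:ZhangGe}, using that $\arg(\zeta^{\prime\prime}/\zeta^\prime)$ changes monotonically along $\sigma=1/2+it$, which again follows from a Hadamard-product / partial-fraction estimate as in Ge's work), the same crossing-of-orientations argument as in the proof of Theorem~\ref{Thm:classification} shows the level curve cannot change its orientation without passing through a zero of $\zeta^\prime$ or $\zeta^{\prime\prime}$, forcing termination at a zero of $\zeta^{\prime\prime}$. Conversely, by the mirror argument each zero of $\zeta^{\prime\prime}$ with $\tre(s)>1/2$ has exactly one $\arg=0$ curve exiting it running back to a zero of $\zeta^\prime$, which gives the bijection; and a first-order expansion $\zeta^\prime(s)\approx \zeta^{\prime\prime}(\rho^\prime)(s-\rho^\prime)$ near $\rho^\prime$ shows the curve leaves $\rho^\prime$ in the direction where $(s-\rho^\prime)$ has argument $-\arg(\zeta^{\prime\prime}(\rho^\prime)/\zeta^{\prime\prime\prime}(\rho^\prime))$ or similar, which one checks points "to the right" in the generic/asymptotic regime, recovering Spira's observation.

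For the higher derivatives $\zeta^{(k)}$ and $\zeta^{(k+1)}$, I would simply observe that the entire argument used only three inputs: (i) near a simple zero of $\zeta^{(k)}$ the ratio $\zeta^{(k+1)}/\zeta^{(k)}$ has a simple pole; (ii) $\zeta^{(k+1)}/\zeta^{(k)}(s)\to\log 2$ as $\sigma\to\infty$ with a monotone-orientation statement far to the right; (iii) a monotone-argument statement for $\zeta^{(k+1)}/\zeta^{(k)}$ along a vertical line near $\sigma=1/2$, coming from the Hadamard factorization of $\zeta^{(k)}$ (all its complex zeros, under RH-type heuristics, lying to the right of $\sigma=1/2$, plus a manageable string of real zeros). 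Each of these holds for every $k\ge 1$ with only notational changes, so the same proof applies verbatim.

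The main obstacle I expect is input (iii): establishing rigorously that $\arg(\zeta^{\prime\prime}/\zeta^\prime(1/2+it))$ is monotone (or at least monotone for $t$ large) is the analog of the delicate computation behind Lemma~\ref{Lemma:ZhangGe}, and here we do not have Zhang's and Ge's ready-made estimates — one must redo the Hadamard-product bookkeeping for $\zeta^{\prime\prime}/\zeta^\prime$, controlling the contribution of the complex zeros $\rho^\prime$ (which sit just to the right of the critical line, so $-\tre(1/(1/2+it-\rho^\prime))$ is positive but needs a quantitative lower bound on its sum) against the string of real zeros of $\zeta^\prime$ and the archimedean factor. This is plausible but is the part that would require genuine work; the escape-to-infinity and local-structure parts are comparatively routine.
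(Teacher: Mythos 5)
There is a genuine error at the core of your escape-to-infinity step, and it matters. You assert $\zeta^{\prime\prime}/\zeta^\prime(s)\to\log 2$ as $\sigma\to+\infty$ and build on the consequence that the argument tends to $0$ there, so that the level curves $\arg(\zeta^{\prime\prime}/\zeta^\prime)=0$ become asymptotically horizontal far to the right. But from the very asymptotics you quote, $\zeta^\prime(s)\sim-\log 2\cdot 2^{-s}$ and $\zeta^{\prime\prime}(s)\sim(\log 2)^2\,2^{-s}$, the limit is $-\log 2$: the argument tends to $\pi$, and the correct conclusion is the opposite of yours --- $\tre(\zeta^{\prime\prime}/\zeta^\prime)$ is \emph{negative} far to the right, so a curve on which $\zeta^{\prime\prime}/\zeta^\prime$ is positive real simply cannot go there. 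This sign is exactly the barrier the proof needs. The same mechanism disposes of the critical line, which means the step you single out as the ``main obstacle'' (a monotone-argument estimate for $\zeta^{\prime\prime}/\zeta^\prime$ on $\sigma=1/2$ requiring fresh Hadamard-product bookkeeping) is not needed at all: since $\zeta^{\prime\prime}/\zeta^\prime=\eta^\prime/\eta-h^\prime/h$, Lemma~\ref{Lemma:ZhangGe} already gives $\tre(\eta^\prime/\eta(1/2+it))<0$, while Stirling gives $\tre(h^\prime/h(1/2+it))=\tfrac12\log(t/2\pi)+O(1/t^2)>0$, so $\tre(\zeta^{\prime\prime}/\zeta^\prime(1/2+it))<0$ in one line. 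The level curve $\arg(\zeta^{\prime\prime}/\zeta^\prime)=0$, being the locus where the function is positive real, is therefore trapped between the critical line and a right half-plane where the real part is negative, and must terminate; no orientation-crossing argument in the style of Theorem~\ref{Thm:classification} is required.

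Your mechanism for ruling out a curve joining two zeros of $\zeta^\prime$ (or two zeros of $\zeta^{\prime\prime}$) is also left vague. The clean statement is that the level curve is the preimage under $\zeta^{\prime\prime}/\zeta^\prime$ of the positive real axis, a path from $0$ to $\infty$ on the Riemann sphere; equivalently $\log|\zeta^{\prime\prime}/\zeta^\prime|$, being the conjugate harmonic function of the argument, is strictly monotone along the level curve, so the curve can only run from a pole of the ratio (a zero of $\zeta^\prime$) to a zero of the ratio (a zero of $\zeta^{\prime\prime}$). Your local exit-direction computation is also more complicated than necessary: since $\zeta^{\prime\prime}/\zeta^\prime(s)=1/(s-\rho^\prime)+O(1)$, the curve exits where $s-\rho^\prime$ is positive real, i.e.\ directly to the right, with no dependence on $\arg(\zeta^{\prime\prime}(\rho^\prime)/\zeta^{\prime\prime\prime}(\rho^\prime))$. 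Your remark that the whole argument transfers to $\zeta^{(k+1)}/\zeta^{(k)}$ is consistent with the paper, but as written the proposal both relies on a false asymptotic and routes the proof through an estimate that is harder than the one actually needed.
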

\begin{proof}

Lemma \ref{Lemma:ZhangGe} in \S\ref{S:classify}  already shows that the real part of $\eta^\prime/\eta(s)$ is negative on the critical line $s=1/2+it$ ($t>3$).  To obtain the same result for $\zeta^{\prime\prime}/\zeta^\prime(s)$, we simply subtract off the real part of $h^\prime/h(s)$, for $h(s)=\pi^{-s/2}\Gamma(s/2)$.  By Stirling's Formula this is $\log(t/(2\pi))/2+O(1/t^2)$, so we deduce that the real part of $\zeta^{\prime\prime}/\zeta^\prime(s)$ is negative as well.  Meanwhile, as $\sigma\to +\infty$, $\zeta^{\prime\prime}/\zeta^\prime(s)\to -\log(2)$ and so the real part of $\zeta^{\prime\prime}/\zeta^\prime(s)$ will again be negative.

Now fix a zero $\rho^\prime$ of $\zeta^\prime(s)$, and consider the level curves with 
\linebreak
$\arg\left(\zeta^{\prime\prime}/\zeta^\prime(s)\right)=0$ exiting the pole at $\rho^\prime$.   By the above observations, this contour can't cross the critical line, nor extend too far into the right half plane.  The only possibility is that it terminates.  To finish the argument we must be sure that a contour 
$\arg(\zeta^{\prime\prime}/\zeta^\prime(s))=0$ can not connect a zero of $\zeta^{\prime\prime}(s)$ to another zero nor a pole (i.e. zero of $\zeta^\prime(s)$) to another pole.  But this contour is the inverse image under $\zeta^\pp/\zeta^\prime$ of the positive real axis, connecting $0$ to $\infty$ on the Riemann sphere.  It can only connect zeros to poles.

For $s$ near a zero $\rho^\prime$ of $\zeta^\prime$, elementary manipulations of series expansions give that
$$
\frac{\zeta^{\prime\prime}}{\zeta^\prime}(s)=\frac{1}{s-\rho^\prime}+O(1),
$$
so the contour $\arg(\zeta^{\prime\prime}/\zeta^\prime(s))=0$ has to exit the pole to the right, and so the zeros of $\zeta^{\prime\prime}$ will typically be to the right of the zeros of $\zeta^\prime$.  Similarly, a contour with $\arg(\zeta^{\prime\prime}/\zeta^\prime(s))=0$ terminating in a zero of $\zeta^{\prime\prime}(s)$, when followed backwards, must originate in a pole, i.e., a zero of $\zeta^\prime(s)$.
\end{proof}

This same argument works for zeros of higher derivatives as well. 



\section{Adaptation of a Theorem of Marden}\label{S:Marden}

This section is inspired by the results in \cite{Marden1, Marden2}, which express the logarithmic derivative of an entire function $f$  as a sum over poles (zeros of $f$) weighted by rational expressions in a fixed set of zeros of $f$ and $f^\prime$.   Marden's proof of \cite[Theorem 2.1]{Marden1} via the Cauchy Integral Formula can be generalized to $f(s)=\zeta^\prime(s)$, but in fact  \cite[Theorem 2.1]{Marden1}  and \cite[Theorem 2.1]{Marden2} can be proved simply by a partial fraction decomposition and taking linear combinations of the  Hadamard logarithmic derivative, as shall see.

We denote the real zeros of $\zeta^\prime(s)$ as $-a_n$, with $a_n\in (2n,2n+2)$.  In fact,
\[
-a_n=-2n-2+\frac{1}{\log(n)}+O\left(\frac{1}{\log^2(n)}\right).
\]

The following proposition will be needed to investigate the curvature $\kappa$ of the level curve at a fixed zero $\rho^\prime$ of $\zeta^\prime(s)$.
A \emph{generic} non-real zero of $\zeta^\prime(s)$ will be denoted as $\lambda^\prime$ in this subsection.  

\begin{proposition}\label{P:Marden}
Fix a complex zero $\rho^\pp=\beta^\pp+i\gamma^\pp$ of $\zeta^\pp(s)$ with $\beta^\pp>1/2$.  With $s=\sigma+it$ in a vertical strip $a\le\sigma\le b$ we have
\begin{multline}\label{Eq:Marden}
\frac{\zeta^\pp}{\zeta^\prime}(s)=\frac{\log(\gamma^\pp/t)}{2}
+
\sum_{\lambda^\prime}\frac{\rho^\pp-s}{(s-\lambda^\prime)(\rho^\pp-\lambda^\prime)}\\
+O\left(\frac{1}{t}\right)+O\left(\frac{1}{\gamma^\pp}\right).
\end{multline}
The sum is uniformly convergent on compact sets.
\end{proposition}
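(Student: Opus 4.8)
The plan is to prove \eqref{Eq:Marden} by the device behind Marden's formulas: differentiate the Hadamard factorization of $(s-1)^2\zeta^\prime(s)$, subtract the resulting expression for $\zeta^\pp/\zeta^\prime$ evaluated at the \emph{fixed} zero $\rho^\pp$ so as to cancel the unknown Hadamard constant, and then recognize the sum over the real zeros of $\zeta^\prime$ as $h^\prime/h$, up to errors of size $O(1/t)+O(1/\gamma^\pp)$, by Stirling's formula.

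First I would set $g(s)=(s-1)^2\zeta^\prime(s)$, which is entire of order $1$, with zeros exactly the real zeros $-a_n$ ($n\ge 1$, one in each $(-2n-2,-2n)$, as in \S\ref{S:classify}) together with the non-real zeros $\lambda^\prime$ of $\zeta^\prime(s)$. Hadamard gives $g^\prime/g(s)=B+\sum_{\rho_g}\bigl(\tfrac1{s-\rho_g}+\tfrac1{\rho_g}\bigr)$, and since $g^\prime/g=\tfrac2{s-1}+\zeta^\pp/\zeta^\prime$ this reads $\zeta^\pp/\zeta^\prime(s)=B-\tfrac2{s-1}+\sum_{\rho_g}\bigl(\tfrac1{s-\rho_g}+\tfrac1{\rho_g}\bigr)$. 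Assuming, as in \S\ref{S:Spira}, that $\rho^\pp$ is not also a zero of $\zeta^\prime(s)$, the left side of this identity vanishes at $s=\rho^\pp$; subtracting the two instances and using $\tfrac1{s-\rho_g}-\tfrac1{\rho^\pp-\rho_g}=\tfrac{\rho^\pp-s}{(s-\rho_g)(\rho^\pp-\rho_g)}$ eliminates $B$ and yields the exact identity
\[
\frac{\zeta^\pp}{\zeta^\prime}(s)=-\frac{2}{s-1}+\frac{2}{\rho^\pp-1}+\sum_{n\ge1}\frac{\rho^\pp-s}{(s+a_n)(\rho^\pp+a_n)}+\sum_{\lambda^\prime}\frac{\rho^\pp-s}{(s-\lambda^\prime)(\rho^\pp-\lambda^\prime)}.
\]
The last sum is the one in \eqref{Eq:Marden}; its terms are $O\bigl(|\rho^\pp-s|/|\lambda^\prime|^2\bigr)$, so uniform convergence on compact sets follows from $\sum_{\lambda^\prime}|\lambda^\prime|^{-2}<\infty$, which in turn follows by partial summation from the zero count $N_0^\prime(T)+N_1(T)+N_2(T)\ll T\log T$ of \eqref{Eq:2}.

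It remains to show that the part carried by the $-a_n$ and the pole at $1$ equals $\tfrac12\log(\gamma^\pp/t)+O(1/t)+O(1/\gamma^\pp)$. Writing $a_n=2n+c_n$ with $0<c_n<2$, I split $\tfrac1{s+a_n}-\tfrac1{\rho^\pp+a_n}$ as $\bigl(\tfrac1{s+2n}-\tfrac1{\rho^\pp+2n}\bigr)$ plus $-c_n\bigl[\tfrac1{(s+a_n)(s+2n)}-\tfrac1{(\rho^\pp+a_n)(\rho^\pp+2n)}\bigr]$; summed over $n$ and split at $2n\asymp t$ (resp.\ $2n\asymp\gamma^\pp$) the correction is $O(1/t)+O(1/\gamma^\pp)$. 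For the main piece, the partial-fraction expansion of $\psi(z)=\Gamma^\prime/\Gamma(z)$ together with $h^\prime/h(s)=-\tfrac12\log\pi+\tfrac12\psi(s/2)$ gives $\sum_{n\ge1}\bigl(\tfrac1{s+2n}-\tfrac1{\rho^\pp+2n}\bigr)=\tfrac1{\rho^\pp}-\tfrac1s-\tfrac{h^\prime}{h}(s)+\tfrac{h^\prime}{h}(\rho^\pp)$. Collecting terms, the rational pieces $-\tfrac2{s-1}-\tfrac1s$ and $\tfrac2{\rho^\pp-1}+\tfrac1{\rho^\pp}$ are $O(1/t)$ and $O(1/\gamma^\pp)$, and Stirling in the form $h^\prime/h(w)=\tfrac12\log(w/2\pi)+O(1/|w|)$ turns $\tfrac{h^\prime}{h}(\rho^\pp)-\tfrac{h^\prime}{h}(s)$ into $\tfrac12\log(\rho^\pp/s)+O(1/t)+O(1/\gamma^\pp)$. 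Finally, for $s=\sigma+it$ in the bounded strip and $\rho^\pp=\beta^\pp+i\gamma^\pp$ one has $s=it\,(1+O(1/t))$ and $\rho^\pp=i\gamma^\pp\,(1+O(1/\gamma^\pp))$, so the two factors of $i\pi/2$ in the logarithms cancel and $\log(\rho^\pp/s)=\log(\gamma^\pp/t)+O(1/t)+O(1/\gamma^\pp)$ --- a real quantity, as the statement requires. Assembling these gives \eqref{Eq:Marden}.

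The bookkeeping of the $c_n$-correction and of the stray rational terms is routine; the one place needing care is forcing the error in the $-a_n$ sum down to $O(1/t)$ rather than $O\bigl((\log t)/t\bigr)$: when $2n<t$ each of $|s+a_n|,\,|s+2n|$ already exceeds $t$, so those $\sim t/2$ terms contribute only $O(1/t)$, while the tail $2n\ge t$ contributes $\sum_{2n\ge t}n^{-2}=O(1/t)$. I also want the implied constants to depend only on the strip and on $\beta^\pp$ (which enters $\log|\rho^\pp|-\log\gamma^\pp$ and $\arg\rho^\pp-\pi/2$ only at scale $O(1/\gamma^\pp)$), and to keep $s$ bounded away from the poles $1,-a_n$ --- automatic for $t$ large, and harmless in the intended application, where $s$ lies near $\rho^\prime$ with $\tre(\rho^\prime)\ge 1/2$.
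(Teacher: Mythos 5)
Your proof is correct and follows essentially the same route as the paper's: both start from the Hadamard partial-fraction expansion of $\zeta^\pp/\zeta^\prime$, subtract the expansion evaluated at the fixed zero $\rho^\pp$ to eliminate the constant, replace the sum over the trivial zeros $-a_n$ by the digamma series at the cost of an $O(1/t)+O(1/\gamma^\pp)$ error (your $c_n$-correction is exactly the paper's inner lemma), and finish with Stirling applied to $\psi(\rho^\pp/2)-\psi(s/2)$. The only differences are cosmetic: you leave the Hadamard constant as an unevaluated $B$ rather than computing it as $\zeta^\pp/\zeta^\prime(0)-2$, and you phrase the trivial-zero comparison through $h^\prime/h$ rather than through $\psi$ directly.
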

\begin{proof}
The starting point is the partial fractions representation
\begin{multline}\label{Eq:new2}
\frac{\zeta^\pp}{\zeta^\prime}(s)=\frac{\zeta^\pp}{\zeta^\prime}(0)-2-\frac{2}{s-1}+\\
\sum_n\left(\frac{1}{s+a_n}-\frac{1}{a_n}\right)+\sum_{\lambda^\prime}\left(\frac{1}{s-\lambda^\prime}+\frac{1}{\lambda^\prime}\right)
\end{multline}
which follows from the Hadamard theory.   From \eqref{Eq:new2} subtract $0=\zeta^\pp/\zeta^\prime(\rho^\pp)$ to obtain
\begin{multline*}
\frac{\zeta^\pp}{\zeta^\prime}(s)=-\frac{2}{s-1}+\frac{2}{\rho^\pp-1}\\
+\sum_n\left(\frac{1}{s+a_n}-\frac{1}{\rho^\pp+a_n}\right)+\sum_{\lambda^\prime}\frac{\rho^\pp-s}{(s-\lambda^\prime)(\rho^\pp-\lambda^\prime)}.
\end{multline*}
Add and subtract $\psi(s/2)/2$ where the digamma function $\psi(s)=\Gamma^\prime/\Gamma(s)$.  From the series representation for the digamma function we see that
\begin{multline*}
\frac{\zeta^\pp}{\zeta^\prime}(s)=-\frac{1}{2}\psi(s/2)-\frac{C}{2}-\frac{1}{s}-\frac{2}{s-1}+\frac{2}{\rho^\pp-1}\\
+\sum_n\left(\frac{1}{2n}-\frac{1}{s+2n}+\frac{1}{s+a_n}-\frac{1}{\rho^\pp+a_n}\right)+\sum_{\lambda^\prime}\frac{\rho^\pp-s}{(s-\lambda^\prime)(\rho^\pp-\lambda^\prime)}.
\end{multline*}
(Here $C$ is the Euler constant.)  We regroup the terms to obtain
\begin{multline*}
\frac{\zeta^\pp}{\zeta^\prime}(s)=-\frac{1}{2}\psi(s/2)-\frac{C}{2}-\frac{1}{s}-\frac{2}{s-1}+\frac{2}{\rho^\pp-1}+\\
\sum_n\frac{2n-a_n}{(s+2n)(s+a_n)}+
\sum_n\left(\frac{1}{2n}-\frac{1}{\rho^\pp+a_n}\right)+
\sum_{\lambda^\prime}\frac{\rho^\pp-s}{(s-\lambda^\prime)(\rho^\pp-\lambda^\prime)}.
\end{multline*}
\begin{lemma}
We have
\[
-\frac{1}{s}-\frac{2}{s-1}+\sum_n\frac{2n-a_n}{(s+2n)(s+a_n)}=O\left(\frac{1}{t}\right)
\]
\end{lemma}
\begin{proof}
The numerator $2n-a_n$ of the summand is $O(1)$, while
\begin{multline*}
\sum_n\frac{1}{(s+2n)(s+a_n)}\ll |s|^2\sum_n\frac{1}{(4n^2+t^2)^2}+\\
|s|\sum_n\frac{4n}{(4n^2+t^2)^2}+\sum_n\frac{4n^2}{(4n^2+t^2)^2}.
\end{multline*}
The first and last sum on the right have complicated closed forms in terms of $\sinh$, $\cosh$, $\coth$, and $\csch$, while the middle sum is expressed in terms of $\psi^\prime$, the derivative of the digamma function.  Including the leading factors $|s|^2$, $|s|$ and $1$, each is $O(1/t)$.
\end{proof}
We add
\[
0=\frac{1}{2}\psi(\rho^\pp/2)+\frac{C}{2}+\frac{1}{\rho^\pp}+\sum_n\left(\frac{1}{\rho^\pp+2n}-\frac{1}{2n}\right)
\]
to see that
\begin{multline*}
\frac{\zeta^\pp}{\zeta^\prime}(s)=\frac{\psi(\rho^\pp/2)-\psi(s/2)}{2}
+\sum_{\lambda^\prime}\frac{\rho^\pp-s}{(s-\lambda^\prime)(\rho^\pp-\lambda^\prime)}
\\
+\frac{1}{\rho^\pp}+\frac{2}{\rho^\pp-1}+\sum_n\left(\frac{1}{\rho^\pp+2n}-\frac{1}{\rho^\pp+a_n}\right)+O\left(\frac{1}{t}\right).
\end{multline*}
Via the lemma,
\[
\frac{1}{\rho^\pp}+\frac{2}{\rho^\pp-1}
+
\sum_n\left(\frac{1}{\rho^\pp+2n}-\frac{1}{\rho^\pp+a_n}\right)=O\left(\frac{1}{\gamma^\pp}\right).
\]
Stirling's formula gives
\[
\frac{\psi(\rho^\pp/2)-\psi(s/2)}{2}=\frac{1}{2}\log(\gamma^\pp/t)+O\left(\frac{1}{t}\right)+O\left(\frac{1}{\gamma^\pp}\right).
\]
\end{proof}

\begin{figure}
\begin{center}
\includegraphics[scale=1.1, viewport=0 0 150 500,clip]{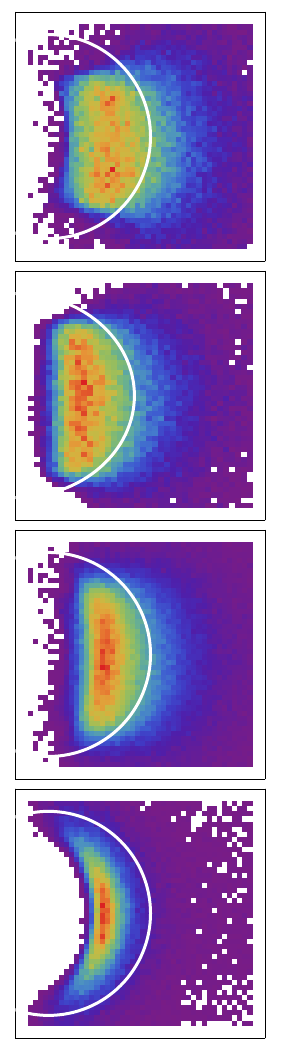}
\caption{Position of $\rho^{\prime\prime}$ relative to $\rho^\prime$, \\\hspace{\textwidth}
shown by quartiles of $(\beta^\prime-1/2)\log \gamma^\prime$}\label{F:10}
\end{center}
\end{figure}

As a consequence of  Proposition \ref{P:Marden} and Stirling's formula, we note that for
$F(t)=-\tre\left(\eta^\prime/\eta\left(\frac12+i t\right)\right)$ as in Lemma \ref{Lemma:ZhangGe}, we have
\begin{multline}\label{Eq:newF(t)}
F(t)=-\frac{1}{2}\log(\gamma^\pp/2\pi)-
\sum_{\lambda^\prime}\tre\left(\frac{\rho^\pp-s}{(s-\lambda^\prime)(\rho^\pp-\lambda^\prime)}\right)\\
+O\left(\frac{1}{t}\right)+O\left(\frac{1}{\gamma^\pp}\right).
\end{multline}

\begin{theorem}\label{Thm:fund}  Let $\rho^\prime$ denote the zero of $\zeta^\prime(s)$ canonically associated  via Theorem \ref{Thm4} to $\rho^\pp$.  Subtracting \eqref{Eq:FanGe}  from \eqref{Eq:newF(t)} gives
\begin{equation}\label{Eq:fund}
\tre\left(\frac{1}{\rho^\pp-\rho^\prime}\right)+\sum_{\lambda^\prime\ne\rho^\prime}\tre\left(\frac{1}{\rho^\pp-\lambda^\prime}\right)=\frac{\log(\gamma^\pp/\pi)}{2}+O\left(\frac{1}{\gamma^\pp}\right).
\end{equation}
\end{theorem}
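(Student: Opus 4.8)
The plan is to exploit the fact that \eqref{Eq:FanGe} and \eqref{Eq:newF(t)} are two asymptotic expansions of the \emph{same} quantity $F(t)=-\tre\left(\eta^\prime/\eta(1/2+it)\right)$: subtracting one from the other makes the left-hand sides cancel, and the single algebraic manipulation needed — the partial fraction splitting
\[
\frac{\rho^\pp-s}{(s-\lambda^\prime)(\rho^\pp-\lambda^\prime)}=\frac{1}{s-\lambda^\prime}-\frac{1}{\rho^\pp-\lambda^\prime},\qquad s=\tfrac12+it,
\]
makes the sum over the $\lambda^\prime$ cancel as well, leaving the asserted identity. First I would record that the two sums are over the same index set: under the standing assumptions (RH together with simplicity of the zeros of $\zeta$), Speiser's theorem excludes non-real zeros $\lambda^\prime$ of $\zeta^\prime$ with $\tre\lambda^\prime<1/2$, and Zhang's Lemma 1 excludes such zeros on the critical line (a zero of $\eta=h\zeta^\prime$ there would be a zero of $\zeta$, hence a multiple zero), so the complex zeros with $\tre\lambda^\prime>1/2$ appearing in \eqref{Eq:FanGe} are exactly the non-real zeros appearing in Proposition \ref{P:Marden}.

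Substituting the splitting identity into the sum in \eqref{Eq:newF(t)} rewrites that formula as
\[
F(t)=-\tfrac12\log\left(\gamma^\pp/2\pi\right)-\sum_{\lambda^\prime}\tre\frac{1}{s-\lambda^\prime}+\sum_{\lambda^\prime}\tre\frac{1}{\rho^\pp-\lambda^\prime}+O\left(\tfrac1t\right)+O\left(\tfrac1{\gamma^\pp}\right),
\]
and this split is legitimate because each of the two sums converges separately: $\sum_{\lambda^\prime}\tre\frac{1}{s-\lambda^\prime}$ equals $-F(t)+\log(2)/2+O(1/t)$ by \eqref{Eq:FanGe}, so $\sum_{\lambda^\prime}\tre\frac{1}{\rho^\pp-\lambda^\prime}$ is the difference of that sum and the (uniformly convergent) sum of Proposition \ref{P:Marden}. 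Subtracting \eqref{Eq:FanGe} from this rewritten form, both $F(t)$ and $\sum_{\lambda^\prime}\tre\frac{1}{s-\lambda^\prime}$ cancel, leaving
\[
0=-\tfrac12\log\left(\gamma^\pp/2\pi\right)-\tfrac12\log 2+\sum_{\lambda^\prime}\tre\frac{1}{\rho^\pp-\lambda^\prime}+O\left(\tfrac1t\right)+O\left(\tfrac1{\gamma^\pp}\right).
\]
Since $\tfrac12\log(\gamma^\pp/2\pi)+\tfrac12\log 2=\tfrac12\log(\gamma^\pp/\pi)$ and the remaining sum is independent of $t$, letting $t\to\infty$ removes the $O(1/t)$; pulling out the term $\lambda^\prime=\rho^\prime$, with $\rho^\prime$ the zero of $\zeta^\prime$ attached to $\rho^\pp$ by Theorem \ref{Thm4}, gives \eqref{Eq:fund}.

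The only point that needs checking — bookkeeping rather than a genuine obstacle — is that the $O(1/\gamma^\pp)$ in \eqref{Eq:newF(t)} is uniform in $t$, so that the limit $t\to\infty$ may legitimately be taken. Inspecting the proof of Proposition \ref{P:Marden}, that error term arises only from the $\rho^\pp$-dependent pieces $\tfrac1{\rho^\pp}+\tfrac2{\rho^\pp-1}+\sum_n(\cdots)$ and from the Stirling estimate of $\psi(\rho^\pp/2)$, none of which involves $t$, so uniformity is immediate and the argument is essentially a matter of carefully matching and cancelling the terms of the two expansions.
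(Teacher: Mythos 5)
Your proposal is correct and is exactly the computation the paper intends: the paper's ``proof'' is literally the instruction to subtract \eqref{Eq:FanGe} from \eqref{Eq:newF(t)} together with the remark that the $O(1/t)$ term drops out because the rest is independent of $t$, and you have simply filled in the details (the partial-fraction splitting $\frac{\rho^\pp-s}{(s-\lambda^\prime)(\rho^\pp-\lambda^\prime)}=\frac{1}{s-\lambda^\prime}-\frac{1}{\rho^\pp-\lambda^\prime}$, the matching of the two index sets under RH plus simplicity, the separate convergence of the split sums, and the $t$-independence of the $O(1/\gamma^\pp)$ error). The bookkeeping checks out, including $\tfrac12\log(\gamma^\pp/2\pi)+\tfrac12\log 2=\tfrac12\log(\gamma^\pp/\pi)$.
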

(The $O(1/t)$ term drops out as the rest of the expression is independent of $t$.)

Theorem \ref{Thm:fund} is a fundamental identity that relates the location of the $\rho^\pp$ associated to $\rho^\prime$ to the location  of all the  $\lambda^\prime\ne\rho^\prime$.  It gives a heuristic explanation of an intriguing new phenomena we see in the data.  Just as small gaps between the Riemann zeros tend to be associated with $\rho^\prime$ close to the critical line, $\rho^\prime$ close to the critical line has an effect on the position of the canonically associated zero $\rho^{\prime\prime}$ of $\zeta^{\prime\prime}(s)$ as well:   Figure \ref{F:10}  shows the 273,422 type 2 zeros sorted into quartiles of $(\beta^\prime-1/2)\log(\gamma^\prime)$.  So at the top of Figure \ref{F:10} we have data for the quartile furthest from the critical line: $(\beta^\prime-1/2)\log(\gamma^\prime)>1.443$.   At the bottom is the data for the quartile closest to the critical line: $(\beta^\prime-1/2)\log(\gamma^\prime)\le 0.487$.

For each quartile, we show a density histogram of the position of the canonically associated $\rho^{\pp}$ relative to $\rho^\prime+1/\log(\gamma^\prime)$, scaled by $\log(\gamma^\prime)$.  Red denotes the most points in a bin, purple the fewest.  With this normalization the circle is the unit circle, shown in white.  One sees more or less random behavior for the  quartile farthest from the critical line (top).  As we go to  quartiles  for smaller values of $(\beta^\prime-1/2)\log(\gamma^\prime)$, the zeros $\rho^{\pp}$ seem to be both less likely to be near $\rho^\prime$, and more likely to be near the circle.  

Heuristically, if $\rho^\prime$ is close to the critical line, with  $\rho^\pp$ lying to the right of $\rho^\prime$ we expect there to be $\lambda^\prime$ with $\tre(1/(\rho^\pp-\lambda^\prime))$ positive as well as negative, so there is cancellation in the sum \eqref{Eq:fund}.  So we expect that when $\rho^\prime$ is close to the critical line,
\[
\tre\left(\frac{1}{\rho^\pp-\rho^\prime}\right)\approx\frac{\log(\gamma^\pp/\pi)}{2}\approx\frac{\log(\gamma^\prime)}{2}.
\]
Since the level curves $\tre(1/(z-\rho^\prime))=c/2$ are circles with center $\rho^\prime+1/c$ and  radius $1/c$, we expect that  $\rho^\pp$ lies near a circle of radius  approximately $1/\log(\gamma^\prime)$ and center $\rho^\prime+1/\log\gamma^\prime$.\label{page12}

\section{Curvature at $\rho^\prime$ in terms of all $\lambda^\prime\ne\rho^\prime$}\label{S:application}

To study the curvature, we first need two auxiliary results on the location of $\gamma^\prime$ relative to $\gamma^+$, $\gamma^-$, and on $\theta=\arg(\eta^\prime(\rho^\prime))$.

We claim that $\gamma^\prime$ is very near to $t_0=(\gamma^++\gamma^-)/2$ when either $\beta^\prime-1/2$ is small or $\gamma^+-\gamma^-$ is  small.  In fact, borrowing the notation of  \cite[p.50-51]{Stopple.Lehmer} we introduce $\Delta, t_0$, $Y$, and $\lambda$ defined by\label{page13}
\begin{gather*}
\Delta=\gamma^+-\gamma^-,\qquad \lambda=\log(t_0/2\pi),\\
\rho^{\pm}=1/2+i(t_0\pm \Delta/2),\qquad
\rho^\prime=\beta^\prime+i(t_0+Y)
\end{gather*}
so $\gamma^\prime=t_0+Y$.
We  rescale with 
\[
x=(\beta^\prime-1/2)\lambda,\qquad y=Y\lambda,\qquad\delta=\Delta\lambda/2\pi.
\]
In \cite[p.50-51]{Stopple.Lehmer} we developed series expansions
\begin{gather*}
x(\delta)=\frac{\pi^2}{4}\left(1-\frac{\log(\pi)}{\lambda}\right)\delta^2+O(\delta^4),\\
y(\delta)=\frac{\pi^2}{2\lambda}\left(\frac{\pi}{4}+\sum_{\rho\ne\rho^\pm}\frac{1}{t_0-\gamma}\right)\,\, \delta^2+O(\delta^4).
\end{gather*}
In the  first, we estimate $\delta^2$ in terms of $x$ and plug into the second.  We neglect the sum over $\rho\ne\rho^\pm$, which should show significant cancellation.  (For an individual example of a $\rho^\prime$, there may be an imbalance with more nearby $\rho$ above $\rho^+$ than below $\rho^-$ or vice versa.  But as we will be considering an infinite sequence of type 2 pairs, the only  way the result could fail is if all but finitely many of the pairs showed such an imbalance, which is not plausible.)\ \   Converting back to the original variables we get
\begin{equation}\label{Eq:last}
\gamma^\prime=t_0+O\left(\gamma^+-\gamma^-\right)^2,\,\,\text{ and }\,\,   \gamma^\prime = t_0+O\left(\frac{\beta^\prime-1/2}{\log(\gamma^\prime)}\right).
\end{equation}

We will next investigate the angles $\theta$, by comparing to the argument of $\eta(1/2+i\gamma^\prime)$.  We observe that the argument of $\eta(1/2+it)$ changes from $-\pi/2$ to $\pi/2 \bmod 2\pi$ (or the reverse) as $t$ increases from $\gamma^-$ to $\gamma^+$, so the argument of $\eta(1/2+it_0)$ will be very near to either $0$ or $\pi\bmod 2\pi$, and so will the argument of $\eta(1/2+i\gamma^\prime)$.  Since $\eta(\rho^\prime)=0$, the argument at $\rho^\prime$ is not defined, but there is a limiting value along the horizontal line $\sigma+i\gamma^\prime$ as $\sigma$ approaches $\beta^\prime$ from the left.  Consider the Taylor expansion of $\eta$ at $\rho^\prime$:
\[
\eta(\sigma+i\gamma^\prime)=h\zeta^{\prime\prime}(\rho^\prime)(\sigma-\beta^\prime)+O\left(\sigma-\beta^\prime\right)^2,
\]
so
\[
\frac{\eta(\sigma+i\gamma^\prime)}{\sigma-\beta^\prime}=h\zeta^{\prime\prime}(\rho^\prime)+O\left(\sigma-\beta^\prime\right).
\]
(We caution that this expression is for a \emph{fixed} $\rho^\prime$, and the big $O$ expression holds for $\sigma-\beta^\prime\to 0$.)
Because $\sigma-\beta^\prime$ is negative and real for $1/2<\sigma<\beta^\prime$, we see that
\begin{equation}\label{Eq:thetaest}
\lim_{\sigma\to\beta^\prime}\arg\left(\eta(\sigma+i\gamma^\prime)\right)\equiv \arg\left(h\zeta^{\prime\prime}(\rho^\prime)\right)+\pi\bmod 2\pi.
\end{equation}
Note the shift by $\pi$ modulo $2\pi$: when the argument of $\eta(1/2+i\gamma^\prime)$ is very near to $0$ (resp.\ $\pi$),  \eqref{Eq:thetaest} implies $\theta=\arg(h\zeta^{\prime\prime}(\rho^\prime))$ is near $\pi$ (resp.\ $0$) modulo $2\pi$.   
This in turn implies that $\exp(-i\theta)$ is  near $-1$ (resp.\ $1$), and does not contribute significantly to the formula \eqref{Eq:kappa2} for $|\kappa|$. 

The next proposition relates the remaining parameters determining the curvature of the level curve at one zero $\rho^\prime$ of $\zeta^\prime(s)$ to the locations of all the other zeros $\lambda^\prime\ne\rho^\prime$ of $\zeta^\prime(s)$.

\begin{proposition}   Let $\rho^\prime$ be a zero of $\zeta^\prime(s)$.  Then
\begin{multline}\label{Eq:Prop13}
\tre\left(2\frac{h^\prime}{h}(\rho^\prime)+\frac{\zeta^{\prime\prime\prime}}{\zeta^{\pp}}(\rho^\prime)\right)=\\
-\log(2)-2\sum_{\lambda^\prime\ne\rho^\prime}\tre\left(\frac{1}{\lambda^\prime-(1/2+i\gamma^\prime)}\right)\\
+O\left(\beta^\prime-1/2\right)+O\left(\frac{1}{\gamma^\prime}\right).
\end{multline}
\end{proposition}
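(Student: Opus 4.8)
The plan is to split $2h^\prime/h(\rho^\prime)+\zeta^{\prime\prime\prime}/\zeta^{\pp}(\rho^\prime)$ into its two natural pieces, estimate the archimedean factor by Stirling and the ratio of zeta-derivatives by means of Proposition \ref{P:Marden}, and then eliminate all dependence on the zero $\rho^{\pp}$ of $\zeta^{\pp}$ associated to $\rho^\prime$ by feeding in the identity of Theorem \ref{Thm:fund}. (Recall that $2h^\prime/h(\rho^\prime)+\zeta^{\prime\prime\prime}/\zeta^{\pp}(\rho^\prime)=\eta^{\prime\prime}/\eta^\prime(\rho^\prime)$, so this is exactly the bracket entering the curvature formula \eqref{Eq:kappa2}.)

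For the archimedean piece, $h^\prime/h(s)=\tfrac12\bigl(\psi(s/2)-\log\pi\bigr)$, so Stirling gives $2h^\prime/h(\rho^\prime)=\log(\rho^\prime/2\pi)+O(1/\gamma^\prime)$ and hence $\tre\bigl(2h^\prime/h(\rho^\prime)\bigr)=\log(\gamma^\prime/2\pi)+O(1/\gamma^\prime)$. For the zeta-derivative piece, since $\rho^\prime$ is a simple zero of $\zeta^\prime$ we have $\zeta^{\pp}/\zeta^\prime(s)=(s-\rho^\prime)^{-1}+\tfrac12\,\zeta^{\prime\prime\prime}/\zeta^{\pp}(\rho^\prime)+O(s-\rho^\prime)$, so $\zeta^{\prime\prime\prime}/\zeta^{\pp}(\rho^\prime)=2\lim_{s\to\rho^\prime}\bigl(\zeta^{\pp}/\zeta^\prime(s)-(s-\rho^\prime)^{-1}\bigr)$. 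I substitute the formula of Proposition \ref{P:Marden}, taking $\rho^{\pp}$ to be the zero of $\zeta^{\pp}$ canonically associated to $\rho^\prime$ by Theorem \ref{Thm4}, and evaluate this limit term by term: in the sum $\sum_{\lambda^\prime}(\rho^{\pp}-s)/\bigl((s-\lambda^\prime)(\rho^{\pp}-\lambda^\prime)\bigr)$ the $\lambda^\prime=\rho^\prime$ term equals $(s-\rho^\prime)^{-1}-(\rho^{\pp}-\rho^\prime)^{-1}$, hence leaves $-(\rho^{\pp}-\rho^\prime)^{-1}$ after subtracting $(s-\rho^\prime)^{-1}$; the terms $\lambda^\prime\ne\rho^\prime$ leave $\sum_{\lambda^\prime\ne\rho^\prime}(\rho^{\pp}-\rho^\prime)/\bigl((\rho^\prime-\lambda^\prime)(\rho^{\pp}-\lambda^\prime)\bigr)$; the leading term $\tfrac12\log(\gamma^{\pp}/t)$ becomes $\tfrac12\log(\gamma^{\pp}/\gamma^\prime)$; and the $O(1/t)$ remainder of Proposition \ref{P:Marden}, which is holomorphic near $\rho^\prime$, evaluates there to $O(1/\gamma^\prime)$. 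Using $\gamma^{\pp}=\gamma^\prime+O(1)$ (a quantitative form of Spira's observation, read off from Theorem \ref{Thm4}) to absorb $\log(\gamma^{\pp}/\gamma^\prime)$, this yields
\[
\frac{\zeta^{\prime\prime\prime}}{\zeta^{\pp}}(\rho^\prime)=-\frac{2}{\rho^{\pp}-\rho^\prime}+2\sum_{\lambda^\prime\ne\rho^\prime}\frac{\rho^{\pp}-\rho^\prime}{(\rho^\prime-\lambda^\prime)(\rho^{\pp}-\lambda^\prime)}+O\Bigl(\frac1{\gamma^\prime}\Bigr).
\]

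Adding the two pieces, taking real parts, and then applying Theorem \ref{Thm:fund} in the form $-2\tre\bigl((\rho^{\pp}-\rho^\prime)^{-1}\bigr)=-\log(\gamma^{\pp}/\pi)+2\sum_{\lambda^\prime\ne\rho^\prime}\tre\bigl((\rho^{\pp}-\lambda^\prime)^{-1}\bigr)+O(1/\gamma^\prime)$ to eliminate the $(\rho^{\pp}-\rho^\prime)^{-1}$ term, two simplifications occur. First $\log(\gamma^\prime/2\pi)-\log(\gamma^{\pp}/\pi)=-\log2+O(1/\gamma^\prime)$, again by $\gamma^{\pp}=\gamma^\prime+O(1)$. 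Second the summand telescopes,
\[
\frac{1}{\rho^{\pp}-\lambda^\prime}+\frac{\rho^{\pp}-\rho^\prime}{(\rho^\prime-\lambda^\prime)(\rho^{\pp}-\lambda^\prime)}=\frac{(\rho^\prime-\lambda^\prime)+(\rho^{\pp}-\rho^\prime)}{(\rho^\prime-\lambda^\prime)(\rho^{\pp}-\lambda^\prime)}=\frac{1}{\rho^\prime-\lambda^\prime},
\]
so that $\rho^{\pp}$ disappears entirely and one is left with $-\log2-2\sum_{\lambda^\prime\ne\rho^\prime}\tre\bigl((\lambda^\prime-\rho^\prime)^{-1}\bigr)+O(1/\gamma^\prime)$. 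It remains to replace $\rho^\prime$ by $1/2+i\gamma^\prime$ in the sum; since
\[
\frac{1}{\lambda^\prime-\rho^\prime}-\frac{1}{\lambda^\prime-(1/2+i\gamma^\prime)}=\frac{\beta^\prime-1/2}{(\lambda^\prime-\rho^\prime)\bigl(\lambda^\prime-(1/2+i\gamma^\prime)\bigr)},
\]
the discrepancy is $(\beta^\prime-1/2)$ times a sum over $\lambda^\prime\ne\rho^\prime$ whose estimation produces the error term $O(\beta^\prime-1/2)$, and the Proposition follows.

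The algebra above is clean, and I expect the difficulty to lie entirely in the error bookkeeping, at two points. The first is the bound $\gamma^{\pp}=\gamma^\prime+O(1)$, which is what collapses both $\log(\gamma^{\pp}/\gamma^\prime)$ and $1/\gamma^{\pp}$ into $O(1/\gamma^\prime)$ and produces the bare $-\log2$; this should follow from the bijection of Theorem \ref{Thm4}, though it is not proved there verbatim. The second, and genuinely delicate, point is the final replacement of $\rho^\prime$ by $1/2+i\gamma^\prime$: the zero-sum $\sum_{\lambda^\prime\ne\rho^\prime}\bigl((\lambda^\prime-\rho^\prime)(\lambda^\prime-1/2-i\gamma^\prime)\bigr)^{-1}$ is governed by the horizontal spacing of the zeros of $\zeta^\prime$ near $\gamma^\prime$ and so is not $O(1)$ uniformly, so strictly speaking this step contributes $O(\beta^\prime-1/2)$ only up to a slowly growing factor — which is exactly what is needed, since it is then lower order than both the right-hand side and the curvature $\kappa\asymp\log\gamma^\prime$ precisely along the type 2 subsequences with $(\beta^\prime-1/2)\log\gamma^\prime\to0$ for which \S\ref{S:application} invokes the Proposition.
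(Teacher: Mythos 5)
Your proof is correct and follows essentially the same route as the paper --- Proposition \ref{P:Marden} plus a local expansion of $\zeta^\pp/\zeta^\prime$ near $\rho^\prime$, then Theorem \ref{Thm:fund} and the telescoping identity to eliminate $\rho^\pp$ --- the only structural difference being the evaluation point. The paper compares the two expressions for $\zeta^\pp/\zeta^\prime$ at $s=1/2+i\gamma^\prime$ rather than in the limit $s\to\rho^\prime$, so the sum over $\lambda^\prime\ne\rho^\prime$ emerges already anchored at $1/2+i\gamma^\prime$ and the entire $O(\beta^\prime-1/2)$ error sits in the Laurent remainder; your version puts it instead in the final shift of the sum from $\rho^\prime$ to $1/2+i\gamma^\prime$. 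Your worry about the uniformity of that last step is legitimate but moot here: the paper states explicitly, just after the Summary Conjecture, that the $O(\beta^\prime-1/2)$ and $O(1/\gamma^\prime)$ constants in \eqref{Eq:Prop13} are for a \emph{fixed} $\rho^\prime$, and the paper's own Laurent remainder carries the same $\rho^\prime$-dependence. Finally, you do not actually need $\gamma^\pp=\gamma^\prime+O(1)$, which Theorem \ref{Thm4} does not supply: if you carry the term $\log(\gamma^\pp/\gamma^\prime)$ coming from the leading term of Proposition \ref{P:Marden} along instead of absorbing it early, it cancels exactly against $\log(\gamma^\prime/2\pi)-\log(\gamma^\pp/\pi)=-\log 2-\log(\gamma^\pp/\gamma^\prime)$ at the Theorem \ref{Thm:fund} step --- this is how the paper obtains the bare $-\log 2$ --- and all that is then required is $1/\gamma^\pp=O(1/\gamma^\prime)$, which the paper asserts as immediate.
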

\begin{proof}
With $\rho^\prime=\beta^\prime+i\gamma^\prime$, let $\rho^{\prime\prime}=\beta^{\prime\prime}+i\gamma^{\prime\prime}$ be the zero of $\zeta^{\prime\prime}(s)$ canonically associated via Theorem \ref{Thm4}.
We evaluate $\zeta^\pp/\zeta^\prime(s)$ at $s=1/2+i\gamma^\prime$ using Proposition \ref{P:Marden}, and also via a Laurent expansion at $\rho^\prime$.  In fact the algebra of power series gives
\[
\frac{\zeta^\pp}{\zeta^\prime}(1/2+i\gamma^\prime)=\frac{1}{1/2-\beta^\prime}+\frac{\zeta^{\prime\prime\prime}}{2\zeta^{\pp}}(\rho^\prime)+O\left(\beta^\prime-1/2\right).
\]

Stirling's formula applied to $2h^\prime/h(\rho^\prime)$ and taking real parts shows that
\begin{multline}
\tre\left(2\frac{h^\prime}{h}(\rho^\prime)+\frac{\zeta^{\prime\prime\prime}}{\zeta^{\pp}}(\rho^\prime)\right)=
\log(\gamma^\pp/\pi)-\log(2)
+\\
2\sum_{\lambda^\prime\ne\rho^\prime}\tre\left(\frac{\rho^\pp-(1/2+i\gamma^\prime)}{((1/2+i\gamma^\prime)-\lambda^\prime)(\rho^\pp-\lambda^\prime)}\right)-\tre\left(\frac{2}{\rho^\pp-\rho^\prime}\right)\\
+O\left(\beta^\prime-1/2\right)+O\left(\frac{1}{\gamma^\prime}\right)+O\left(\frac{1}{\gamma^\pp}\right).
\end{multline}
We then use \eqref{Eq:fund} to replace
\[
\log(\gamma^\pp/\pi)-\tre\left(\frac{2}{\rho^\pp-\rho^\prime}\right)
\quad
\text { with  }
\quad
2\sum_{\lambda^\prime\ne\rho^\prime}\tre\left(\frac{1}{\rho^\pp-\lambda^\prime}\right).
\]
Finally, $1/\gamma^{\prime\prime}$ is certainly $O(1/\gamma^\prime)$.
\end{proof}

We now summarize what needs to be done to prove \eqref{Eq:varSound}.  For the forward implication, by (\ref{Eq:needestimate}), (\ref{Eq:kappa2}), and (\ref{Eq:Prop13}), it suffices to show:
\begin{sconjecture}  For  a sequence of type 2 zeros $\rho^\prime$ with 
\[(\beta^\prime-1/2)\log(\gamma^\prime)\to 0,
\]
we have
\begin{equation}\label{Eq:sconjecture}
\log(\gamma^\prime)\ll \sum_{\lambda^\prime\ne\rho^\prime}\tre\left(\frac{1}{\lambda^\prime-(1/2+i\gamma^\prime)}\right).
\end{equation}
\end{sconjecture}

We will show this in part II.  (To be clear, the $O(\beta^\prime-1/2)$ and $O(1/\gamma^\prime)$ terms in (\ref{Eq:Prop13}) are for a \emph{fixed} zero $\rho^\prime$.  In order that \eqref{Eq:sconjecture} imply \eqref{Eq:varSound}, one will need to understand how the implied constants vary with $\rho^\prime$ in order to neglect these terms.)

For the reverse implication, consider a sequence of type 2 pairs $\rho^+$, $\rho^-$, with 
\[
(\gamma^+-\gamma^-)\log((\gamma^++\gamma^-)/2))\to 0.
\]
We know from \cite[Theorem  2]{FanGe2}, that 
for any
$v<0.4$ the following holds: For all sufficiently large $\gamma^+$, $\gamma^-$ with $\Delta< v/\log t_0$, (notation as in page \pageref{page13}) the box
$$
\{s=\sigma+it: \frac{1}{2}<\sigma<\frac{1}{2}+\frac{v^2}{4\log t_0},
\gamma^-\le t\le \gamma^+\}
$$
contains exactly one zero $\rho^\prime$  of $\zeta^\prime(s)$.  To show the desired implication we just need to confirm for this type 2 pair $\gamma^-$, $\gamma^+$, that  $\rho^\prime$ as above is the canonically associated type 2 zero, and not some stray type 1 or  type 0.  This will also be done in part II.

 \section{Appendix: Random Matrix Analogs}\label{S:Appendix}
 
There's a productive analogy between the zeros of the Riemann zeta function $\zeta(s)$ and the zeros of the characteristic polynomial $p_A(z)$ of a unitary matrix $A$, and even the zeros of the respective derivatives.  In this analogy the critical line $\tre(s)=1/2$  corresponds to the unit circle $|z|=1$.  More precisely, the analogy relates zeros of $\zeta(s)$ at height $T$ in the critical strip  and zeros of $p_A(z)$ when $A$ is $n$ by $n$, for $n\approx \log(T)$.  On the number theory side, the function
\[
\eta(s)=\pi^{-s/2}\Gamma(s/2)\zeta^\prime(s)
\]
is very useful.   Here we develop an analog $\eta_A(z)$ for the characteristic polynomial of a unitary matrix $A$, and explore its applications.

In \S \ref{AS1}, Theorem \ref{AThm1} says that if
$\exp(i\theta^+)$ and $\exp(i\theta^-)$ are \lq consecutive\rq\ zeros of $p_A(z)$ with 
\[
\theta^+-\theta^-<\frac{2\pi}{1+6n},
\]
then the angular sector
\[
S(\theta^+,\theta^-)=\left\{ 1-2(\theta^+-\theta^-)< |z|,\,  \theta^-\le\arg(z)\le \theta^+  \right\}
\]
contains a zero of $p_A^\prime(z)$.

Some of the results in \S \ref{AS1} are also in \cite{GeGonek}; We think the introduction of the function $\eta_A(z)$ clarifies the analogy to the number theory side.

In \S \ref{AS2}, Theorem \ref{AThm2}  uses the level curves $\tre(\eta_A(z))=0$ to classify the zeros of $p_A(z)$ and $p_A^\prime(z)$, analogous to the classification of the zeros of $\zeta(s)$ and $\zeta^\prime(s)$ above.

\subsection{Results about location of zeros $p_A(z)$ and $p_A^\prime(z)$}\label{AS1}
Let $A$ be an $n\times n$ unitary matrix, $n>2$, with distinct eigenvalues and characteristic polynomial $p_A(z)=\det(A-z I)$.  

We define
\[
h_A(z)=(-z)^{-n/2}\det(A)^{-1/2},\qquad \Lambda_A(z)=h_A(z) p_A(z).
\]
If $n$ is even, $\Lambda_A$ has a pole of order $n/2$ at the origin.  If $n$ is odd, there is a branch cut from the origin to infinity.  (The location of the branch cut does not particularly matter, but when
we denote $\exp(i\theta^-)$, $\exp(i\theta^+)$ as consecutive zeros of $p_A(z)$, we will assume it does not pass between them.)\ \ 
Since
\begin{multline*}
(-z)^np_A(1/z)=\det(-z\cdot A+I)\\
=\det(A)\cdot\det(-z\cdot I+A^*)=\det(A)p_{A^*}(z).
\end{multline*}
we get that
\begin{equation}\label{AEq1}
\Lambda_A(z)=\Lambda_{A^*}(1/z).
\end{equation}
The function $h_A(z)$ is the matrix theory analog of the function $h(s)=\pi^{-s/2}\Gamma(s/2)$ which completes the Riemann zeta function to obtain the functional equation.  We define
\[
\eta_A(z)=z h_A(z)p_A^\prime(z);
\]
this will be the matrix theory analog of the function $\eta(s)=h(s)\zeta^\prime(s)$. (The need for the extra factor of $z$ will become apparent.)\ \ With this we can easily prove for characteristic polynomials of unitary matrices some interesting analogs of results for $\zeta(s)$ and $\zeta^\prime(s)$.

\begin{lemma}\label{ALemma1} The function $z\Lambda_A^\prime(z)$ is purely imaginary on $|z|=1$, while $z\Lambda_A^\prime(z)+z^2\Lambda_A^{\prime\prime}(z)$ is real on $|z|=1$.  \end{lemma}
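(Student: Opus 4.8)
The plan is to exploit the functional equation \eqref{AEq1} directly, in the same spirit that the symmetry $\eta(s) = \overline{\eta(1-\bar s)}$ (coming from the functional equation of $\zeta(s)$) forces $\tre(\eta(1/2+it))$ and related quantities to be real or imaginary on the critical line. First I would record the basic reflection identity: since $\Lambda_A(z) = \Lambda_{A^*}(1/z)$ and the eigenvalues of $A^*$ are the complex conjugates of those of $A$, one checks that $\Lambda_{A^*}(z) = \overline{\Lambda_A(\bar z)}$ for all $z$ (both sides are, up to the normalizing constants, built from $\prod(\text{eigenvalue} - z)$, and conjugating the coefficients conjugates the variable). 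Combining these two gives the key symmetry
\[
\Lambda_A(z) = \overline{\Lambda_A(1/\bar z)}.
\]
On the unit circle $|z|=1$ we have $1/\bar z = z$, so $\Lambda_A(z) = \overline{\Lambda_A(z)}$ — wait, that is too strong; the point is that the identity $\Lambda_A(z)=\overline{\Lambda_A(1/\bar z)}$ is an identity of functions, and differentiating it is what produces the two claimed statements, not evaluating it naively.

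Concretely, I would differentiate $\Lambda_A(z) = \overline{\Lambda_A(1/\bar z)}$ with respect to $z$. Writing $w = 1/\bar z$, the right-hand side is $\overline{\Lambda_A(w)}$ viewed as a function of $z$ through $\bar z$; one gets $\Lambda_A'(z) = \overline{\Lambda_A'(1/\bar z)} \cdot (1/z^2)$ (the factor $1/z^2$ coming from $d(1/\bar z)/d(\bar z) = -1/\bar z^2$ together with the conjugation, which flips the sign back). Rearranged, this says $z^2 \Lambda_A'(z) = \overline{\Lambda_A'(1/\bar z)}$, and on $|z|=1$ (so $1/\bar z = z$, $z^2 = z/\bar z$) this becomes $z^2 \Lambda_A'(z) = \overline{\Lambda_A'(z)}$, i.e. $z\Lambda_A'(z) \cdot (z/\bar z) \cdot \bar z = \overline{\Lambda_A'(z)}\cdot\bar z \cdot z /\ldots$ — I should instead multiply through cleanly: on $|z|=1$, $\overline{z\Lambda_A'(z)} = \bar z \,\overline{\Lambda_A'(z)} = \bar z\, z^2 \Lambda_A'(z) = z \Lambda_A'(z)\cdot (z\bar z) = z\Lambda_A'(z)\cdot|z|^2$... this gives $\overline{z\Lambda_A'(z)} = z\Lambda_A'(z)$, which is the \emph{wrong} sign. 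So I would instead keep careful track of the conjugation constant: the cleanest route is to set $G(z) = z\Lambda_A'(z)$ and show from the functional equation that $\overline{G(z)} = -G(z)$ on $|z|=1$ (the extra minus sign is exactly the one that appears on the number-theory side, where $\eta'/\eta$ has negative real part, i.e. the logarithmic derivative picks up a sign from the reflection $s \mapsto 1-s$).

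For the second claim, the natural observation is that $z\Lambda_A'(z) + z^2\Lambda_A''(z) = z\frac{d}{dz}\bigl(z\Lambda_A'(z)\bigr) = z G'(z)$ where $G(z) = z\Lambda_A'(z)$ is purely imaginary on $|z|=1$. The standard fact is that if $G$ is a holomorphic function that is purely imaginary on the unit circle, then $zG'(z)$ is real there: parametrize $z = e^{i\phi}$, so $G(e^{i\phi}) = i\psi(\phi)$ with $\psi$ real; then $\frac{d}{d\phi}G(e^{i\phi}) = ie^{i\phi}G'(e^{i\phi})$ equals $i\psi'(\phi)$, which is purely imaginary, hence $e^{i\phi}G'(e^{i\phi}) = zG'(z)$ is real. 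This is precisely the Cauchy–Riemann argument used on the number-theory side to pass from $F(t) = -\frac{d}{dt}\arg\eta(1/2+it)$ to statements about real and imaginary parts. So the second statement is a formal consequence of the first.

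The main obstacle is purely bookkeeping: getting the sign and the power of $z$ exactly right in differentiating the functional equation $\Lambda_A(z) = \overline{\Lambda_A(1/\bar z)}$, because the half-integer exponent $(-z)^{-n/2}$ in $h_A(z)$ contributes to $\Lambda_A'$ and one must verify the branch choices are consistent (for odd $n$ this requires the branch cut not to interfere, as flagged in the text). I would handle this by first checking the identity on the "bare" product $p_A(z)/\bigl((-z)^{n/2}\det(A)^{1/2}\bigr)$ using $p_{A^*}(z) = \overline{p_A(\bar z)}$ and $\overline{\det(A)} = \det(A)^{-1} = \det(A^*)$, confirming that the normalizing factor is chosen exactly so that $\Lambda_A(z) = \overline{\Lambda_A(1/\bar z)}$ holds with no stray constant; then the differentiation is mechanical. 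Once the functional equation for $\Lambda_A$ is pinned down with the right normalization, both parts of the lemma follow as above — the first by differentiating and restricting to $|z|=1$, the second by the "holomorphic and purely imaginary on the circle $\Rightarrow$ $zG'$ real" principle.
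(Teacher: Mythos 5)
Your overall strategy --- exploit the symmetry $\Lambda_A(z)=\overline{\Lambda_A(1/\bar z)}$ and differentiate --- is the right one, and your treatment of the second claim (if $G=z\Lambda_A'$ is purely imaginary on $|z|=1$, then $zG'(z)=z\Lambda_A'(z)+z^2\Lambda_A''(z)$ is real there) is correct and is in substance the paper's ``take another derivative in $\theta$'' step. But you talk yourself out of the cleanest proof of the first claim. The deduction you dismiss as ``too strong'' --- that $\Lambda_A(z)=\overline{\Lambda_A(z)}$, i.e.\ that $\Lambda_A$ is real-valued on $|z|=1$ --- is not too strong; it is true (with consistent branch choices one finds $\Lambda_A(e^{i\theta})=2^n\prod_j\sin\bigl((\theta_j-\theta)/2\bigr)$ where $e^{i\theta_j}$ are the eigenvalues), and it is exactly the paper's starting point. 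Once you have it, the lemma is immediate: $\theta\mapsto\Lambda_A(e^{i\theta})$ is a real-valued function of the real variable $\theta$, so its derivative $ie^{i\theta}\Lambda_A'(e^{i\theta})=iz\Lambda_A'(z)$ is real, i.e.\ $z\Lambda_A'(z)$ is purely imaginary; and its second derivative $-\bigl(z\Lambda_A'(z)+z^2\Lambda_A''(z)\bigr)$ is likewise real, which is the second claim.

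The route you actually pursue --- differentiating $\Lambda_A(z)=\overline{\Lambda_A(1/\bar z)}$ with respect to $z$ --- also works, but as written your proposal never completes it: you make a sign error, observe that the resulting conclusion $\overline{z\Lambda_A'(z)}=z\Lambda_A'(z)$ must be wrong, and then assert that careful bookkeeping would yield $\overline{G}=-G$ without actually doing the bookkeeping. The error is the parenthetical claim that ``the conjugation \ldots flips the sign back''; it does not. If $g(z)=\overline{f(\bar z)}$ with $f$ holomorphic, then $g'(z)=\overline{f'(\bar z)}$, so taking $f(w)=\Lambda_A(1/w)$ gives $\frac{d}{dz}\,\overline{\Lambda_A(1/\bar z)}=\overline{-\Lambda_A'(1/\bar z)/\bar z^{\,2}}=-\overline{\Lambda_A'(1/\bar z)}/z^2$: conjugation leaves the real factor $-1$ alone. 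Hence $z^2\Lambda_A'(z)=-\overline{\Lambda_A'(1/\bar z)}$, and on $|z|=1$ multiplying by $\bar z$ gives $z\Lambda_A'(z)=-\overline{z\Lambda_A'(z)}$, which is the statement you wanted. So the gap is small and confined to the arithmetic you yourself flagged, but a finished proof needs either this corrected computation or, better, the one-line argument from the reality of $\Lambda_A$ on the circle.
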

\begin{proof}
On the unit circle $|z|=1$
\[
\Lambda_A(z)=\Lambda_{A^*}(\overline{z})=\overline{\Lambda_A(z)}
\]
is real valued by  \eqref{AEq1}.  Write $z=\exp(i \theta)$, so
\[
\frac{d}{d\theta}\Lambda_A(\exp(i \theta))=  i \exp(i\theta)\Lambda^\prime(\exp(i\theta))=i z \Lambda_A^\prime(z).
\]
The imaginary part of this function is $0$ on the unit circle.  Similarly, another derivative with respect to $\theta$ gives the second claim.
\end{proof}
The above is the  analog of \cite[(2.1)]{Zhang}, while the following is the analog of \cite[Lemma 1]{Zhang}.
\begin{lemma}
On the unit circle $|z|=1$, we have
\[
p_A(z)=0 \Leftrightarrow \tre(\eta_A(z))=0.
\]
\end{lemma}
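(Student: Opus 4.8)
The plan is to mimic the proof of Zhang's Lemma 1 (as reflected here in Lemma \ref{ALemma1}), using the fact that on the unit circle $\Lambda_A$ is real-valued and the matrix analog of the relation between $\eta$ and $\zeta'$. First I would relate $\eta_A(z)$ to the quantity $z\Lambda_A'(z)$ appearing in Lemma \ref{ALemma1}. Since $\Lambda_A(z)=h_A(z)p_A(z)$, differentiating gives $\Lambda_A'(z)=h_A'(z)p_A(z)+h_A(z)p_A'(z)$, so
\[
z\Lambda_A'(z)=z h_A'(z)p_A(z)+z h_A(z)p_A'(z)=z h_A'(z)p_A(z)+\eta_A(z).
\]
Thus $\eta_A(z)=z\Lambda_A'(z)-z h_A'(z)p_A(z)$. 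On $|z|=1$, if $p_A(z)=0$ then the second term vanishes and $\eta_A(z)=z\Lambda_A'(z)$, which is purely imaginary by Lemma \ref{ALemma1}; hence $\tre(\eta_A(z))=0$. This handles the $\Rightarrow$ direction immediately.

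For the converse, suppose $z=\exp(i\theta)$ lies on the unit circle with $p_A(z)\ne 0$; I must show $\tre(\eta_A(z))\ne 0$. The key is to compute $z h_A'(z)/h_A(z)$ explicitly: since $h_A(z)=(-z)^{-n/2}\det(A)^{-1/2}$, logarithmic differentiation gives $h_A'(z)/h_A(z)=-n/(2z)$, so $z h_A'(z)=-\tfrac{n}{2}h_A(z)$ and therefore $z h_A'(z)p_A(z)=-\tfrac{n}{2}\Lambda_A(z)$, which is real on the unit circle. Combining, on $|z|=1$,
\[
\eta_A(z)=z\Lambda_A'(z)+\frac{n}{2}\Lambda_A(z),
\]
and since $z\Lambda_A'(z)$ is purely imaginary there while $\tfrac{n}{2}\Lambda_A(z)$ is real, we get $\tre(\eta_A(z))=\tfrac{n}{2}\Lambda_A(z)$. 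So $\tre(\eta_A(z))=0$ if and only if $\Lambda_A(z)=0$, i.e.\ if and only if $p_A(z)=0$ (the factor $h_A(z)$ being nonzero off the branch cut/pole at the origin). This is in fact cleaner than the number-theory case, where the $\Gamma$-factor contributes a nonzero real term $\sim\tfrac12\log(t/2\pi)$ rather than something proportional to $\Lambda$ itself.

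I do not expect a serious obstacle here; the only points requiring care are bookkeeping ones. One should note that when $n$ is odd $h_A(z)$ has a branch cut from $0$ to $\infty$, so one works on the unit circle minus the single point where the cut meets it, and the identity $h_A'/h_A=-n/(2z)$ holds on that domain regardless of the branch chosen. When $n$ is even there is a pole of order $n/2$ at the origin but this is irrelevant on $|z|=1$. It is also worth recording explicitly that the displayed identity $\tre(\eta_A(z))=\tfrac{n}{2}\Lambda_A(z)$ on $|z|=1$ is the analog of equation \eqref{Eq:ZLemma3}, with the constant $n/2$ playing the role of $\tfrac12\log(\gamma/2\pi)$ under the correspondence $n\approx\log T$; this observation motivates the extra factor of $z$ built into the definition of $\eta_A(z)$ and will presumably be reused in \S\ref{AS1}.
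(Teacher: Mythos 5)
Your proof is correct and takes essentially the same route as the paper's: both reduce to the identity $z\Lambda_A^\prime(z)=-\tfrac{n}{2}\Lambda_A(z)+\eta_A(z)$ and then invoke Lemma \ref{ALemma1} together with the reality of $\Lambda_A$ on $|z|=1$. The explicit relation $\tre(\eta_A(z))=\tfrac{n}{2}\Lambda_A(z)$ you isolate is just a restatement of the paper's concluding sentence, so no further comment is needed.
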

\begin{proof}
\begin{multline*}
z\Lambda^\prime(z)=z\left(h_A^\prime(z) p_A(z)+h_A(z)p_A^\prime(z)\right)\\
=z\left(\frac{h_A^\prime(z)}{h_A(z)}\Lambda_A(z)+h_A(z)p_A^\prime(z)\right)\\
=(-n/2)\Lambda_A(z)+zh_A(z)p_A^\prime(z).
\end{multline*}
By   Lemma \ref{ALemma1}, on the unit circle the real part of the left side of the equation is $0$, while $\Lambda_A(z)$ is already real valued.  So $\Lambda_A(z)$ is $0$ if and only if $\eta_A(z)$ is purely imaginary.
\end{proof}

The Lemma makes the level curves $\tre(\eta_A(z))=0$ of interest.  Figure \ref{AF:2} shows an example with $n=22$, using \emph{Mathematica} to generate a random matrix from the Circular Unitary Ensemble.
We use color to indicate the sign of $\tim(\eta_A(z))$.  
Green indicates $\tre(\eta_A(z))=0$ and $\tim(\eta_A(z)))$ is positive, while purple indicates $\tre(\eta_A(z))=0$ and 
$\tim(\eta_A(z))$ is negative.  The  zeros $\mu$ of $p_A(z)$ occur where the green or purple contours cross the unit circle.  The zeros $\mu^\prime$ of $p_A^\prime$ are visible exactly where the two colors come together.

\begin{figure}
\begin{center}
\includegraphics[scale=1, viewport=0 0 260 260,clip]{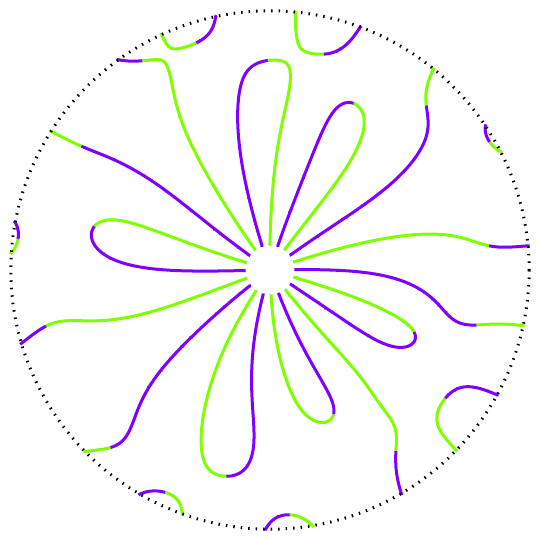}
\caption{An example with $n=22$: Level curves for $\tre(\eta_A(z))=0$}\label{AF:2}
\end{center}
\end{figure}

We define
\[
F(\theta)=\frac{d}{d\theta}\arg(\eta_A(\exp(i\theta))).
\]
This is the matrix analog of the function $F(t)$ in \cite[(2.13)]{Zhang}, and appears as (12) in \cite{GeGonek}.  In particular
\begin{multline*}
\frac{d}{d\theta}\tim(\log(\eta_A(\exp(i\theta))))=\tim\left(i\exp(i\theta)\frac{\eta_A^\prime}{\eta_A}(\exp(i\theta))\right)\\
=\tre\left(\exp(i\theta)\frac{\eta_A^\prime}{\eta_A}(\exp(i\theta))\right).
\end{multline*}

\begin{lemma}
Let $\mu=\exp(i\theta)$ be a zero of $p_A$.  Then $F(\theta)=n/2$.
\end{lemma}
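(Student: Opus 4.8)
The plan is to express $\eta_A$ on the unit circle in terms of the real-valued function $g(\theta):=\Lambda_A(\exp(i\theta))$ and then differentiate its argument. First I would recall the identity obtained in the proof of the preceding lemma: since $z\,h_A'(z)/h_A(z)=-n/2$,
\[
z\Lambda_A'(z)=-\frac{n}{2}\Lambda_A(z)+z\,h_A(z)p_A'(z)=-\frac{n}{2}\Lambda_A(z)+\eta_A(z),
\]
so on $|z|=1$, writing $z=\exp(i\theta)$,
\[
\eta_A(\exp(i\theta))=\exp(i\theta)\Lambda_A'(\exp(i\theta))+\frac{n}{2}\Lambda_A(\exp(i\theta)).
\]
By \eqref{AEq1} the function $g(\theta)=\Lambda_A(\exp(i\theta))$ is real-valued, and the chain rule gives $g'(\theta)=i\exp(i\theta)\Lambda_A'(\exp(i\theta))$; hence $\exp(i\theta)\Lambda_A'(\exp(i\theta))=-ig'(\theta)$ and
\[
\eta_A(\exp(i\theta))=\frac{n}{2}\,g(\theta)-ig'(\theta).
\]

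Next, using $F(\theta)=\frac{d}{d\theta}\arg\eta_A(\exp(i\theta))=\tim\left(\frac{d}{d\theta}\log\eta_A(\exp(i\theta))\right)$, I would differentiate at a value of $\theta$ for which $\mu=\exp(i\theta)$ is a zero of $p_A$; there $g(\theta)=\Lambda_A(\mu)=0$, so
\[
\frac{d}{d\theta}\log\left(\frac{n}{2}\,g(\theta)-ig'(\theta)\right)=\frac{\tfrac{n}{2}g'(\theta)-ig''(\theta)}{-ig'(\theta)}=\frac{in}{2}+\frac{g''(\theta)}{g'(\theta)}.
\]
Since $g$ is real-valued, $g'(\theta)$ and $g''(\theta)$ are real, and taking imaginary parts gives $F(\theta)=n/2$.

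The only point requiring attention is that this manipulation is legitimate, that is, that $\eta_A(\mu)\ne 0$, equivalently $g'(\theta)\ne 0$. This holds because $A$ has distinct eigenvalues, so the zeros of $p_A$ are simple and $p_A'(\mu)\ne 0$; then $\Lambda_A'(\mu)=h_A'(\mu)p_A(\mu)+h_A(\mu)p_A'(\mu)=h_A(\mu)p_A'(\mu)\ne 0$ since $h_A$ has no zeros, whence $g'(\theta)=i\exp(i\theta)\Lambda_A'(\exp(i\theta))\ne 0$. There is no genuine obstacle here: the substance is entirely in the identity $\eta_A(z)=z\Lambda_A'(z)+(n/2)\Lambda_A(z)$ together with the reality of $\Lambda_A$ on $|z|=1$, the matrix analog of Zhang's computation leading to \eqref{Eq:ZLemma3}. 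As a consistency check, under the dictionary $n\approx\log T$ the value $F(\theta)=n/2$ corresponds to $F(\gamma)=\frac12\log(\gamma/2\pi)+O(1/\gamma)$ on the number-theory side.
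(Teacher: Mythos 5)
Your argument is correct and is essentially the paper's: the identity $\eta_A(z)=z\Lambda_A'(z)+\tfrac{n}{2}\Lambda_A(z)$ together with the reality of $g(\theta)=\Lambda_A(e^{i\theta})$ and of its derivatives (which, since $g'=ie^{i\theta}\Lambda_A'$ and $g''=-(z\Lambda_A'+z^2\Lambda_A'')$, is exactly the content of Lemma \ref{ALemma1}) is what the paper uses, merely packaged as the vanishing of $\tre\bigl(\frac{\mu\Lambda_A'(\mu)+\mu^2\Lambda_A''(\mu)}{\mu\Lambda_A'(\mu)}\bigr)$ rather than as $\tim\bigl(\frac{in}{2}+\frac{g''}{g'}\bigr)$. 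Your observation that simplicity of the eigenvalues guarantees $g'(\theta)\ne 0$ is a welcome point the paper leaves implicit.
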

This is Lemma 4.5 in \cite{GeGonek}, and is the matrix analog of \cite[Lemma 3]{Zhang}.
\begin{proof}
By  Lemma \ref{ALemma1}
\begin{multline*}
0=\tre\left(\frac{\mu \Lambda_A^\prime(\mu)+\mu^2\Lambda_A^{\prime\prime}(\mu)}{\mu \Lambda_A^\prime(\mu)}\right)\\
=\tre\left(1+\mu\left(2\frac{h_A^\prime}{h_A}(\mu)+\frac{p_A^{\prime\prime}}{p_A^\prime}(\mu)\right)\right)\\
=\mu\left(\frac{h_A^\prime}{h_A}(\mu)+\frac{\eta_A^\prime}{\eta_A}(\mu)\right).
\end{multline*}
With $h_A^\prime/h_A(\mu)=-n/2\cdot\mu^{-1}$, this says
\[
\frac{n}{2}=\tre\left(\mu\,\frac{\eta_A^\prime}{\eta_A}(\mu)\right).
\]
\end{proof}


\begin{lemma}\label{ALemma4}
Along the unit circle, $\arg(\eta_A(\exp(i\theta)))$ is strictly increasing.  This immediately implies that for consecutive eigenvalues $\exp(i\theta^-)$, $\exp(i\theta^+)$
(in the obvious sense of the counterclockwise orientation of the unit circle), we have
\[
\int_{\theta^-}^{\theta^+}F(\theta)\,d\theta=\pi.
\]
\end{lemma}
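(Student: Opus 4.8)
The plan is to prove $F(\theta)>0$ for every $\theta$; strict monotonicity of $\arg(\eta_A(e^{i\theta}))$ is then immediate, and the integral identity follows from a soft argument about where $\arg(\eta_A)$ lands. The mechanism is to express $\eta_A$ on $|z|=1$ through the single real-valued function $L(\theta):=\Lambda_A(e^{i\theta})$. Starting from the identity $\eta_A(z)=z\Lambda_A'(z)+\tfrac n2\Lambda_A(z)$ recorded in the proof of the lemma asserting $p_A(z)=0\Leftrightarrow\tre(\eta_A(z))=0$, and using that $\Lambda_A$ is real on $|z|=1$ with $iz\Lambda_A'(z)=\frac{d}{d\theta}\Lambda_A(e^{i\theta})=L'(\theta)$ (the computation from the proof of Lemma~\ref{ALemma1}), one obtains
\[
\eta_A(e^{i\theta})=\tfrac n2 L(\theta)-iL'(\theta),
\]
so $\tre(\eta_A)=\tfrac n2 L$, $\tim(\eta_A)=-L'$, and (writing $w(\theta)=\eta_A(e^{i\theta})$ and using $F=\tim(w'\overline w)/|w|^2$) the closed form
\[
F(\theta)=\frac{\tfrac n2\bigl(L'(\theta)^2-L(\theta)L''(\theta)\bigr)}{\tfrac{n^2}{4}L(\theta)^2+L'(\theta)^2}.
\]
The denominator equals $|\eta_A(e^{i\theta})|^2>0$, since $\Lambda_A=h_Ap_A$ has only simple zeros ($A$ has distinct eigenvalues and $h_A$ has no zeros), so $L$ and $L'$ never vanish together. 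Hence $F(\theta)>0$ is equivalent to $L'^2-LL''>0$, and when $L=0$ the formula already returns $F=n/2$, recovering the lemma that $F=n/2$ at a zero of $p_A$.

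Next I would make $L$ explicit. Writing the eigenvalues as $\alpha_j=e^{i\phi_j}$ and factoring $\alpha_j-e^{i\theta}=e^{i(\phi_j+\theta)/2}\,2i\sin\frac{\phi_j-\theta}{2}$, the phases from the $n$ factors cancel those from $h_A(z)=(-z)^{-n/2}\det(A)^{-1/2}$, leaving
\[
L(\theta)=2^n\prod_{j=1}^{n}\sin\!\Bigl(\frac{\phi_j-\theta}{2}\Bigr).
\]
On each arc strictly between consecutive eigenvalue angles $L\neq 0$ and $\frac{d^2}{d\theta^2}\log|L(\theta)|=-\tfrac14\sum_j\csc^2\frac{\phi_j-\theta}{2}<0$; since $(\log|L|)''=(LL''-L'^2)/L^2$, this is precisely $L'^2-LL''>0$ there. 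At an eigenvalue angle $L=0$ while $L'\neq 0$, so $L'^2-LL''=L'^2>0$ too. Thus $F(\theta)>0$ for all $\theta$, and $\arg(\eta_A(e^{i\theta}))$ is strictly increasing.

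For the integral identity, note that $\tre(\eta_A(e^{i\theta}))=\tfrac n2 L(\theta)$ vanishes exactly at the $n$ eigenvalue angles and there only, while $\tim(\eta_A)=-L'\neq 0$ at those points; hence along the circle $\arg(\eta_A(e^{i\theta}))$ lies in $\tfrac\pi2+\pi\mathbb Z$ precisely at the eigenvalue angles. A continuous, strictly increasing function whose values at two consecutive eigenvalue angles $\theta^-,\theta^+$ lie in $\tfrac\pi2+\pi\mathbb Z$, and whose values in between avoid that lattice, must increase by exactly $\pi$; therefore $\int_{\theta^-}^{\theta^+}F(\theta)\,d\theta=\arg(\eta_A(e^{i\theta^+}))-\arg(\eta_A(e^{i\theta^-}))=\pi$.

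The one place needing care is the phase bookkeeping behind $L(\theta)=2^n\prod_j\sin\frac{\phi_j-\theta}{2}$: keeping the branches of $(-z)^{-n/2}$ and $\det(A)^{-1/2}$ mutually consistent, and, for odd $n$, invoking the standing assumption that the branch cut of $\Lambda_A$ does not separate $\theta^-$ from $\theta^+$ (so that $\arg\eta_A$ is single-valued and continuous on $[\theta^-,\theta^+]$). This is routine, and since only $\log|L|$ and its $\theta$-derivatives enter the positivity argument, any residual global sign or phase is immaterial; everything else is elementary algebra plus a monotonicity argument.
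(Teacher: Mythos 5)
Your proof is correct, and it takes a genuinely different route from the paper. The paper works with the logarithmic derivative: it writes $e^{i\theta}\eta_A'/\eta_A(e^{i\theta})=1-n/2+\sum_{k}\bigl(1-e^{-i\theta}\mu_k'\bigr)^{-1}$ as a sum over the \emph{critical points} $\mu_k'$ of $p_A$, regroups it as $1/2+\sum_k\tre\bigl((1-e^{-i\theta}\mu_k')^{-1}-1/2\bigr)$, and invokes Gauss--Lucas to make each summand nonnegative, thereby obtaining the stronger quantitative bound $F(\theta)\ge 1/2$. You instead work with the real restriction $L(\theta)=\Lambda_A(e^{i\theta})$, reduce $F>0$ to $L'^2-LL''>0$, and get that from the strict log-concavity of $\bigl|2^n\prod_j\sin\frac{\phi_j-\theta}{2}\bigr|$ between eigenvalue angles --- i.e.\ you use the zeros of $p_A$ directly and avoid Gauss--Lucas entirely. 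Your computations check out (the identity $\eta_A(e^{i\theta})=\tfrac n2L-iL'$ follows from the displayed identity in the proof of the preceding lemma together with Lemma \ref{ALemma1}, and the phase cancellation giving the sine product is as you say), and you correctly note that simplicity of the eigenvalues keeps the denominator $|\eta_A|^2$ positive; a pleasant byproduct of your argument is that $p_A'$ has no zeros on the unit circle. You also spell out the ``immediately implies'' step for the integral more carefully than the paper does, which is welcome. The one thing your route gives up is the explicit decomposition of $F$ over the critical points $\mu_k'$ and the lower bound $F\ge 1/2$, which the paper reuses verbatim in the proof of Theorem \ref{AThm1}; so while your proof fully establishes this lemma, a reader following your version would still need the paper's representation later.
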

This Lemma is the matrix analog of \cite[Lemma 4]{Zhang}.  
\begin{proof}
We have
\begin{align*}
\exp(i\theta)\frac{\eta_A^\prime}{\eta_A}(\exp(i\theta))=&1-n/2+\exp(i\theta)\frac{p_A^{\prime\prime}}{p_A^\prime}(\exp(i\theta))\\
=&1-n/2+\sum_{k=1}^{n-1}\frac{1}{1-\exp(-i\theta)\mu_j^\prime},
\end{align*}
where $\mu_k^\prime$ denotes the zeros of $p_A^\prime(z)$, $k=1,\ldots n-1$.  Regrouping the $1-n/2$, we consider
\[
1/2+\sum_{j=1}^{n-1}\tre\left(\frac{1}{1-\exp(-i\theta)\mu^\prime_j}-1/2\right).
\]
A little algebra shows that
\[
\tre\left(\frac{1}{1-\exp(-i\theta)\mu^\prime_j}-1/2\right)\ge 0\Leftrightarrow \left|\exp(-i\theta)\mu_j^\prime\right|^2\le 1,
\]
which we know to be the case by the Gauss-Lucas theorem, which states that the zeros of $p_A^\prime(z)$ lie in the convex hull of the zeros of $p_A(z)$.  
\end{proof}
In fact we proved that $F(\theta)\ge 1/2$, which gives us the trivial bound on the gaps between zeros.  However the Lemma is necessary for   Theorems \ref{AThm1} and \ref{AThm2}.

\begin{theorem}\label{AThm1}
Let $\mu^+=\exp(i\theta^+)$, $\mu^-=\exp(i\theta^-)$ be zeros of $p_A(z)$ with 
\[
\theta^+-\theta^-<\frac{2\pi}{1+6n}.
\]
Then the angular sector
\[
S(\theta^+,\theta^-)=\left\{ 1-|z| <2(\theta^+-\theta^-),\,  \theta^-\le\arg(z)\le \theta^+  \right\}
\]
contains a zero of $p_A^\prime(z)$.
\end{theorem}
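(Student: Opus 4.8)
The plan is to mimic the number-theory argument behind Theorem~\ref{Thm:classification}, using the matrix analogs already assembled in this section. The key identity is Lemma~\ref{ALemma4}, which says $\arg(\eta_A(\exp(i\theta)))$ is strictly increasing along the unit circle, with total increment exactly $\pi$ between the consecutive zeros $\exp(i\theta^-)$ and $\exp(i\theta^+)$. So along the short arc of the unit circle between $\mu^-$ and $\mu^+$ the argument of $\eta_A$ sweeps monotonically through a half-turn; in particular there is a unique point on that arc where $\tre(\eta_A(z))=0$. The level curve $\tre(\eta_A(z))=0$ through that point must leave the arc, and the claim is that following it inward (toward the origin) forces it to meet a zero $\mu^\prime$ of $p_A^\prime$ inside the angular sector $S(\theta^+,\theta^-)$.

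First I would set up the boundary of the region $R$ bounded by (i) the arc of $|z|=1$ from $\theta^-$ to $\theta^+$, (ii) the two radial segments $\arg z=\theta^\pm$, $1-2(\theta^+-\theta^-)\le|z|\le 1$, and (iii) the inner arc $|z|=1-2(\theta^+-\theta^-)$. On the outer arc, by Lemma~\ref{ALemma4}, the curve $\tre(\eta_A)=0$ enters $R$ exactly once (with $\tim(\eta_A)$ changing sign there, i.e.\ a green and a purple segment meet the boundary). The strategy is to show the level curve cannot exit through any other part of $\partial R$ unless it first hits a zero of $p_A^\prime$ inside $R$. On the radial segments and the inner arc one wants a one-sided control statement: $\tre(\exp(i\psi)\,\eta_A^\prime/\eta_A)$ has a definite sign, or equivalently $\arg(\eta_A)$ is monotone in the appropriate direction, so the orientation of the level curve cannot reverse there either. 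The needed estimate comes from the partial-fraction expansion
\[
\exp(i\theta)\frac{\eta_A^\prime}{\eta_A}(\exp(i\theta))=1-\frac n2+\sum_{k=1}^{n-1}\frac{1}{1-\exp(-i\theta)\mu_k^\prime},
\]
extended off the unit circle: writing $z=r\exp(i\psi)$, each summand $1/(1-z^{-1}\mu_k^\prime)$ is controlled crudely by $|\mu_k^\prime|\le 1$ (Gauss--Lucas) together with $r\ge 1-2(\theta^+-\theta^-)$, and the hypothesis $\theta^+-\theta^-<2\pi/(1+6n)$ is exactly what makes the $n$ error terms small enough relative to the main term $n/2$ to keep the relevant real part bounded away from zero. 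This is the analog of Lemma~\ref{Lemma:ZhangGe3} (control on a line away from the critical line) and of the Ge-type box result quoted near (\ref{Eq:last}).

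With those sign statements in hand, an argument-principle / winding count on $\partial R$ finishes it: the number of zeros of $p_A^\prime$ inside $R$ (there are no zeros of $p_A$ there, and $h_A$ and the factor $z$ are nonvanishing in $R$, so zeros of $\eta_A$ in $R$ are precisely zeros of $p_A^\prime$) equals $(1/2\pi)$ times the change in $\arg(\eta_A)$ around $\partial R$, and the monotonicity facts force that winding number to be at least $1$. Alternatively, and perhaps more cleanly in the spirit of the paper, one argues by continuity of the level curves: the $\tre(\eta_A)=0$ curve entering through the outer arc must terminate (it cannot close up into a loop by the strict monotonicity on the unit circle, and it cannot escape through the radial or inner boundary by the sign estimates), and a terminating level curve of $\tre(\eta_A)=0$ can only end at a zero of $\eta_A$, hence at a $\mu^\prime$, which therefore lies in $R\subset S(\theta^+,\theta^-)$. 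I would also note Hypothesis~D has a trivial analog here: for a fixed generic matrix $A$ the finitely many zeros of $\eta_A^\prime$ generically avoid $\arg(\eta_A)=\pm\pi/2$, so the level curves are smooth and do not branch.

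The main obstacle I anticipate is the quantitative sign estimate on the inner arc $|z|=1-2(\theta^+-\theta^-)$ and the radial walls. On the unit circle the key inequality $\tre(1/(1-\exp(-i\theta)\mu_k^\prime)-1/2)\ge 0\Leftrightarrow|\mu_k^\prime|\le 1$ is exact; once $|z|<1$ this equivalence degrades, and one must show the total error $\sum_k\tre(1/(1-z^{-1}\mu_k^\prime)-1/2)$ stays $>-1/2$ (so that the full expression $1/2+\sum_k(\cdots)$ remains positive and $\arg\eta_A$ stays monotone) for all $z$ in the shell $1-2(\theta^+-\theta^-)\le|z|\le1$. Getting the constant $2\pi/(1+6n)$ to come out exactly — rather than some worse $n$-dependence — will require being careful about whether the $\mu_k^\prime$ can cluster near the relevant arc, and this is where the bulk of the computation (and the choice of the shell radius $1-2(\theta^+-\theta^-)$ rather than something smaller) lives. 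The radial-wall estimate should be easier, since there $\arg z$ is held fixed at an endpoint $\theta^\pm$ and one only tracks the $r$-derivative of $\arg\eta_A$, which by Cauchy--Riemann is $\pm$ the $\psi$-derivative of $\log|\eta_A|$, again controlled by the same partial-fraction sum.
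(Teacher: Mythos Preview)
Your approach is quite different from the paper's, and more complicated than necessary. The paper's proof never leaves the unit circle: it is a pure integral-contradiction argument on $F(\theta)$ alone. Assume for contradiction that no $\mu'$ lies in $S(\theta^+,\theta^-)$. Write $F(\theta)=\tfrac12+\sum_{\mu'}\frac{1-|\mu'|^2}{2|\exp(i\theta)-\mu'|^2}$ and split the sum at $|\mu'|=1-d$, where $d=2(\theta^+-\theta^-)$. For the far terms ($|\mu'|\le 1-d$) a triangle-inequality comparison of denominators gives $F_{12}(\theta)\le 4F_{12}(\theta^+)\le 4F(\theta^+)=2n$. The near terms ($|\mu'|>1-d$), under the no-zero hypothesis, split according to whether $\arg\mu'$ lies above $\theta^+$ or below $\theta^-$; each piece is monotone in $\theta$ on the arc, so $F_{11}(\theta)\le F(\theta^+)+F(\theta^-)=n$. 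Hence $F(\theta)\le \tfrac12+3n$ throughout, and $\pi=\int_{\theta^-}^{\theta^+}F\le(\tfrac12+3n)(\theta^+-\theta^-)$ contradicts the gap hypothesis. The constants $1+6n$ and the shell depth $2(\theta^+-\theta^-)$ fall out directly from this bound; no off-circle estimates are ever needed.

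Your level-curve/winding route might be salvageable, but it forces you to control $\arg\eta_A$ on the inner arc $|z|=1-d$ and the radial walls, precisely where the clean equivalence $\tre\bigl(\tfrac{1}{1-\exp(-i\theta)\mu'}-\tfrac12\bigr)\ge 0\Leftrightarrow|\mu'|\le 1$ breaks down. You correctly flag this as the main obstacle; the point is that the paper simply sidesteps it by working only with $F$ on $|z|=1$ and using the contradiction hypothesis to sort the near $\mu'$ by angle. There is also a slip in your setup: on the open arc between $\mu^-$ and $\mu^+$ there is \emph{no} point with $\tre(\eta_A)=0$. By the matrix analog of Zhang's Lemma~1, $\tre(\eta_A)=0$ on $|z|=1$ exactly at the zeros of $p_A$, so only at the endpoints $\mu^\pm$ themselves; the unique interior point where the argument passes through $0$ or $\pi$ has $\tim(\eta_A)=0$, not $\tre(\eta_A)=0$.
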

This is the matrix analog of \cite[Lemma 11]{FanGe2}.
\begin{proof}
Via some more algebra, note that
\[
\tre\left(\frac{1}{1-\exp(-i\theta)\mu^\prime}-1/2\right)=\frac{1-|\mu^\prime|^2}{2|1-\exp(-i\theta)\mu^\prime|^2}.
\]
With $d=2(\theta^+-\theta^-)$, write $F(\theta)=1/2+F_{11}(\theta)+F_{12}(\theta)$, where
\begin{align*}
F_{11}(\theta)=&\sum_{1-d<|\mu^\prime|}\frac{1-|\mu^\prime|^2}{2|1-\exp(-i\theta)\mu^\prime|^2}\\
F_{12}(\theta)=&\sum_{|\mu^\prime|\le 1-d}\frac{1-|\mu^\prime|^2}{2|1-\exp(-i\theta)\mu^\prime|^2}.
\end{align*}
(The notation here and below is meant to highlight the analogy with the proof of \cite[Lemma 11]{FanGe2}.)\ \ 
For $|\mu^\prime|\le 1-d$ and $\theta^-<\theta<\theta^+$, we have that
\begin{multline*}
\left|\exp(i\theta)-\mu^\prime\right|\ge \left|\exp(i\theta^+)-\mu^\prime\right|-\left|\exp(i\theta)-\exp(i\theta^+)\right|\\
\ge  \frac{1}{2}\left|\exp(i\theta^+)-\mu^\prime\right| +(\theta^+-\theta^-)-\left|\exp(i\theta)-\exp(i\theta^+)\right|\\
\ge \frac{1}{2}\left|\exp(i\theta^+)-\mu^\prime\right|.
\end{multline*}
Thus 
\[
F_{12}(\theta)\le 4F_{12}(\theta^+)<4F(\theta^+)=2n.
\]
Now suppose there is no zero of $p_A^\prime(z)$ in the angular sector $S(\theta^+,\theta^-)$.  Let $\delta=(\theta^-+\theta^+)/2$.  Then we may write $F_{11}(\theta)=f(\theta)+g(\theta)$, where 
\begin{align*}
f(\theta)=&\sum_{\substack{1-d<|\mu^\prime|\\\theta^+<\arg(\mu^\prime)<\pi+\delta}} \frac{1-|\mu^\prime|^2}{2|1-\exp(-i\theta)\mu^\prime|^2},\\
g(\theta)=&\sum_{\substack{1-d<|\mu^\prime|\\\delta-\pi<\arg(\mu^\prime)<\theta^-}} \frac{1-|\mu^\prime|^2}{2|1-\exp(-i\theta)\mu^\prime|^2}.
\end{align*}
For $\theta^-\le\theta\le\theta^+$ we have $f(\theta)\le f(\theta^+)$ and $g(\theta)\le g(\theta^-)$.  So
\[
F_{11}(\theta)\le f(\theta^+)+g(\theta^-)\le F(\theta^+)+F(\theta^-)=n.
\]
Now
\[
\pi=\int_{\theta^-}^{\theta^+}F(\theta)\, d\theta\le\int_{\theta^-}^{\theta^+} 1/2+3n\, d\theta=(1/2+3n)(\theta^+-\theta^-),
\]
which is a contradiction.
\end{proof}

We next turn to a classification of the zeros of $p_A(z)$ and $p_A^\prime(z)$ based on the level curves.

\subsection{Classification of zeros of $p_A(z)$ and $p_A^\prime(z)$.}\label{AS2}

\subsubsection*{Zeros of $p_A^\prime(z)$}
\begin{itemize}
\item[Type 0:] We will say a zero $\mu^\prime$ of $p_A^\prime(z)$ is of \textsc{type 0} if \emph{neither} of the two level curves $\tre(\eta_A(z))=0, \tim(\eta_A(z))>0$ and $\tre(\eta_A(z))=0, \tim(\eta_A(z))<0$ exiting $\mu^\prime$ cross the unit circle.
\item[Type 1:] We will say a zero $\mu^\prime$ of $p_A^\prime(z)$ is of \textsc{type 1} if \emph{exactly one} of the two level curves $\tre(\eta_A(z))=0, \tim(\eta_A(z))>0$ and $\tre(\eta_A(z))=0, \tim(\eta_A(z))<0$ exiting $\mu^\prime$ crosses the unit circle.
\item[Type 2:] We will say a zero $\mu^\prime$ of $p_A^\prime(z)$ is of \textsc{type 2} if the level curves $\tre(\eta_A(z))=0, \tim(\eta_A(z))>0$ and $\tre(\eta_A(z))=0, \tim(\eta_A(z))<0$ exiting $\mu^\prime$ \emph{both} cross the unit circle.
\end{itemize}

\subsubsection*{Zeros of $p_A(z)$}
\begin{itemize}
\item[Type 1:] We will say a zero $\mu$ of $p_A(z)$ is of \textsc{type 1} if the level curve $\tre(\eta_A(z))=0$ on which it lies, terminates in a zero $\mu^\prime$ which is of type 1.
\item[Type 2:] We will say a zero $\mu$ of $p_A(z)$ is of \textsc{type 2} if the level curve $\tre(\eta_A(z))=0$ on which it lies, terminates in a zero $\mu^\prime$ which is of type 2.
\end{itemize}

In the example of Figure \ref{AF:2} we see six zeros of $p_A^\prime(z)$ of type 0, eight of type 1, and seven of type 2.
\noindent Let
\[
N_1(A)=\sharp\left\{\text{type 1 zeros of } p_A(z)\right\}.
\]
Let
\[
N_2(A)=
\sharp\left\{\text{pairs of type 2 zeros of } p_A(z)\right\}.
\]
For $j=0,1,2$, let
\[
N_j^\prime(A)=\sharp\left\{\text{zeros }\mu^\prime\text{of }p_A^\prime(z)\text{ of type }j\right\}.
\]

\begin{theorem}\label{AThm2}
Every zero of $p_A(z)$ is of either type 1 or type 2.  Thus we have a canonical mapping from the zeros of $p_A(z)$ to those of $p_A^\prime(z)$, which is two to one on the type 2 zeros, and one to one on the type 1 zeros.  Zeros of $p_A^\prime(z)$ of type 0 are precisely those not in the image of this mapping.  The zeros of $p_A(z)$ type 2 are canonically grouped in pairs.

There is always at least one pair of type 2 zeros of $p_A(z)$, and in fact
\begin{equation}\label{AEq:1}
N_2(A)-N_0^\prime(A)=1,
\end{equation}
while
\begin{equation}\label{AEq:2prime}
N_1(A)+2N_0^\prime(A)=n-2.
\end{equation}
\end{theorem}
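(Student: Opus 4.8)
The plan is to prove Theorem \ref{AThm2} by mimicking the proof of Theorem \ref{Thm:classification}, replacing the critical line by the unit circle and the analytic estimates on the lines $\tre(s)=1/2$ and $\tre(s)=4$ by the corresponding statements for $\eta_A(z)$ on and off the unit circle. The first task is to establish that every zero $\mu$ of $p_A(z)$ is of type 1 or type 2, equivalently that the level curve $\tre(\eta_A(z))=0$ through $\mu$, when followed into the interior $|z|<1$, terminates in a zero $\mu^\prime$ of $p_A^\prime(z)$ rather than returning to the unit circle or escaping to $z=0$. Monotonicity of $\arg(\eta_A(\exp(i\theta)))$ along the unit circle (Lemma \ref{ALemma4}) rules out the contour looping back to $|z|=1$, exactly as Lemma \ref{Lemma:ZhangGe} does in the $\zeta$ case. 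To rule out the contour spiralling into the origin one uses the explicit behaviour of $\eta_A$ near $z=0$: since $p_A^\prime(z)$ has degree $n-1$ with nonzero constant term $p_A^\prime(0)=-\sum_j\prod_{k\ne j}\mu_k\ne 0$ (the eigenvalues are distinct and nonzero), while $h_A(z)\sim(-z)^{-n/2}\det(A)^{-1/2}$, we get $\eta_A(z)=z h_A(z)p_A^\prime(z)\sim c\,(-z)^{1-n/2}$ as $z\to 0$, so $\arg(\eta_A(z))$ winds monotonically as $z$ circles the origin and the level set $\tre(\eta_A)=0$ near the origin consists of $|n-2|$ radial-looking arcs along which $\arg(\eta_A)$ is \emph{not} stationary; a small-circle argument shows the orientation of the contour is forced, so it cannot be the terminus of a contour that crossed the unit circle with the opposite orientation. (Here is where Hypothesis D — or rather its matrix analog, which should be stated or noted as generically true — is invoked to prevent branching.) The remaining bijection statements — two-to-one on type 2, one-to-one on type 1, type 0 exactly the zeros of $p_A^\prime$ not hit — are then formal, just as in the proof of Theorem \ref{Thm:classification}: a contour with $\arg(\eta_A)=\pm\pi/2$ is the preimage of a ray, hence connects a zero of $\eta_A$ (on $|z|=1$ these are the zeros of $p_A$) to another zero or to the distinguished behaviour at $0$ and $\infty$, and it cannot connect two unit-circle zeros to each other for orientation reasons.

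For the counting identities, the strategy is a direct combinatorial count in place of the density theorems used over (\ref{Eq:1})--(\ref{Eq:2}). There are exactly $n$ zeros of $p_A(z)$ on the unit circle and exactly $n-1$ zeros of $p_A^\prime(z)$, all in $|z|\le 1$ by Gauss--Lucas. Each zero of $p_A$ lies on a unique level curve $\tre(\eta_A)=0$ (the one through it), and since $F(\theta)=d\arg(\eta_A)/d\theta>0$ with total increment $\pi$ between consecutive zeros (Lemma \ref{ALemma4}), the $n$ unit-circle zeros are alternately attached to a $\tim(\eta_A)>0$ arc and a $\tim(\eta_A)<0$ arc, and the contour leaves each zero transversally into $|z|<1$. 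Thus the $n$ zeros of $p_A$ contribute $n$ arc-ends entering the disk; by the first part each such arc-end terminates at a zero of $p_A^\prime$, a type 1 zero absorbing one arc-end and a type 2 zero absorbing two. Counting arc-ends gives $N_1(A)+2N_2(A)=n$ — more precisely $N_1(A)+2N_2^{(\mu)}(A)=n$ where $N_2^{(\mu)}$ counts type 2 \emph{zeros} of $p_A$, and since those are canonically paired, $N_2^{(\mu)}(A)=2N_2(A)$; I will need to be careful that the definition $N_2(A)=\sharp\{\text{pairs}\}$ means $N_1(A)+4N_2(A)=n$... wait — re-examining: each type 2 zero $\mu^\prime$ of $p_A^\prime$ receives two arc-ends, and these come from two \emph{distinct} unit-circle zeros (the pair), so the number of arc-ends is $N_1(A)\cdot 1 + N_2^\prime(A)\cdot 2 = n$, and the pairing says the $N_2^\prime(A)$ type-2 $\mu^\prime$'s correspond to $N_2(A)=N_2^\prime(A)$ pairs, hence $2N_2(A)$ unit-circle zeros; so $N_1(A)+2N_2(A)=n$. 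Combining with the count of all zeros of $p_A^\prime$, namely $N_0^\prime(A)+N_1^\prime(A)+N_2^\prime(A)=n-1$ together with $N_1^\prime(A)=N_1(A)$ and $N_2^\prime(A)=N_2(A)$, gives $N_0^\prime(A)+N_1(A)+N_2(A)=n-1$. Subtracting this from $N_1(A)+2N_2(A)=n$ yields $N_2(A)-N_0^\prime(A)=1$, which is (\ref{AEq:1}); eliminating $N_2(A)$ instead gives $N_1(A)+2N_0^\prime(A)=2(n-1)-n=n-2$, which is (\ref{AEq:2prime}). In particular $N_2(A)=N_0^\prime(A)+1\ge 1$, so there is always at least one pair of type 2 zeros.

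The main obstacle is the first part: rigorously showing that a level curve crossing the unit circle into $|z|<1$ must terminate at a zero of $p_A^\prime(z)$ and cannot instead wind back out to $|z|=1$ or into $z=0$. The $\zeta$-function argument played the monotonicity of $\arg\eta$ on the critical line against its (eventual) monotonicity on a vertical line $\tre(s)=4$ to pin down orientations; in the matrix setting the analog of the far line is a small circle around the origin, and one must check that on such a circle $\arg(\eta_A(z))$ is monotone in the appropriate sense (i.e. that the winding coming from the $(-z)^{1-n/2}$ factor dominates the bounded contribution of $p_A^\prime(z)$ near $0$ — true for $|z|$ small enough when $n\ge 3$, since $n-2\ge 1$), and handle the intermediate annulus where monotonicity can fail, possibly by a topological/degree argument counting preimages of a ray rather than by explicit estimates. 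Making this clean — and checking the $n=3$ boundary case and the branch-cut bookkeeping for odd $n$ — is where the real work lies; the counting in the second and third paragraphs is then bookkeeping.
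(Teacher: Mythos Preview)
Your proposal is correct and follows essentially the same route as the paper: Lemma~\ref{ALemma4} is used to rule out the level curve returning to the unit circle, the behaviour of the factor $z(-z)^{-n/2}$ near the origin gives the opposite orientation there so the curve must terminate at a zero of $p_A^\prime$, and the counting argument is identical (the paper states $N_1(A)+2N_2(A)=n$ and $N_0^\prime(A)+N_1(A)+N_2(A)=n-1$ and subtracts, exactly as you do after your self-correction).

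One small slip worth flagging: your justification that $p_A^\prime(0)\ne 0$ ``because the eigenvalues are distinct and nonzero'' is not valid.  In fact $p_A^\prime(0)=-\det(A)\,\overline{\operatorname{tr}(A)}$, which vanishes precisely when $\operatorname{tr}(A)=0$; distinctness of the eigenvalues does not prevent this.  This does not really damage the argument, since the paper (implicitly) works under the generic hypothesis analogous to Hypothesis~D, and for a random $A$ from CUE the event $\operatorname{tr}(A)=0$ has measure zero; moreover even if $p_A^\prime$ vanished to some order $m$ at $0$, the leading exponent $1+m-n/2$ would still be nonpositive for $n\ge 3$ unless $m$ were large, so the winding near the origin remains in the correct sense generically.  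The paper glosses over this point with the phrase ``because of the $z(-z)^{-n/2}$ term'', so you are not missing anything the paper supplies.
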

\begin{proof}
Regarding the mapping, all this is clear except the first statement, which says the contours which cross the unit circle must terminate in exactly one zero of $p_A^\prime(z)$.  The alternatives we must rule out is continuation of the contour on to the center, or looping back to the unit circle.  

For the first possibility, note that the contour $\arg(\eta_A(z))=\pi/2$ (resp.\ $\arg(\eta_A(z))=-\pi/2$) does not exist in isolation; it is part of a continuum which deform smoothly as the argument is varied.  But the argument of $\eta_A(z)$ is increasing around the unit circle by  Lemma \ref{ALemma4}, while decreasing near the origin because of the $z(-z)^{-n/2}$ term.  The contours can only cross over each other where the argument of $\eta_A(z)$ is undefined, at a zero $\mu^\prime$.

The second possibility is also ruled out by  Lemma \ref{ALemma4}, which says that the argument of $\eta_A(z)$ increases monotonically as one goes around the unit circle.

We have  
\begin{equation}\label{AEq:3}
N_1(A)+2N_2(A)=n.
\end{equation}
Via the mapping, $N_1(A)=N_1^\prime(A)$ and $N_2(A)=N_2^\prime(A)$.
Thus we have
\begin{equation}\label{AEq:4}
N_0^\prime(A)+N_1(A)+N_2(A)=n-1.
\end{equation}
Subtracting (\ref{AEq:4}) from  (\ref{AEq:3}) gives (\ref{AEq:1}).
Subtracting (\ref{AEq:3}) from twice (\ref{AEq:4}) gives (\ref{AEq:2prime}).
\end{proof}

In \emph{Mathematica}, we computed and classified zeros of $p_A^\prime(z)$ for 50,000 matrices $A$ with $n=22$ chosen from the Circular Unitary Ensemble.
We found 22.5\% of zeros were of type 0, 50.3\% of zeros were of type 1, and 27.2\% of zeros were of type 2.

\begin{figure}
\begin{center}
\includegraphics[scale=1.3, viewport=0 0 310 180,clip]{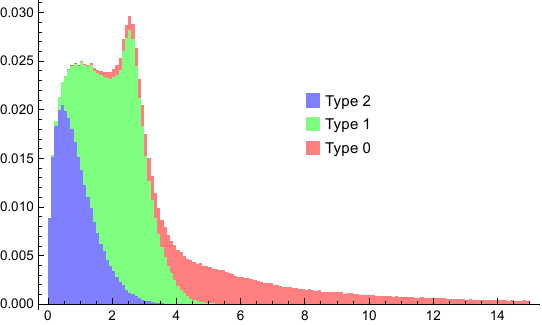}
\caption{$n(1-|\mu^\prime|)$ for 50,000 matrices, $n=22$.}\label{AF:3}
\end{center}
\end{figure}
In Figure \ref{AF:3} we show the histogram of 
$
n\cdot(1-|\mu^\prime|)
$
for zeros of type 0, 1, and 2 separately, for the $\sim 10^6$ zeros $\mu^\prime$ computed.   This is analogous to the data in Figure 2 in \cite{Duenez}, now separated into types, and is the matrix analog of the data in Figure \ref{F:3} above.    Again, the histograms analogous to Figure \ref{AF:3} for the three types separately each show only a single peak; it is the interplay between them that causes the second bump.

The type 2 zeros of $p_A^\prime(z)$ are closer to the unit circle than average: the median value of $n(1-\mu^\prime)$ in the data is $0.78$.  The other two quartiles are $0.43$ and $1.26$.  In contrast, the median for the type 0 zeros is $5.81$.

Corresponding to the type 2 zeros of $p_A^\prime(z)$ in the numerical data, we have the canonically associated pairs of type 2 zeros 
\[
\{\exp(i\theta^-),\exp(i\theta^+)\}
\]
of $p_A(z)$.
The histogram of the normalized gaps 
\[
\frac{n(\theta^+-\theta^-)}{2\pi}
\]
is indistinguishable from the analogous histogram for type two zeta zeros (i.e.\ Figure \ref{F:4}); 
99.98\% are less than the average of 1 and 41.3\% are less than half the average.  


\subsubsection*{Acknowledgments}  We would like to thank both Rick Farr for sharing his computation of zeros of $\zeta^\prime(s)$ in the range $t<1000$, and David Farmer for sharing his computations in the range $10^6\le t\le 10^6+6\cdot 10^4$.  Thanks also to Fan Ge for suggesting a simpler argument with less explicit computation for Lemma \ref{Lemma:ZhangGe}, and helpful comments on the manuscript.  
And thanks to Cem  Y{\i}ld{\i}r{\i}m  for a very careful reading of the manuscript, and numerous helpful suggestions which greatly improved the exposition.

\end{document}